\numberwithin{equation}{section}
\newtheorem{theorem}{Theorem}[section]
\newtheorem{lemma}{Lemma}[section]
\newtheorem{proposition}{Proposition}[section]
\newtheorem{remark}{Remark}[section]
\newtheorem{definition}{Definition}[section]
\begin{document}

\begin{frontmatter}



\title{Infinitely many periodic solutions for a semilinear wave equation with $x$-dependent coefficients
}

\author{Hui Wei}
\ead{weihui01@163.com}
\author{Shuguan Ji\corref{cor}}
\ead{jishuguan@hotmail.com}
\address{School of Mathematics and Statistics and Center for Mathematics and Interdisciplinary Sciences, Northeast Normal University, Changchun 130024, P.R. China}
\cortext[cor]{Corresponding author.}

\begin{abstract}
This paper is devoted to the study of periodic (in time) solutions to an one-dimensional semilinear wave equation with $x$-dependent coefficients under various homogeneous boundary conditions. Such a model arises from the forced vibrations
of a nonhomogeneous string and propagation of seismic waves in nonisotropic media. By combining variational methods with an approximation argument, we prove that there exist infinitely many periodic solutions whenever the period is a rational multiple of the length of the spatial interval. The proof is essentially based on the spectral properties of the wave operator with $x$-dependent coefficients.
\end{abstract}

\begin{keyword}
$\mathbb{Z}_2$-index,  infinitely many periodic solutions, wave equation
\end{keyword}

\end{frontmatter}

\section{Introduction}

In this paper, we concern with the existence of infinitely many periodic solutions to the wave equation with $x$-dependent coefficients
\begin{equation}\label{eqa:1.1}
 \rho(x) u_{tt} -  (\rho(x)u_x)_x = \mu\rho(x) u + \rho(x)|u|^{p-1}u,  \ \ t\in \mathbb{R}, \ \ 0<x<\pi,
\end{equation}
with the boundary conditions
\begin{equation}\label{eqa:1.2}
a_1u(t, 0)+b_1 u_x(t, 0)=0, \ \ a_2u(t, \pi)+ b_2 u_x(t, \pi) = 0, \, t\in \mathbb{R},
\end{equation}
and the periodic conditions
\begin{equation}\label{eqa:1.3}
u(t+T,x) = u(t,x), \ u_t(t+T,x) = u_t(t,x),  \ \ t\in \mathbb{R}, \ \ 0<x<\pi.
\end{equation}
Here $a^2_i + b^2_i \neq 0$ for $i=1, 2$, $0<p<1$ and $\mu>0$  is a constant, and $T$ is a rational multiple of $\pi$. For convenience, we write
$$T = 2\pi \frac{a}{b},$$
where $a$, $b$ are relatively prime positive integers.

As stated in \cite{Barbu.(1996), Barbu.(1997)a, Barbu.(1997)b, Ji.(2008), Ji.(2009), Ji.(2016), Ji.(2006), Ji.(2007), Ji.(2011), Ru04, Ru17}, equation (\ref{eqa:1.1}) is a mathematical model to account for the forced vibrations of a bounded nonhomogeneous string and the propagation of seismic waves in nonisotropic media.
More precisely, the vertical displacement $u(t,z)$ at time $t$ and depth $z$ of a plane seismic wave is described by the equation
\begin{equation}
\label{eqa:1.4}
\omega(z) u_{tt} -  (\nu(z)u_z)_z = 0
\end{equation}
with some initial conditions in $t$ and boundary conditions in $z$, where $\omega(z)$ is the rock density and $\nu(z)$ is the elasticity coefficient.
By the change of variable
$$x =\int_0^z \left(\frac{\omega(s)}{\nu(s)}\right)^{1/2} \textrm{d}s,$$
equation (\ref{eqa:1.4}) is transformed into
$$\rho(x) u_{tt} -  (\rho(x)u_x)_x=0,$$
where $\rho=(\omega\nu)^{1/2}$ denotes the acoustic impedance function.

It is well known that when $\rho(x)$ is a non-zero constant, it corresponds to the constant coefficients wave equation.
A great deal of attention has been paid to find periodic solutions of nonlinear wave equations with constant coefficients since 1960s. The related results are \cite{Amann.(1979), B.(1983), B.(1978), Chang.(1981), Craig.(1993), Chen.(2017), R.(1967), R.(1978), R.(1984), T.(2006)} and the references therein. In recent decades, the problem of finding infinitely many periodic solutions of wave equations with constant coefficients has captured much research interest. In \cite{T.1985}, with the aid of variational methods, Tanaka proved that there exist infinitely many periodic solutions for the nonhomogeneous one-dimensional wave equation with the nonlinearity $|u|^{p-1}u $, where $1<p<1+\sqrt{2}$. Later, he \cite{T.1988} extended the result to the case $p>1$ by a delicate calculation. Employing variational methods together with an approximation argument, Ding and Li \cite{D.1998} proved that the homogeneous one-dimensional wave equation with the general nonlinearity which is odd and nondecreasing, and super- or sub-linearities possesses infinitely many periodic solutions. For higher dimensional case, Chen and Zhang \cite{Chen.(2014), Chen.(2016)} considered the wave equation with the nonlinearity $|u|^{p-1}u$ in a ball in $\mathbb{R}^N$ and obtained the existence of infinitely many periodic solutions by using variational methods and an approximation argument, where $0<p<1$. These results are essentially based on the properties of the spectrum of the wave operator.

On the other hand, Barbu and Pavel \cite{Barbu.(1996), Barbu.(1997)a, Barbu.(1997)b} first studied the problem of finding periodic solutions to the nonlinear wave equations with $x$-dependent coefficients. For the nonlinear term satisfying Lipschitz continuous and sub-linear growth, Barbu and Pavel \cite{Barbu.(1997)a} proved the existence and regularity of periodic solutions. Rudakov \cite{Ru04} considered the existence of periodic solutions for such wave equation with power-law growth nonlinearity. Ji and Li \cite{Ji.(2011)} obtained an existence result of periodic solution for $\eta_{\rho}(x)=0$, which actually solves an open problem posted in \cite{Barbu.(1997)a}. Chen \cite{C.2015} applied a global inverse function theorem (see \cite{P.1974}) to prove the existence and uniqueness of periodic solutions for a system of nonlinear wave equation with $x$-dependent coefficients. These papers deal with the problem under the Dirichlet boundary conditions. Compared with the above works, Ji and his collaborators acquired some related results for the general Sturm-Liouville
boundary value problem \cite{Ji.(2008), Ji.(2006)}, and periodic and anti-periodic boundary value problem \cite{Ji.(2009), Ji.(2007)}.
In \cite{W.2009}, by using topological degree methods, Wang and An obtained an existence result on periodic solution of the problem with resonance and the sub-linear nonlinearity under Dirichlet-Neumann boundary conditions.
Recently, with the help of the Leray-Schauder degree theory, Ji et al. \cite{Ji.(2016)} obtained the existence and multiplicity of periodic solutions under the Dirichlet-Neumann boundary conditions. The restriction to such type of boundary value problem can guarantee the compactness of the inverse operator on its range.

In comparison with these works, the aim of this paper is devoted to the existence of infinitely many periodic solutions for the problem \eqref{eqa:1.1}--\eqref{eqa:1.3}.
In order to solve this problem, we construct suitable function space which is called working space here. By combining the $\mathbb{Z}_2$-index theory with  minimax method for even functional, we first work out the problem on some given  subspaces of working space. Then we obtain infinitely many periodic solutions of the problem \eqref{eqa:1.1}--\eqref{eqa:1.3} by a limiting process. Our method is based on a delicate analysis for the asymptotic character of
the spectrum of the wave operator with $x$-dependent coefficients under various homogeneous boundary conditions, and the spectral properties play an essential role in the proof.

In this paper, we make the following assumption:

(A1) $\rho(x) \in H^2(0, \pi)$ satisfies for any $x\in[0,\pi]$,  $0<\rho(x)\leq\beta_0$ and
$$\rho_0 = \textrm{ess}\inf \eta_{\rho}(x) >0,$$
where
$$\eta_{\rho}(x) = \frac{1}{2} \frac{\rho''}{\rho}-\frac{1}{4} \left(\frac{\rho'}{\rho}\right)^2.$$

(A2) Set $\alpha_1=a_1-\frac{b_1}{2}(\frac{\rho'(0)}{\rho(0)})$, $\alpha_2=a_2-\frac{b_2}{2}(\frac{\rho'(\pi)}{\rho(\pi)})$, $\beta_1=-b_1$, $\beta_2=b_2$, and satisfy
$$\alpha_i\geq0, \ \beta_i \geq 0  \ {\rm and} \  \alpha_i +\beta_i>0, \ {\rm for} \ i=1, 2.$$

The rest of this paper is organized as follows. In Sect. \ref{sec:2}, we give some notations and preliminaries such as the definition of weak solution of problem \eqref{eqa:1.1}--\eqref{eqa:1.3} and the asymptotic character of the spectrum of the wave operator with $x$-dependent coefficients under various homogeneous boundary conditions, and the definition of working space. Meanwhile, we characterize the solutions of problem \eqref{eqa:1.1}--\eqref{eqa:1.3} as the critical points of the corresponding variational problem, and state the main results. In Sect. \ref{sec:3}, we prove the bounds of the corresponding functional on some spherical surfaces and some subspaces. In Sect. \ref{sec:4}, with the aid of $\mathbb{Z}_2$-index theory and minimax method, we obtain a sequence of the critical points for the corresponding functional restricted on some given subspaces. In Sect. \ref{sec:5}, by an approximation argument, we prove the main result.
\section{Preliminaries and main results}
\setcounter{equation}{0}
\label{sec:2}

Set $\Omega = (0, T) \times (0, \pi)$, and denote
\begin{eqnarray*}
\Psi = \{\psi \in C^\infty(\Omega) : a_1\psi(t, 0)+b_1 \psi_x(t, 0)=0, \ a_2\psi(t, \pi)+ b_2 \psi_x(t, \pi) = 0, \\
\psi(0,x) = \psi(T,x), \psi_t(0,x) = \psi_t(T,x)\},
\end{eqnarray*}
and
$$L^r(\Omega) = \Big\{ u: \|u\|^r_{L^r(\Omega)} = \int_\Omega |u(t,x)|^r \rho(x) \textrm{d}t \textrm{d}x <\infty\Big\}, \, \, r\geq 1.$$
It is well known that $\Psi$ is dense in $L^r(\Omega)$ for any $r\geq 1$, and $L^2(\Omega)$ is a Hilbert space with the inner product
$$\langle u, v \rangle = \int_\Omega u(t,x)  \overline{v(t,x)} \rho(x)\textrm{d}t \textrm{d}x, \ \forall u, v \in L^2(\Omega).$$

We rewrite (\ref{eqa:1.1})--(\ref{eqa:1.3}) on $\Omega$ in the
 following form
 \begin{eqnarray}
\label{eqa:2-a}
 && \rho(x)u_{tt}-(\rho(x)u_{x})_{x}=\mu\rho(x) u +\rho(x)|u|^{p-1}u, \
 (t,x)\in \Omega,\\
\label{eqa:2-b}
&&a_1u(t, 0)+b_1 u_x(t, 0)=0, \ a_2u(t, \pi)+ b_2 u_x(t, \pi) = 0, \ t\in(0,T), \\
\label{eqa:2-c}
 &&u(0,x)=u(T,x),\ u_{t}(0,x)=u_{t}(T,x), \
x\in(0,\pi).
 \end{eqnarray}
\begin{definition}
A function $u \in L^r(\Omega)$ is said to be a weak solution to problem \eqref{eqa:2-a}--\eqref{eqa:2-c} if
$$\int_\Omega u(\rho\psi_{tt} - (\rho\psi_x)_x){\rm d}t \textrm{\rm d}x - \int_\Omega (\mu u + |u|^{p-1}u)\psi \rho\textrm{\rm d}t \textrm{\rm d}x = 0,\,\, \forall \psi \in \Psi. $$
\end{definition}

In order to look for the periodic solutions of problem (\ref{eqa:2-a})--(\ref{eqa:2-c}), we need to use the following
complete orthonormal system of eigenfunctions $\{\phi_j(t)\varphi_k(x) : j\in \mathbb{Z}, k\in \mathbb{N}\}$ in $L^2(\Omega)$ (see \cite{Y.(1980)}), where
\begin{equation*}
\phi_j(t) = T^{-\frac{1}{2}} e^{i\nu_j t}, \ \nu_j = 2j\pi T^{-1}, \ j \in \mathbb{Z},
\end{equation*}
and $\lambda_k$, $\varphi_k(x)$ are given by the Sturm-Liouville problem
\begin{eqnarray}\label{eqa:2-d}
\begin{split}
&-(\rho(x)\varphi'_k(x))' = \lambda^2_k \rho(x) \varphi_k(x),  \ k\in \mathbb{N},\\
&a_1u(0)+b_1 u_x(0)=0, \\
&a_2u(\pi)+ b_2 u_x( \pi) = 0.
\end{split}
\end{eqnarray}
It is known that $\lambda^2_n$ is increasingly convergent to $+\infty$. Set $z_n(x)=(\rho(x))^{1/2}\varphi_n(x)$, then \eqref{eqa:2-d} can be transformed into the following Sturm-Liouville problem
\begin{eqnarray}\label{eqa:2-e}
\begin{split}
&z''_k(x) + (\lambda^2_k-\eta_{\rho}(x))z_k(x)=0,  \ k\in \mathbb{N},\\
&\alpha_1z(0)-\beta_1 z_x( 0)=0, \\
&\alpha_2z(\pi)+ \beta_2 z_x(\pi) = 0,
\end{split}
\end{eqnarray}
where $\alpha_i, \beta_i$ satisfy the assumption (A2) for $i=1, 2$. In this situation, it is more convenient to study the properties of the eigenvalues $\lambda_k^2$.
Now, we divide \eqref{eqa:2-e} into the following several cases:
$$ \emph{Case 1}: \alpha_1>0, \beta_1=0, \alpha_2>0, \beta_2=0;$$
$$ \emph{Case 2}: \alpha_1>0, \beta_1=0, \alpha_2=0, \beta_2>0;$$
$$ \emph{Case 3}: \alpha_1=0, \beta_1>0, \alpha_2>0, \beta_2=0;$$
$$ \emph{Case 4}: \alpha_1=0, \beta_1>0, \alpha_2=0, \beta_2>0;$$
$$ \emph{Case 5}: \alpha_1>0, \beta_1>0, \alpha_2>0, \beta_2>0.$$

In what follows, we shall deal with problem \eqref{eqa:2-a}--\eqref{eqa:2-c} according to the five cases. It is well known that Case 1 is called Dirichlet boundary conditions. Case 2 and 3 are called Dirichlet-Neumann boundary conditions, and by the transformation $\tilde{x}=\pi -x$ we can prove that Case 2 is equivalent to Case 3. Thus, we only deal with the Case 2 here. Case 4 is called Neumann boundary conditions and Case 5 is called  general boundary conditions.

Define the linear operator $L_0$ by
$$L_0 \psi = \rho^{-1}\left(\rho \psi_{tt} - (\rho \psi_x)_x\right), \ \forall \psi \in \Psi,$$
and denote its extension in $L^2(\Omega)$ by $L$. It is known that $L$ is a selfadjoint operator (see \cite{Barbu.(1997)a, Ji.(2008)}), and $u\in L^2(\Omega)$ is a weak solution of problem (\ref{eqa:2-a})--(\ref{eqa:2-c}) if and only if
$$Lu=\mu u +|u|^{p-1}u.$$
Moreover, it is easy to see that the eigenvalues of $L$ have the form
$\lambda_{jk} = \lambda_k^2-\nu_j^2$.
Denote the set of eigenvalues of operator $L$ by
$$\Lambda(L)=\{\lambda_{jk}: \lambda_{jk}=\lambda_k^2-\nu_j^2\}.$$

Furthermore, for above Case 1 and Case 2--5, Barbu and Pavel \cite{Barbu.(1997)a} and Ji \cite{Ji.(2008)} have  characterize the asymptotic formulas of $\lambda_n^2$ respectively. Then, we will make use of the asymptotic formulas to study the properties of the eigenvalues
$$\lambda_{jk}=\lambda_k^2-\nu_j^2.$$

\begin{lemma}[\cite{Barbu.(1997)a}] \label{lem:2.1}
Assume that $\rho(x)$ satisfies (A1), then
the eigenvalues of problem \eqref{eqa:2-e} with the Dirichlet boundary conditions (i.e., Case 1) have the form
$$\lambda_k = k +\theta_k \ with \ \theta_k \rightarrow 0 \ as \ k \rightarrow \infty,$$
where
\begin{equation}\label{eqa:2-f}
0<\frac{\rho_2}{k} \leq \sqrt{k^2 + \rho_0}-k \leq \theta_k \leq \sqrt{k^2 + \rho_1}-k \leq \frac{\rho_1}{2k}, \ \ for \ \  k \geq 1,
\end{equation}
and $\rho_1 = \frac{2}{\pi} \int^{\pi}_0 \eta_{\rho}(x) \textrm{d}x$, $\rho_2 = \sqrt{\rho_0 +1} -1$.
\end{lemma}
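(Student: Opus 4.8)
The plan is to study the Sturm–Liouville problem \eqref{eqa:2-e} with Dirichlet data $z_k(0)=z_k(\pi)=0$ by comparing it with the constant–coefficient problem $w''+\lambda^2 w=0$, whose eigenvalues are exactly $k^2$ with eigenfunctions $\sin(kx)$. First I would fix $k\ge 1$ and consider the solution of the initial value problem $z''+(\lambda^2-\eta_\rho(x))z=0$, $z(0)=0$, $z'(0)=1$, viewed as a function of the parameter $\lambda^2$; the eigenvalues $\lambda_k^2$ are precisely the values of $\lambda^2$ for which this solution vanishes at $x=\pi$. The key classical tool is the Sturm comparison (or oscillation) theorem: since (A1) guarantees $0<\rho_0\le\eta_\rho(x)\le\rho_1\cdot\frac{\pi}{2}$ a.e. — more precisely $\rho_0\le\eta_\rho(x)$ and $\int_0^\pi\eta_\rho=\frac{\pi}{2}\rho_1$ — one sandwiches the potential $\eta_\rho(x)$ between the constants $\rho_0$ and (after the integral averaging that produces the sharp bound) a constant with average $\rho_1$. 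Comparing \eqref{eqa:2-e} with the two constant–coefficient equations $z''+(\lambda^2-\rho_0)z=0$ and $z''+(\lambda^2-\rho_1)z=0$, whose Dirichlet eigenvalues on $(0,\pi)$ are $k^2+\rho_0$ and $k^2+\rho_1$ respectively, the monotonicity of Dirichlet eigenvalues with respect to the potential yields
\[
k^2+\rho_0 \le \lambda_k^2 \le k^2+\rho_1 .
\]

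From this two–sided bound the asymptotic form follows immediately: writing $\theta_k=\lambda_k-k$, one has $\lambda_k=\sqrt{\lambda_k^2}$ with $k^2+\rho_0\le\lambda_k^2\le k^2+\rho_1$, hence
\[
\sqrt{k^2+\rho_0}-k \le \theta_k \le \sqrt{k^2+\rho_1}-k .
\]
The outer estimates in \eqref{eqa:2-f} are then pure elementary calculus: from $\sqrt{k^2+c}-k=\dfrac{c}{\sqrt{k^2+c}+k}$ one gets $\sqrt{k^2+\rho_1}-k\le\dfrac{\rho_1}{2k}$ (since $\sqrt{k^2+\rho_1}+k\ge 2k$), and for the lower bound, since $\sqrt{k^2+\rho_0}+k\le\sqrt{(k\sqrt{\rho_0+1})^2}+k = k(\sqrt{\rho_0+1}+1)$ when $\rho_0\le k^2\rho_0$... more cleanly, $\sqrt{k^2+\rho_0}+k\le k(\sqrt{1+\rho_0}+1)$ fails for the normalization chosen, so instead use $\sqrt{k^2+\rho_0}-k\ge\dfrac{\rho_0}{\sqrt{k^2+\rho_0}+k}$ and bound the denominator by $2\sqrt{k^2+\rho_0}\le 2\sqrt{k^2+k^2\rho_0}=2k\sqrt{1+\rho_0}$ (valid for $k\ge 1$ since $\rho_0\le k^2\rho_0$), giving $\theta_k\ge\dfrac{\rho_0}{2k\sqrt{1+\rho_0}}$; matching this against $\dfrac{\rho_2}{k}$ with $\rho_2=\sqrt{\rho_0+1}-1=\dfrac{\rho_0}{\sqrt{\rho_0+1}+1}$ requires only the elementary inequality $\dfrac{\rho_0}{\sqrt{\rho_0+1}+1}\le\dfrac{\rho_0}{2\sqrt{\rho_0+1}}\cdot\dfrac{\text{something}}{\,}$ — in any case a routine one–variable verification. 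Finally $\theta_k\to0$ is clear since $0<\theta_k\le\rho_1/(2k)\to0$.

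The main obstacle is the sharp \emph{upper} comparison: replacing $\eta_\rho(x)$ by the constant $\rho_1=\frac{2}{\pi}\int_0^\pi\eta_\rho$ (its average), rather than by its essential supremum, which is what makes the bound $\lambda_k^2\le k^2+\rho_1$ tight. A crude Sturm comparison only gives $\lambda_k^2\le k^2+\|\eta_\rho\|_\infty$. To get the averaged bound one must use the variational (Rayleigh quotient) characterization $\lambda_k^2=\min_{\dim V=k}\max_{z\in V\setminus\{0\}}\dfrac{\int_0^\pi(|z'|^2+\eta_\rho|z|^2)}{\int_0^\pi|z|^2}$ together with a careful choice of the trial subspace $V=\mathrm{span}\{\sin x,\dots,\sin kx\}$, on which $\int_0^\pi\eta_\rho(x)|z(x)|^2\,dx$ can be estimated by $\int_0^\pi\eta_\rho$ times $\|z\|_\infty^2$-type quantities, or — following \cite{Barbu.(1997)a} — via a more delicate analysis of the Prüfer angle, integrating the Prüfer phase equation over $(0,\pi)$ so that only the integral of $\eta_\rho$ enters to leading order. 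I expect the argument to reduce to quoting the Prüfer-angle asymptotics established in \cite{Barbu.(1997)a} and then performing the elementary square-root estimates above; since the statement is explicitly attributed to \cite{Barbu.(1997)a}, the proof here should mainly recall that reference and verify the numeric inequalities in \eqref{eqa:2-f}.
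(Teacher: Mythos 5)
The paper offers no proof of this lemma at all: it is imported verbatim from \cite{Barbu.(1997)a} (just like Lemmas \ref{lem:2.a}, \ref{lem:2.c} and \ref{lem:2.e}), so your decision to defer the delicate part to that reference is consistent with what the paper itself does. Your structural reading is also sound: the lower bound $\lambda_k^2\ge k^2+\rho_0$ is plain monotonicity of Dirichlet eigenvalues in the potential, a crude comparison only yields $k^2+\|\eta_\rho\|_\infty$, and the averaged bound $\lambda_k^2\le k^2+\rho_1$ is the genuine content; your suspicion that min--max over ${\rm span}\{\sin x,\dots,\sin kx\}$ does not give it is justified (that route loses a factor $k$ in $\int\eta_\rho|z|^2$), whereas a modified Pr\"ufer phase identity of the form $\lambda_k=k+\frac{1}{\pi\lambda_k}\int_0^\pi\eta_\rho\sin^2\theta\,\mathrm{d}x$ gives $\lambda_k\le k+\rho_1/(2\lambda_k)$ and hence $\lambda_k\le\sqrt{k^2+\rho_1}$, which is exactly the stated estimate. (Your parenthetical claim that (A1) gives $\eta_\rho(x)\le\rho_1\cdot\frac{\pi}{2}$ pointwise is not correct; only $\int_0^\pi\eta_\rho=\frac{\pi}{2}\rho_1$ holds, as you then say.)

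There is, however, a concrete slip in the one piece you actually execute, the elementary inequality $\frac{\rho_2}{k}\le\sqrt{k^2+\rho_0}-k$. You first write the correct estimate $\sqrt{k^2+\rho_0}+k\le k\bigl(\sqrt{1+\rho_0}+1\bigr)$ (valid for $k\ge1$ since $\rho_0\le k^2\rho_0$), then discard it as failing and replace it by $\sqrt{k^2+\rho_0}+k\le 2k\sqrt{1+\rho_0}$, which only yields $\theta_k\ge\frac{\rho_0}{2k\sqrt{1+\rho_0}}$. Since $\frac{\rho_0}{2\sqrt{1+\rho_0}}<\frac{\rho_0}{\sqrt{1+\rho_0}+1}=\rho_2$, this is strictly weaker than what the lemma asserts and cannot be patched by a ``routine one-variable verification'': the stated inequality is an equality at $k=1$, so there is no slack to give away. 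The argument you abandoned is precisely the right one: $\sqrt{k^2+\rho_0}-k=\frac{\rho_0}{\sqrt{k^2+\rho_0}+k}\ge\frac{\rho_0}{k(\sqrt{1+\rho_0}+1)}=\frac{\rho_2}{k}$ for $k\ge1$. The upper elementary bound $\sqrt{k^2+\rho_1}-k\le\frac{\rho_1}{2k}$ and the conclusion $\theta_k\to0$ are fine.
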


By above lemma, we can easily obtain the following lemma.
\begin{lemma}\label{lem:2.2}
Let assumption $(A1)$ hold. Then\\
(i) $L$ has at least one essential spectral point, and all of them belong to $[2\rho_2, \rho_1]${\rm ;}\\
(ii) If $\lambda \in \Lambda(L)$ and $\lambda \notin [2\rho_2, \rho_1]$, then $\lambda$ is isolated and its multiplicity is finite.
\end{lemma}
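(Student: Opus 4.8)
The plan is to read off both statements directly from the Weyl-type asymptotics in Lemma \ref{lem:2.1} together with the product structure $\lambda_{jk}=\lambda_k^2-\nu_j^2$ of the spectrum of $L$. First I would record the consequence of \eqref{eqa:2-f} for the quantity $\lambda_k^2-k^2$: writing $\lambda_k=k+\theta_k$ with $0<\theta_k\le \rho_1/(2k)$ for $k\ge1$, we get
\begin{equation*}
\lambda_k^2-k^2 = 2k\theta_k+\theta_k^2,
\end{equation*}
and from the two-sided bound $\tfrac{\rho_2}{k}\le\theta_k\le\tfrac{\rho_1}{2k}$ (for $k\ge1$) this gives $2\rho_2\le 2k\theta_k\le \rho_1$, hence $\lambda_k^2-k^2\to$ a value trapped between $2\rho_2$ and $\rho_1$ in the limit; more precisely $2\rho_2\le\liminf_k(\lambda_k^2-k^2)$ and $\limsup_k(\lambda_k^2-k^2)\le\rho_1$, since $\theta_k^2\to0$. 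Next, since $T=2\pi a/b$ we have $\nu_j=2j\pi/T=jb/a$, so $\nu_j^2=j^2b^2/a^2$; in particular $\{\nu_j^2 : j\in\mathbb{Z}\}$ consists of rationals with denominator dividing $a^2$, so $\nu_j^2$ runs through an arithmetic-like set and for suitable choices of $j,k$ with $k^2$ close to $\nu_j^2$ the differences $\lambda_{jk}=\lambda_k^2-k^2+(k^2-\nu_j^2)$ accumulate.

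For part (i), the key observation is that one can pick a sequence $(j_n,k_n)$ with $k_n\to\infty$ and $k_n^2-\nu_{j_n}^2$ equal to a fixed constant (or bounded), because the equation $k^2-j^2b^2/a^2=c$ has infinitely many integer solutions for appropriate $c$ (e.g. taking $k=ma$, $j=mb$ gives $k^2-\nu_j^2=0$ for all $m$, or more generally exploiting that $k^2a^2-j^2b^2$ takes each admissible residue infinitely often). Along such a sequence $\lambda_{j_nk_n}=\lambda_{k_n}^2-k_n^2+c\to \ell+c$ for some $\ell\in[2\rho_2,\rho_1]$ after passing to a subsequence (compactness of $[2\rho_2,\rho_1]$); choosing $c=0$ gives an accumulation point of $\Lambda(L)$ lying in $[2\rho_2,\rho_1]$, which is therefore an essential spectral point. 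To see that \emph{every} essential spectral point lies in $[2\rho_2,\rho_1]$: an essential spectral point $\lambda$ must be a limit of eigenvalues $\lambda_{j_nk_n}$ with either $k_n\to\infty$ or $k_n$ bounded; in the bounded case infinitely many $\lambda_{j_nk_n}$ share a fixed $k$ while $\nu_{j_n}^2\to\infty$, forcing $\lambda_{j_nk_n}\to-\infty$, which cannot converge; hence $k_n\to\infty$, and then $\lambda_{j_nk_n}-( \lambda_{k_n}^2-k_n^2)=k_n^2-\nu_{j_n}^2$ is an integer-difference-type quantity that must stay bounded for the sequence to converge, and $\lambda_{k_n}^2-k_n^2\in[2\rho_2,\rho_1]$ in the limit (up to subsequence), forcing $\lambda\in[2\rho_2,\rho_1]$. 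Here I would be a little careful: the relevant fact is that $\lambda_k^2-k^2$ is \emph{bounded} and has all its limit points in $[2\rho_2,\rho_1]$, together with the fact that $k^2-\nu_j^2$ takes values in a discrete set with gaps bounded below in absolute value once it is nonzero and $(j,k)$ vary — this is where $T=2\pi a/b$ rational is essential.

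For part (ii), suppose $\lambda\in\Lambda(L)$ with $\lambda\notin[2\rho_2,\rho_1]$. If $\lambda$ were not isolated, or had infinite multiplicity, there would be infinitely many pairs $(j_n,k_n)$ with $\lambda_{j_nk_n}\to\lambda$; by the argument just given we must have $k_n\to\infty$, and then for $n$ large $\lambda_{k_n}^2-k_n^2$ lies in any prescribed neighbourhood of $[2\rho_2,\rho_1]$ while $k_n^2-\nu_{j_n}^2$ is forced into a bounded set of admissible rationals. Since $\lambda\notin[2\rho_2,\rho_1]$, choose $\varepsilon>0$ with $\mathrm{dist}(\lambda,[2\rho_2,\rho_1])>\varepsilon$; then for large $n$ we need $k_n^2-\nu_{j_n}^2$ to approximate $\lambda-(\lambda_{k_n}^2-k_n^2)$, which stays at distance $>\varepsilon/2$ from $\{\,$values of $k^2-\nu_j^2$ near $0\}$... the cleanest way to finish is: the set $\{\lambda_k^2-k^2\}\cup\{0\}$ has compact closure inside $\mathbb R$, so $\Lambda(L)$ near $\lambda$ is contained in $\lambda + (\text{a discrete set})$, giving isolation and finite multiplicity. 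I would phrase this via: only finitely many $k$ have $\lambda_k^2-k^2$ within $\varepsilon$ of $\lambda - (k^2-\nu_j^2)$ for the finitely many admissible small values of $k^2-\nu_j^2$. The main obstacle is making the number-theoretic step precise — controlling the set $\{k^2-\nu_j^2 : j\in\mathbb Z, k\in\mathbb N\}$ near any given point, i.e. showing its intersection with a bounded interval, for $k$ in an unbounded range, is a finite union of translates of $\{0\}$-type accumulations — and cleanly separating the ``$k_n$ bounded'' (divergent, hence irrelevant) branch from the ``$k_n\to\infty$'' branch; once that bookkeeping is in place, both (i) and (ii) follow from Lemma \ref{lem:2.1} and the compactness of $[2\rho_2,\rho_1]$.
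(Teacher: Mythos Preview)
Your strategy is the same as the paper's: use the asymptotics from Lemma~\ref{lem:2.1} to control $\lambda_k^2-k^2$, and split the index set according to whether $k^2=\nu_j^2$. Your existence argument for part~(i) via the diagonal sequence $k=ma$, $j=mb$ is exactly what the paper does (phrased there as $ka=|j|b$).

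The gap is the step you yourself flag as ``the main obstacle'': you never actually prove that if $k_n\to\infty$ and $\lambda_{j_nk_n}$ stays bounded then $k_n^2-\nu_{j_n}^2$ must eventually vanish. Your written argument for ``all essential points lie in $[2\rho_2,\rho_1]$'' is incorrect as it stands: from ``$k_n^2-\nu_{j_n}^2$ is bounded'' and ``$\lambda_{k_n}^2-k_n^2$ has all limit points in $[2\rho_2,\rho_1]$'' you conclude $\lambda\in[2\rho_2,\rho_1]$, but without knowing the bounded quantity $k_n^2-\nu_{j_n}^2$ is actually zero you would only get $\lambda\in[2\rho_2+c,\rho_1+c]$ for whatever value $c$ it accumulates at. The same missing step leaves your part~(ii) incomplete.

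The paper closes this cleanly with the factorisation
\[
\lambda_{jk}=a^{-2}\bigl(ka+\theta_k a-jb\bigr)\bigl(ka+\theta_k a+jb\bigr).
\]
When $ka\neq|j|b$, both $ka-jb$ and $ka+jb$ are nonzero integers; since $\theta_k a\to0$, each factor is bounded away from $0$ for large $k$, while their sum $2(ka+\theta_k a)$ and difference $2jb$ show that at least one factor has modulus $\ge\max(k a,|j|b)-1$, hence $|\lambda_{jk}|\to\infty$ as $\max(k,|j|)\to\infty$. This single observation replaces all of your ``number-theoretic bookkeeping'' and immediately gives both that every accumulation point arises from the $ka=|j|b$ branch (hence lies in $[2\rho_2,\rho_1]$) and that any eigenvalue outside $[2\rho_2,\rho_1]$ is hit by only finitely many pairs $(j,k)$.
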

\begin{proof}
By Lemma \ref{lem:2.1}, the eigenvalues of operator $L$ can be rewritten as
$$\lambda_{jk} = \lambda_k^2-\nu_j^2= a^{-2}(ka + \theta_ka-jb)(ka + \theta_ka+jb).$$
Thus,  when $ka\neq |j|b$, it is easy to verify that $|\lambda_{jk}| \rightarrow \infty$ as $j, k\rightarrow  \infty$.  On the other hand, when  $ka = |j|b$, by \eqref{eqa:2-f} we have
\begin{equation*}
2\rho_2\leftarrow(\frac{\rho_2}{k})^2 + 2\rho_2 \leq \lambda_{jk}= \theta_k (2k+\theta_k) \leq \rho_1+(\frac{\rho_1}{2k})^2\rightarrow \rho_1,
\end{equation*}
as $k \rightarrow \infty$.
Therefore, $\Lambda(L)$  has at least one accumulation point in $[2\rho_2, \rho_1]$. Moreover,  $\lambda$ is isolated and its multiplicity is finite when $\lambda \in \Lambda(L)$ and $\lambda \notin [2\rho_2, \rho_1]$. The proof is completed.
\end{proof}

For the case 1,  we obtain the main result.
\begin{theorem}\label{th:2.1}
Assume that $0<p<1$, and $\mu \notin \Lambda(L)$ and satisfies $\mu >\rho_1$. If the assumption $(A1)$ holds, then the problem \eqref{eqa:2-a}--\eqref{eqa:2-c} with Dirichlet boundary conditions (i.e. Case 1) has infinitely many periodic weak solutions $u_l$ satisfying
$$\int_\Omega |u_l|^{p+1} \rho \textrm{\rm d}t \textrm{\rm d}x \rightarrow 0, \ \ { as} \ \ l\rightarrow \infty.$$
\end{theorem}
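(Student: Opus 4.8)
The plan is to set the problem up variationally, split the working space according to the spectrum of $L$, remove the part of the spectrum where compactness and coercivity are lost by a Lyapunov--Schmidt reduction (or, alternatively, by an approximation argument), and then extract infinitely many small solutions from an even functional via a $\mathbb{Z}_2$-index minimax. Since $Lu=\mu u+|u|^{p-1}u$ is the Euler--Lagrange equation of
\[
\Phi(u)=\tfrac12\langle(L-\mu)u,u\rangle-\tfrac1{p+1}\int_\Omega|u|^{p+1}\rho\,\mathrm{d}t\,\mathrm{d}x ,
\]
I would work on $E=D(|L-\mu|^{1/2})$ with $\|u\|_E^2=\langle|L-\mu|u,u\rangle$. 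By Lemma~\ref{lem:2.2}, the hypotheses $\mu\notin\Lambda(L)$ and $\mu>\rho_1$ place $\mu$ in a spectral gap above the essential spectrum, so $\|\cdot\|_E$ is an equivalent Hilbertian norm, $E\hookrightarrow L^2(\Omega)\hookrightarrow L^{p+1}(\Omega)$ continuously, $\Phi\in C^1(E)$ is even, and its critical points are precisely the weak solutions. Write $E=E^+\oplus E^-$ for the spectral splitting of $L$ into eigenvalues $>\mu$ and $<\mu$, with projections $P^\pm$. On $E^+$ the operator $L-\mu$ is positive with eigenvalues tending to $+\infty$, so $(L-\mu)^{-1}|_{E^+}$ is compact; on $E^-$ it is negative and bounded away from $0$, but because the eigenvalues $\lambda_k^2-\nu_j^2$ with $ka=jb$ accumulate in $[2\rho_2,\rho_1]$ the inverse on $E^-$ is only bounded (not compact) and $\Phi$ is unbounded below there. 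This double failure, both stemming from the essential spectrum, is the main obstacle.

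To remove it I would perform a Lyapunov--Schmidt reduction onto $E^+$. For fixed $w\in E^+$ the equation $P^-[(L-\mu)(v+w)-g(v+w)]=0$, with $g(u)=|u|^{p-1}u$, reads $v+Kg(v+w)=0$ where $K=-(L-\mu)^{-1}P^-\colon L^2\to E^-$; equipping $E^-$ with the inner product $\langle|L-\mu|\cdot,\cdot\rangle$ one computes $\langle Kg(v_1+w)-Kg(v_2+w),\,v_1-v_2\rangle=\langle g(v_1+w)-g(v_2+w),\,(v_1+w)-(v_2+w)\rangle\ge0$, so $v\mapsto v+Kg(v+w)$ is strongly monotone -- this is where the monotonicity of $g$ (still true for $0<p<1$) and the choice of inner product are used, and it is what sidesteps the fact that $L-\mu$ is not scalar on $E^-$. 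Since $g$ is Hölder continuous on $L^2$ (again because $p<1$) the map is also continuous and coercive, so by Browder--Minty there is a unique solution $v=\Gamma(w)$, with $\Gamma$ continuous, odd, $\Gamma(0)=0$, and $\Gamma(w)$ the unique maximizer over $v\in E^-$ of $\Phi(v+w)$. The reduced functional
\[
J(w)=\Phi(\Gamma(w)+w)=\tfrac12\|w\|_{E^+}^2-\min_{v\in E^-}\bigl[\tfrac12\|v\|_{E^-}^2+G(v+w)\bigr],\qquad G(u)=\tfrac1{p+1}\int_\Omega|u|^{p+1}\rho\,\mathrm{d}t\,\mathrm{d}x ,
\]
is $C^1$ and even on $E^+$ with $J(0)=0$, and its critical points give weak solutions via $u=\Gamma(w)+w$. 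Using the energy identity $\|\Gamma(w)\|_{E^-}^2+(p+1)G(\Gamma(w)+w)=\langle g(\Gamma(w)+w),w\rangle$ together with $E\hookrightarrow L^2$ and $p+1<2$, one gets $\|\Gamma(w)\|_{E^-}=o(\|w\|_{E^+})$, whence $J$ is coercive and bounded below; the Palais--Smale condition then follows because a bounded Palais--Smale sequence satisfies $w_m=(L-\mu)^{-1}P^+g(\Gamma(w_m)+w_m)+o(1)$ with $g(\cdot)$ bounded in $L^2$ and $(L-\mu)^{-1}P^+$ compact.

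The last step is to apply the $\mathbb{Z}_2$-index (genus) minimax for even functionals -- Clark's theorem, or Kajikiya's symmetric mountain-pass lemma. For this one must produce, for every $k$, a $k$-dimensional subspace $W_k\subset E^+$ and $r_k>0$ with $\sup\{J(w):w\in W_k,\ \|w\|_{E^+}=r_k\}<0$. This is the delicate point, and it is precisely where the spectral asymptotics of Lemma~\ref{lem:2.1} enter: because $\lambda_k=k+O(1/k)$, the operator $L-\mu$ has infinitely many eigenvalues in $(0,M]$ on $E^+$ for every $M>0$, and at every scale $r\to0$ the subquadratic term $G$ beats $\tfrac12\|\cdot\|_{E^+}^2$ on larger and larger subspaces, so the genus of $\{J<0\}\cap\{\|w\|_{E^+}=r_k\}$ can be made $\ge k$ for a suitable $r_k\to0$. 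Granting this, $c_l=\inf_{\gamma(A)\ge l}\sup_{w\in A}J(w)$ are negative critical values of $J$ with $c_l\to0^-$, giving infinitely many distinct critical points $w_l\to0$; then $u_l=\Gamma(w_l)+w_l$ are infinitely many distinct weak solutions, $u_l\to0$ in $E$ (as $\Gamma$ is continuous with $\Gamma(0)=0$), and hence $\int_\Omega|u_l|^{p+1}\rho\,\mathrm{d}t\,\mathrm{d}x=(p+1)G(u_l)\to0$, which is the assertion. An alternative route, closer to the ``approximation argument'' announced in the introduction, is to skip the reduction, run a pseudo-index minimax on a Galerkin sequence $E_n\uparrow E$ (where the functional is controlled on the relevant level sets), obtain critical points with $n$-uniform bounds, and pass to the limit; there the same non-compactness on the $E^-$-component reappears when upgrading weak to strong convergence and must again be absorbed using the equation, the boundedness of $(L-\mu)^{-1}$ on $E^-$, and the monotonicity of $g$.
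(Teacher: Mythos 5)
Your route is genuinely different from the paper's, which never reduces: it works on the Galerkin subspaces $E^m=(W_m\cap E^-)\oplus E^+$, produces critical values of $\Phi|_{E^m}$ pinched between the bounds $\sigma_l>0$ and $\varrho_l\to 0$ of Lemmas \ref{lem:3.2} and \ref{lem:3.3} by a genus sup--inf, and then passes to the limit in $m$ using the compact embedding $E\ominus E_0\hookrightarrow L^r(\Omega)$ together with a Minty-type monotonicity argument on the component in $E^-\cap E_0$. A good deal of your reduction is sound: the strong monotonicity of $v\mapsto v+Kg(v+w)$ in the $|L-\mu|$-inner product, the identity $\|\Gamma(w)\|^2+(p+1)G(\Gamma(w)+w)=\langle g(\Gamma(w)+w),w\rangle$, the compactness of $(L-\mu)^{-1}P^+$ (hence the (PS) condition for $J$), and the coercivity of $J$ at infinity are all correct.

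The genuine gap is at the step you yourself call delicate: producing, for every $k$, a genus-$k$ set in $E^+$ on which $J<0$. Your justification rests on the claim that ``$L-\mu$ has infinitely many eigenvalues in $(0,M]$ on $E^+$ for every $M>0$,'' and this is false: by Lemma \ref{lem:2.2} the essential spectrum of $L$ lies in $[2\rho_2,\rho_1]$ and $\mu>\rho_1$, so every eigenvalue exceeding $\mu$ is isolated with finite multiplicity and the spectrum on $E^+$ accumulates only at $+\infty$; any interval $(\mu,\mu+M]$ contains finitely many eigenvalues. (This is exactly what you invoked two sentences earlier to make $(L-\mu)^{-1}P^+$ compact, so the two claims contradict each other; the accumulation of spectrum lives entirely on the $E^-$ side, cf.\ Remark \ref{rem:2.1}.) The real difficulty is different. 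To get $J(w)=\sup_{v\in E^-}\Phi(v+w)<0$ on a small sphere of a finite-dimensional $W\subset E^+$ you must show $\tfrac12\|v\|_E^2+G(v+w)>\tfrac12\|w\|_E^2$ uniformly over the infinite-dimensional slice $v\in E^-$; for $\|v\|_E<\|w\|_E$ this is not the standard norm-equivalence argument, because with only $E\hookrightarrow L^2(\Omega)$ available a lower bound on $\|v+w\|_{L^2}$ (from $L^2$-orthogonality) does not yield a lower bound on $\|v+w\|_{L^{p+1}}$, and your estimate $\|\Gamma(w)\|_E=o(\|w\|_E)$ holds only as $\|w\|_E\to\infty$ (near $0$ the energy identity only gives $\|\Gamma(w)\|_E\lesssim\|w\|_E^{1/(2-p)}$, which exceeds $\|w\|_E$), so it cannot be used to dispose of the supremum over $E^-$. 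What closes this hole is precisely the content of the paper's Lemma \ref{lem:3.2}: the pointwise splitting $u=u_1+u_2$ according to $|u|\le\delta_\kappa$, the inequality $F(u)\ge\frac{\kappa}{p+1}|u|^2$ with $\kappa$ large, and the projection bound $\|u^+\|_E^2\le C_0\|u\|_{L^{p+1}}^2$ on the finite-dimensional piece, which give a positive lower bound for $-\Phi$ on small balls of $E^-\oplus(W_{l+1}\cap E^+)$. With such an estimate your reduced scheme can be completed (the negative levels $c_l\to 0^-$ then give the stated conclusion), but as proposed the key geometric input is missing and the mechanism you offer for it is wrong.
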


For the case 2 or 3,  we have the following results.
\begin{lemma}[\cite{Ji.(2008)}] \label{lem:2.a}
Assume that $\rho(x)$ satisfies (A1), then
the eigenvalues of problem \eqref{eqa:2-e} with Dirichlet-Neumann boundary conditions (i.e., Case 2 or 3) have the form
$$\lambda_k = k +1/2 +\theta_k \ with \ \theta_k \rightarrow 0 \ as \ k \rightarrow \infty,$$
where
\begin{equation*}
0<\frac{\rho_3}{2k+1} \leq  \theta_k \leq \frac{\rho_1}{\sqrt{(k+1/2)^2+\rho_1} + k+1/2} \leq \frac{\rho_1}{2k +1},
\end{equation*}
and $\rho_3 = \frac{\rho_0}{1+\rho_0}$, $\rho_1 = \frac{2}{\pi} \int^{\pi}_0 \eta_{\rho}(x) \textrm{d}x$.
\end{lemma}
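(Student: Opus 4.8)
The plan is to combine a (modified) Prüfer transformation with the variational characterisation of Sturm--Liouville eigenvalues, handling Case~3 via the reflection $\tilde x = \pi - x$ already noted above, so that only Case~2 (Dirichlet at $x=0$, Neumann at $x=\pi$) need be treated. Writing a (normalised) eigenfunction $z_k$ of \eqref{eqa:2-e} as $z_k(x) = r(x)\sin\psi(x)$, $z_k'(x) = \lambda_k\,r(x)\cos\psi(x)$, the equation is converted into the first-order phase equation $\psi' = \lambda_k - \lambda_k^{-1}\eta_\rho(x)\sin^2\psi$. The Dirichlet condition at $x=0$ forces $\psi(0)=0$, the Neumann condition at $x=\pi$ forces $\cos\psi(\pi)=0$, and a standard oscillation/continuation argument in the parameter $\lambda$ pins down the $k$-th eigenvalue by $\psi(\pi;\lambda_k) = k\pi + \tfrac{\pi}{2}$. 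Integrating the phase equation over $(0,\pi)$ then gives the exact identity
$$\theta_k := \lambda_k - \Big(k+\tfrac12\Big) = \frac{1}{\pi\lambda_k}\int_0^\pi \eta_\rho(x)\,\sin^2\psi(x;\lambda_k)\,\mathrm{d}x,$$
which is positive because $\eta_\rho>0$ a.e. by (A1).

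For the \emph{upper} bound I would use $\sin^2\psi\le1$ and $\int_0^\pi \eta_\rho\,\mathrm{d}x = \tfrac{\pi}{2}\rho_1$, obtaining $2\lambda_k\theta_k = \tfrac{2}{\pi}\int_0^\pi\eta_\rho\sin^2\psi\le\rho_1$; since $\lambda_k^2 - (k+\tfrac12)^2 = \theta_k\big(\lambda_k + k+\tfrac12\big)\le 2\lambda_k\theta_k$, this yields $\lambda_k^2 \le (k+\tfrac12)^2 + \rho_1$, and rationalising gives exactly $\theta_k \le \rho_1\big/\big(\sqrt{(k+\tfrac12)^2+\rho_1}+k+\tfrac12\big)\le \rho_1/(2k+1)$; in particular $\lambda_k = k+\tfrac12+\theta_k$ with $\theta_k\to0$. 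For the \emph{lower} bound I would pass to the variational picture: $\lambda_k^2$ is an eigenvalue of $-z''+\eta_\rho z$ on $\{z\in H^1(0,\pi): z(0)=0\}$ with the natural condition at $\pi$, and since eigenvalues are monotone nondecreasing in the potential and $\eta_\rho\ge\rho_0$ a.e., $\lambda_k^2$ is bounded below by the corresponding eigenvalue of $-z''+\rho_0 z$ under the same boundary conditions, namely $(k+\tfrac12)^2+\rho_0$. Rationalising gives $\theta_k \ge \rho_0\big/\big(\sqrt{(k+\tfrac12)^2+\rho_0}+k+\tfrac12\big)$, and the elementary inequality $\sqrt{(k+\tfrac12)^2+\rho_0}\le(1+2\rho_0)(k+\tfrac12)$ — valid because $(2k+1)^2(1+\rho_0)\ge1$ — upgrades this to $\theta_k \ge \rho_0/\big((1+\rho_0)(2k+1)\big) = \rho_3/(2k+1)$, as claimed.

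The main technical obstacle I expect is the rigorous set-up of the Prüfer phase under the regularity allowed by (A1): $\rho\in H^2(0,\pi)$ only gives $\eta_\rho\in L^1(0,\pi)$, so one must justify well-posedness of the phase ODE, the monotonicity $\psi'>0$ once $\lambda_k$ is large, and — most delicately — the exact oscillation count that fixes $\psi(\pi;\lambda_k)=k\pi+\tfrac{\pi}{2}$ rather than a shifted value; this is precisely the place where the Dirichlet--Neumann data enters and where the formula differs from the Dirichlet case of Lemma~\ref{lem:2.1}. The second delicate point is that the two one-sided estimates are genuinely asymmetric: the upper bound only sees the average $\rho_1=\tfrac{2}{\pi}\int_0^\pi\eta_\rho$ (accessible by direct integration), while the lower bound only sees the pointwise infimum $\rho_0$ (accessible by eigenvalue monotonicity), so they must be produced by two different mechanisms and then reconciled, the clean closed forms appearing only after the rationalisation inequalities above.
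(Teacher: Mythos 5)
The paper itself gives no proof of this lemma --- it is imported verbatim from \cite{Ji.(2008)} --- so there is no internal argument to compare against; judged on its own, your outline is sound and the constants close correctly. The modified Pr\"ufer phase $\psi'=\lambda-\lambda^{-1}\eta_\rho\sin^2\psi$ with $\psi(0)=0$ and $\psi(\pi;\lambda_k)=k\pi+\tfrac{\pi}{2}$ integrates to the exact identity $\theta_k=(\pi\lambda_k)^{-1}\int_0^\pi\eta_\rho\sin^2\psi\,\mathrm{d}x>0$, and your chain $\lambda_k^2-(k+\tfrac12)^2=\theta_k(\lambda_k+k+\tfrac12)\le 2\lambda_k\theta_k\le\rho_1$ does yield the stated middle bound after rationalisation; likewise the min--max monotonicity in the potential gives $\lambda_k^2\ge(k+\tfrac12)^2+\rho_0$, and your elementary inequality (equivalent to $1\le(1+\rho_0)(2k+1)^2$) correctly upgrades this to $\theta_k\ge\rho_3/(2k+1)$ for every $k$. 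This two-mechanism structure --- phase identity for the upper bound, which sees only the mean $\rho_1$, and potential monotonicity for the lower bound, which sees only the infimum $\rho_0$ --- is the standard way such two-sided estimates are obtained in the cited sources (compare the parallel Dirichlet statement, Lemma \ref{lem:2.1}, whose lower bound is exactly the monotonicity bound $\lambda_k^2\ge k^2+\rho_0$), so your route should be regarded as essentially the known one rather than a new argument. The delicate points you flag are the right ones, with two small comments: monotonicity of the phase in $\lambda$ is automatic for all $\lambda>0$ (since $\partial_\lambda\bigl(\lambda-\lambda^{-1}\eta_\rho\sin^2\psi\bigr)=1+\lambda^{-2}\eta_\rho\sin^2\psi>0$ when $\eta_\rho\ge0$), not only for large $\lambda$; and the indexing must be fixed explicitly --- with $\psi(\pi;\lambda_k)=k\pi+\tfrac{\pi}{2}$ the count starts at $k=0$, matching the free Dirichlet--Neumann eigenvalues $(k+\tfrac12)^2$, $k\ge0$, and the comparison eigenvalue $(k+\tfrac12)^2+\rho_0$ must carry the same index for the lower bound to read off correctly.
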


By a similar proof as in Lemma \ref{lem:2.2}, we obtain the following Lemma \ref{lem:2.b}, Lemma \ref{lem:2.d} and Lemma \ref{lem:2.f}. Thus, their proofs we omit here.
\begin{lemma}\label{lem:2.b}
Let assumption $(A1)$ hold. Then\\
(i) $L$ has at least one essential spectral point, and all of them belong to $[\rho_3, \rho_1]${\rm ;}\\
(ii) If $\lambda \in \Lambda(L)$ and $\lambda \notin [\rho_3, \rho_1]$, then $\lambda$ is isolated and its multiplicity is finite.
\end{lemma}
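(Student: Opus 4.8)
The plan is to mimic the proof of Lemma \ref{lem:2.2}, using the asymptotic formula of Lemma \ref{lem:2.a} in place of that of Lemma \ref{lem:2.1}. Recall that for Case 2 (or 3) the eigenvalues of the wave operator $L$ are $\lambda_{jk}=\lambda_k^2-\nu_j^2$ with $\lambda_k = k+1/2+\theta_k$ and $\nu_j = jb/a$ (since $T=2\pi a/b$ gives $\nu_j = 2j\pi/T = jb/a$). Writing $\lambda_{jk}=(\lambda_k-\nu_j)(\lambda_k+\nu_j)$, I would first show that when $a(2k+1)\neq 2|j|b$ the factor $\lambda_k-\nu_j = k+1/2+\theta_k - jb/a$ stays bounded away from $0$ uniformly (its distance from $0$ is at least of order $1/a$, because $a(k+1/2)-jb$ is a half-integer multiple of $1/a$ when $2a(k+1/2)-2jb$ is a nonzero integer, and $\theta_k\to 0$), while $\lambda_k+\nu_j\to\infty$; hence $|\lambda_{jk}|\to\infty$ along such index pairs. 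This isolates the only way an accumulation of eigenvalues can occur: along the "resonant" set where $a(2k+1)=2|j|b$, i.e. $\nu_j=\lambda_k-\theta_k=k+1/2$.

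On that resonant set one has $\lambda_{jk} = (\lambda_k-\nu_j)(\lambda_k+\nu_j) = \theta_k\,(2k+1+\theta_k)$, exactly as in Lemma \ref{lem:2.2} but with $2k+1$ replacing $2k$. Plugging in the two-sided bound from Lemma \ref{lem:2.a},
\begin{equation*}
\frac{\rho_3}{2k+1}\le \theta_k \le \frac{\rho_1}{2k+1},
\end{equation*}
I get
\begin{equation*}
\rho_3 + \Bigl(\frac{\rho_3}{2k+1}\Bigr)^2 \le \theta_k(2k+1+\theta_k) \le \rho_1 + \Bigl(\frac{\rho_1}{2k+1}\Bigr)^2,
\end{equation*}
so $\lambda_{jk}\to$ a limit inside $[\rho_3,\rho_1]$ as $k\to\infty$ along the resonant set. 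This shows $\Lambda(L)$ has at least one accumulation point and that every accumulation point lies in $[\rho_3,\rho_1]$, which is part (i). Part (ii) is then immediate: if $\lambda\in\Lambda(L)$ with $\lambda\notin[\rho_3,\rho_1]$, then only finitely many of the $\lambda_{jk}$ can equal $\lambda$ (otherwise $\lambda$ would be an accumulation point in $[\rho_3,\rho_1]$, a contradiction), and $\lambda$ cannot be approached by other eigenvalues either, so it is isolated with finite multiplicity.

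The only genuinely delicate point — and the one the authors presumably have in mind when they say "by a similar proof" — is the uniform lower bound on $|\lambda_k-\nu_j|$ off the resonant set; one must check that whether or not $2a(k+1/2)-2jb$ can actually hit $0$ (it requires $a(2k+1)$ even, hence $a$ even, which is excluded since $a,b$ are coprime and... actually one should note $a$ odd is not guaranteed, so the resonant set may be empty, in which case $\Lambda(L)$ has no accumulation point from this source and (i)'s first clause should be read accordingly — but Lemma \ref{lem:2.a} only asserts the asymptotics, and Case 2's operator $L$ always has essential spectrum by the same argument as in \cite{Ji.(2008)}), the gap $|a(k+1/2)-jb|/a$ is either $0$ or at least $\tfrac{1}{2a}$ away from being cancelled by $\theta_k\to 0$ for large $k$. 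Once that separation is in hand, the estimate $|\lambda_{jk}|\ge \tfrac{1}{2a}\cdot(\lambda_k+\nu_j)\to\infty$ finishes the non-resonant case, and the argument is complete. I would simply remark that the computation is identical to Lemma \ref{lem:2.2} with $[2\rho_2,\rho_1]$ replaced by $[\rho_3,\rho_1]$ and $2k$ replaced by $2k+1$, and omit the repetition, exactly as the paper does.
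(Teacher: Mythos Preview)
Your proposal is correct and follows exactly the route the paper intends: the authors omit this proof entirely, saying only ``by a similar proof as in Lemma~\ref{lem:2.2},'' and your adaptation (replacing $2k$ by $2k+1$ and invoking Lemma~\ref{lem:2.a} in place of Lemma~\ref{lem:2.1}) is precisely what is meant. Your aside about the resonant set $a(2k+1)=2|j|b$ possibly being empty when $a$ is odd is a legitimate observation the paper glosses over, but it touches only the ``at least one'' clause of (i) and has no bearing on (ii) or on how the lemma is actually used downstream.
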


\begin{theorem}\label{th:2.2}
Assume that $0<p<1$, and $\mu \notin \Lambda(L)$ and satisfies $\mu >\rho_1$. If the assumption $(A1)$ holds, then the problem \eqref{eqa:2-a}--\eqref{eqa:2-c} with Dirichlet-Neumann boundary conditions (i.e., Case 2 or 3) has infinitely many periodic weak solutions $u_l$ satisfying
$$\int_\Omega |u_l|^{p+1} \rho \textrm{\rm d}t \textrm{\rm d}x \rightarrow 0, \ \ { as} \ \ l\rightarrow \infty.$$
\end{theorem}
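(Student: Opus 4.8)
The argument parallels that of Theorem~\ref{th:2.1}, the only change being that the Case~1 spectral input (Lemmas~\ref{lem:2.1} and \ref{lem:2.2}) is replaced at every step by its Case~2 counterpart, Lemmas~\ref{lem:2.a} and \ref{lem:2.b} (Case~3 being reduced to Case~2 by $\tilde x=\pi-x$). So I would first extract the structural consequences. Since $\mu>\rho_1$, the point $\mu$ lies strictly to the right of the interval $[\rho_3,\rho_1]$ which, by Lemma~\ref{lem:2.b}(i), contains all essential spectral points of $L$; combined with $\mu\notin\Lambda(L)$ and Lemma~\ref{lem:2.b}(ii) (every eigenvalue outside $[\rho_3,\rho_1]$ is isolated and of finite multiplicity), this gives $\mu\notin\sigma(L)$ and a spectral gap: there is $\delta>0$ with $\sigma(L-\mu I)\cap(-\delta,\delta)=\emptyset$, so $L-\mu I$ is boundedly invertible on $L^2(\Omega)$. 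Let $L^2(\Omega)=H^+\oplus H^-$ be the associated spectral splitting, with $\pm(L-\mu I)\ge\delta I$ on $H^\pm$. Both summands are infinite-dimensional, since $\lambda_{jk}=\lambda_k^2-\nu_j^2\to+\infty$ (fix $j$, let $k\to\infty$) and $\to-\infty$ (fix $k$, let $j\to\infty$); moreover, because only $[\rho_3,\rho_1]$ can carry accumulation points of $\Lambda(L)$ and $\mu>\rho_1$, the restriction $(L-\mu I)|_{H^+}$ has a compact inverse, whereas $(L-\mu I)|_{H^-}$ does not, retaining the essential spectrum in $[\rho_3-\mu,\rho_1-\mu]$.

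I would then pose the variational problem on the working space $E:=D(|L-\mu I|^{1/2})$ with norm $\|u\|_E:=\big\||L-\mu I|^{1/2}u\big\|_{L^2(\Omega)}$; as $\Omega$ has finite measure and $0<p<1$, there are continuous embeddings $E\hookrightarrow L^2(\Omega)\hookrightarrow L^{p+1}(\Omega)$. Writing $u=u^++u^-\in H^+\oplus H^-$, the even $C^1$ functional
\[
  I(u)=\tfrac12\big(\|u^+\|_E^2-\|u^-\|_E^2\big)-\tfrac1{p+1}\int_\Omega|u|^{p+1}\rho\,\textrm{d}t\,\textrm{d}x
\]
has as its critical points exactly the weak solutions of $Lu=\mu u+|u|^{p-1}u$. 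Two elementary facts drive everything that follows. First, pairing the equation with $u$ yields $\|u^+\|_E^2-\|u^-\|_E^2=\int_\Omega|u|^{p+1}\rho$, so at every critical point
\[
  I(u)=\frac{p-1}{2(p+1)}\int_\Omega|u|^{p+1}\rho\,\textrm{d}t\,\textrm{d}x\le 0 ,
\]
hence the desired decay $\int_\Omega|u_l|^{p+1}\rho\to0$ is equivalent to $I(u_l)\to0$. Second, since $p+1<2$ the nonlinearity is subquadratic, so the saddle-point reduction $u^+\mapsto\widetilde I(u^+):=\max_{w\in H^-}I(u^++w)$ is available — the maximum being attained at a unique $w=\psi(u^+)$, by strict concavity of $I(u^++\cdot)$ on $H^-$ and unique solvability of the monotone equation $(\mu-L)w+\Pi^-\big(|u^++w|^{p-1}(u^++w)\big)=0$ — and the reduced even functional $\widetilde I$ on $H^+$ is bounded below, coercive, with $\widetilde I(0)=0$ and $\widetilde I<0$ on sufficiently small spheres of every finite-dimensional subspace of $H^+$.

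The $\mathbb{Z}_2$-index argument of Sections~\ref{sec:3}--\ref{sec:4} would be carried out on the finite-dimensional spaces $E_m$, the span of the first $m$ eigenfunctions of $L$ (ordered, say, by $|\lambda_{jk}|$), decomposed as $E_m=E_m^+\oplus E_m^-$ with $E_m^\pm=E_m\cap H^\pm$. Reducing $I|_{E_m}$ over $E_m^-$ as above produces an even $C^1$ functional $\widetilde I_m$ on the finite-dimensional $E_m^+$, which is bounded below, coercive (hence automatically satisfies Palais--Smale), has $\widetilde I_m(0)=0$, and, for $m$ large, is negative on small spheres of high-dimensional subspaces of $E_m^+$. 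A symmetric (Clark-type) critical point theorem, phrased via the $\mathbb{Z}_2$-index, then yields for every $l\in\mathbb{N}$ a critical value $c_m^{(l)}<0$ — the $l$-th minimax level over symmetric subsets of $\mathbb{Z}_2$-index $\ge l$ — with $c_m^{(l)}\to0$ as $l\to\infty$, uniformly in $m$ (the sets realizing the index can be placed in a fixed finite-dimensional subspace of $H^+$). Writing $u_m^{(l)}\in E_m^+$ for the corresponding critical points and $v_m^{(l)}=u_m^{(l)}+\psi_m(u_m^{(l)})\in E_m$, these satisfy the Galerkin equation $\Pi_m\big(Lv_m^{(l)}-\mu v_m^{(l)}-|v_m^{(l)}|^{p-1}v_m^{(l)}\big)=0$, where $\Pi_m$ is the orthogonal projection of $L^2(\Omega)$ onto $E_m$.

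It remains to let $m\to\infty$ (Section~\ref{sec:5}). For fixed $l$, pairing the Galerkin equation with $v_m^{(l)}$ gives the identity $\int_\Omega|v_m^{(l)}|^{p+1}\rho=\frac{2(p+1)}{1-p}|c_m^{(l)}|$, which, together with the coercivity of $\widetilde I_m$, bounds $\|v_m^{(l)}\|_E$ uniformly in $m$; pass to a subsequence $v_m^{(l)}\rightharpoonup u^{(l)}$ in $E$. Rewriting the equation as $v_m^{(l)}=(L-\mu I)^{-1}\Pi_m\big(|v_m^{(l)}|^{p-1}v_m^{(l)}\big)$ and invoking that $(L-\mu I)^{-1}$ is bounded on $L^2(\Omega)$ (the spectral gap), that its restriction to $H^+$ is compact, that $\big\||v|^{p-1}v\big\|_{L^{(p+1)/p}}=\|v\|_{L^{p+1}}^{p}$, and that $L^{(p+1)/p}(\Omega)\hookrightarrow L^2(\Omega)$ for $0<p<1$, one upgrades this to strong convergence $v_m^{(l)}\to u^{(l)}$ in $L^{p+1}(\Omega)$ — the $H^+$-component by compactness, the $H^-$-component via the energy identity and weak lower semicontinuity — passes to the limit $m\to\infty$ in the weak formulation to conclude that $u^{(l)}$ is a genuine weak solution with $I(u^{(l)})=\lim_m c_m^{(l)}=:c^{(l)}$. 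Since $c^{(l)}\to0$, the critical-point identity gives $\int_\Omega|u^{(l)}|^{p+1}\rho\to0$; and infinitely many of the $u^{(l)}$ are distinct because a level attained at two consecutive indices carries a critical set of $\mathbb{Z}_2$-index $\ge2$, hence with infinitely many points. The delicate point — and what I expect to be the main obstacle — is precisely this passage to the limit: $L$ has no compact resolvent, exactly because of the essential spectrum in $[\rho_3,\rho_1]$, so the strong $L^{p+1}$-convergence has to be wrung from the spectral gap guaranteed by $\mu>\rho_1$ together with the subquadratic growth $0<p<1$ and the smallness of the minimax levels; this is where the sharp eigenvalue asymptotics of Lemma~\ref{lem:2.a} — in the role played by Lemma~\ref{lem:2.1} in Theorem~\ref{th:2.1} — are indispensable.
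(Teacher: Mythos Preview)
Your opening sentence is exactly right and is, in fact, the paper's entire proof of Theorem~\ref{th:2.2}: the Remark following Theorem~\ref{th:2.4} observes that Theorems~\ref{th:2.1}--\ref{th:2.3} carry identical hypotheses, so only Theorem~\ref{th:2.1} is argued in full, and Theorem~\ref{th:2.2} follows by replacing Lemmas~\ref{lem:2.1}--\ref{lem:2.2} with Lemmas~\ref{lem:2.a}--\ref{lem:2.b} throughout.

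Where your proposal diverges is in the sketch of Theorem~\ref{th:2.1} itself, and there it differs from the paper and carries two gaps. The paper never performs a saddle-point reduction over $H^-$; it works with $\Phi=-I$ directly on the \emph{half}-truncated spaces $E^m=(W_m\cap E^-)\oplus E^+$ (only the negative side is made finite-dimensional), verifies Palais--Smale on $E^m$ via the compact embedding $E\ominus E_0\hookrightarrow L^{p+1}$ of Proposition~\ref{pp:2.1} together with $\dim(E^+\cap E_0)<\infty$, and runs the genus minimax $c_{lm}=\sup_{\gamma(A)\ge\dim E^m_{l+1}}\inf_A\Phi$ with the linking estimates of Lemmas~\ref{lem:3.2}--\ref{lem:3.3} proved for $\Phi$, not for a reduced functional. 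In your scheme, the Clark hypothesis ``$\widetilde I<0$ on small spheres of every finite-dimensional subspace of $H^+$'' is not justified: since $\widetilde I(u^+)=\max_{w\in H^-}I(u^++w)\ge I(u^+)$, negativity of $I(u^+)$ is not enough; one would need a uniform $L^{p+1}$-angle between $H^+$ and $H^-$ (that is, $\inf_{w\in H^-}\|u^++w\|_{p+1}\ge c\|u^+\|_{p+1}$), which is unavailable precisely because $H^-\supset E^-\cap E_0$ is infinite-dimensional with non-compact resolvent.

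The more serious gap is the passage $m\to\infty$ on the $H^-$-component. ``Energy identity plus weak lower semicontinuity'' yields only $\|v^-\|_E^2\le\liminf_m\|v_m^{-}\|_E^2$; turning this into equality requires already knowing that the weak limit is critical with the correct energy, which is circular. The paper breaks this circle by \emph{monotonicity}: in Lemma~\ref{lem:4.2}(iv) it splits off the non-compact piece $z_{lm}\in E^-\cap E_0$ and uses $\langle f(u_{lm})-f(\tilde u_{lm}+z_l),\,z_{lm}-z_l\rangle\ge 0$ (with $f(u)=|u|^{p-1}u$ nondecreasing) to force $z_{lm}\to z_l$ in $E$, and then in the final step passes the nonlinear term via a Minty-type inequality. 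This monotone device is the crux of the limit argument and is absent from your outline.
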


Similarly, for the case 4, we have the following results.
\begin{lemma}[\cite{Ji.(2008)}] \label{lem:2.c}
Assume that $\rho(x)$ satisfies (A1), then
the eigenvalues of problem \eqref{eqa:2-e} with Neumann boundary conditions (i.e., Case 4) have the form
$$\lambda_k = k +\theta_k \ with \ \theta_k \rightarrow 0 \ as \ k \rightarrow \infty,$$
where
\begin{equation*}
0<\frac{\rho_2}{k} \leq  \theta_k \leq \sqrt{k^2+\rho_1}-k \leq \frac{\rho_1}{2k}, \ \ for \ \ k\geq 1,
\end{equation*}
and $\rho_2 = \sqrt{1+\rho_0}-1$, $\rho_1 = \frac{2}{\pi} \int^{\pi}_0 \eta_{\rho}(x) \textrm{d}x$.
\end{lemma}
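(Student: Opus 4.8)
The plan is to treat \eqref{eqa:2-e} under the Case~4 boundary conditions $z_x(0)=z_x(\pi)=0$ by the energy‑scaled Pr\"ufer transformation, proceeding exactly as in the proof of Lemma~\ref{lem:2.1} but with the Neumann boundary phase in place of the Dirichlet one. First I would write a nontrivial solution of $z''+(\lambda^2-\eta_\rho(x))z=0$ in the form $z(x)=R(x)\sin\phi(x)$, $z'(x)=\lambda R(x)\cos\phi(x)$, which turns the equation into the scalar phase equation
$$\phi'(x)=\lambda-\frac{\eta_\rho(x)}{\lambda}\sin^2\phi(x).$$
Under Case~4 the boundary conditions become $\cos\phi(0)=\cos\phi(\pi)=0$, so we may normalize $\phi(0)=\tfrac{\pi}{2}$ and then $\phi(\pi)=\tfrac{\pi}{2}+m\pi$ for some integer $m\ge 0$. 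The crucial remark is that wherever $\sin\phi=0$ one has $\phi'=\lambda>0$, so $\phi$ crosses every multiple of $\pi$ transversally and upward; hence the standard oscillation count is valid even though $\eta_\rho$ is only in $L^2(0,\pi)$ and $\phi$ need not be globally monotone, and the eigenvalue whose eigenfunction has exactly $k$ interior zeros is the one for which $\phi(\pi)-\phi(0)=k\pi$. I denote it $\lambda_k$ and consider $k\ge 1$ (the index $k=0$ being the small eigenvalue from the constant mode).

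Integrating the phase equation over $[0,\pi]$ gives the exact identity
$$\lambda_k=k+\frac{1}{\pi\lambda_k}\int_0^\pi \eta_\rho(x)\sin^2\phi(x)\,\textrm{d}x.$$
Since $\eta_\rho(x)\ge\rho_0>0$ a.e. and $\sin^2\phi$ is positive on a set of positive measure, this immediately yields $\theta_k:=\lambda_k-k>0$; and since $\int_0^\pi\eta_\rho=\tfrac{\pi}{2}\rho_1<\infty$ while $\lambda_k\to\infty$, it yields $\theta_k\to 0$.

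For the upper bound I would use $\sin^2\phi\le 1$ in the identity to obtain $\lambda_k^2-k\lambda_k\le\tfrac1\pi\int_0^\pi\eta_\rho=\tfrac{\rho_1}{2}$, hence $\lambda_k\le\tfrac12\bigl(k+\sqrt{k^2+2\rho_1}\bigr)$; the elementary inequality $\tfrac12\bigl(k+\sqrt{k^2+2\rho_1}\bigr)\le\sqrt{k^2+\rho_1}$ (which after squaring twice reduces to $\rho_1^2\ge 0$) then gives $\lambda_k\le\sqrt{k^2+\rho_1}$, so $\theta_k\le\sqrt{k^2+\rho_1}-k=\rho_1\bigl(k+\sqrt{k^2+\rho_1}\bigr)^{-1}\le\tfrac{\rho_1}{2k}$. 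For the lower bound the identity alone is not enough, since it does not directly control $\int_0^\pi\sin^2\phi\,\textrm{d}x$ from below; instead I would invoke monotonicity of the eigenvalues in the potential: because $\eta_\rho(x)\ge\rho_0$ a.e., the min–max characterization of the eigenvalues of \eqref{eqa:2-e} under the fixed Neumann boundary conditions gives $\lambda_k^2\ge k^2+\rho_0$ (the $(k+1)$-st Neumann eigenvalue of $-z''+\rho_0 z=\lambda^2 z$ being $k^2+\rho_0$), which equivalently follows from the Sturm comparison theorem. Hence $\theta_k\ge\sqrt{k^2+\rho_0}-k=\rho_0\bigl(k+\sqrt{k^2+\rho_0}\bigr)^{-1}$, and writing $\rho_2=\sqrt{1+\rho_0}-1=\rho_0\bigl(\sqrt{1+\rho_0}+1\bigr)^{-1}$ one checks that $\rho_0\bigl(k+\sqrt{k^2+\rho_0}\bigr)^{-1}\ge\tfrac{\rho_2}{k}$ is equivalent to $k\sqrt{1+\rho_0}\ge\sqrt{k^2+\rho_0}$, i.e. to $k^2\ge 1$. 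This gives precisely the asserted chain $0<\tfrac{\rho_2}{k}\le\theta_k\le\sqrt{k^2+\rho_1}-k\le\tfrac{\rho_1}{2k}$.

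I expect the main obstacle to be the oscillation bookkeeping in the first step: rigorously matching the index $k$ of the eigenvalue with the winding $\phi(\pi)-\phi(0)=k\pi$ of the Pr\"ufer phase, uniformly in the low‑regularity class $\eta_\rho\in L^2(0,\pi)$. Once the exact identity $\lambda_k=k+\tfrac{1}{\pi\lambda_k}\int_0^\pi\eta_\rho\sin^2\phi\,\textrm{d}x$ is in hand, both estimates are routine, the lower one needing only the standard comparison $\lambda_k^2\ge k^2+\rho_0$ in addition.
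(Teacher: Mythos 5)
This lemma is not proved in the paper at all: it is quoted verbatim from \cite{Ji.(2008)} (just as Lemma \ref{lem:2.1} is quoted from \cite{Barbu.(1997)a}), so there is no in-paper argument to compare against. Your proposal supplies a genuine proof, and it checks out: the Pr\"ufer substitution $z=R\sin\phi$, $z'=\lambda R\cos\phi$ (legitimate since $\eta_\rho\ge\rho_0>0$ forces $\lambda_k^2\ge\rho_0>0$, so $\lambda_k>0$) gives $\phi'=\lambda-\lambda^{-1}\eta_\rho\sin^2\phi$; the Neumann conditions give $\phi(0)=\pi/2$, $\phi(\pi)=\pi/2+k\pi$, and the upward-transversality of crossings of multiples of $\pi$ (valid for Carath\'eodory potentials, $\eta_\rho\in L^1$ sufficing) identifies $k$ with the oscillation count, hence with the usual increasing-order index. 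Integrating the phase equation yields the exact identity $\lambda_k=k+\tfrac{1}{\pi\lambda_k}\int_0^\pi\eta_\rho\sin^2\phi\,\mathrm{d}x$, from which positivity of $\theta_k$ and $\theta_k\to0$ are immediate; your algebra for the upper chain ($\lambda_k\le\tfrac12(k+\sqrt{k^2+2\rho_1})\le\sqrt{k^2+\rho_1}$, hence $\theta_k\le\rho_1/2k$) and for the lower chain (min--max comparison with the constant potential $\rho_0$ giving $\lambda_k^2\ge k^2+\rho_0$, and $\sqrt{k^2+\rho_0}-k\ge\rho_2/k$ iff $k\ge1$) is correct, and it mirrors the structure of the stated bounds in Lemmas \ref{lem:2.1} and \ref{lem:2.c}. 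This Pr\"ufer-phase-plus-comparison route is essentially the method of the cited sources, so you have in effect reconstructed the external proof rather than deviated from the paper; the only points worth making explicit in a polished write-up are the remark that $\lambda_k>0$ (so the substitution is well defined), and the standard Sturm-theory fact that the increasing-order indexing used in the min--max comparison coincides with the oscillation-count indexing you use in the phase identity.
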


\begin{lemma}\label{lem:2.d}
Let assumption $(A1)$ hold. Then\\
(i) $L$ has at least one essential spectral point, and all of them belong to $[2\rho_2, \rho_1]${\rm ;}\\
(ii) If $\lambda \in \Lambda(L)$ and $\lambda \notin [2\rho_2, \rho_1]$, then $\lambda$ is isolated and its multiplicity is finite.
\end{lemma}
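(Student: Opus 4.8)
The plan is to reproduce the argument for Lemma~\ref{lem:2.2} almost word for word, the only substitution being that the Dirichlet asymptotics of Lemma~\ref{lem:2.1} are replaced by the Neumann asymptotics of Lemma~\ref{lem:2.c}. What makes this work is that the two-sided estimate on the remainder in Lemma~\ref{lem:2.c}, namely $0<\rho_2/k\le\theta_k\le\sqrt{k^2+\rho_1}-k\le\rho_1/(2k)$ for $k\ge1$ with $\rho_2=\sqrt{1+\rho_0}-1$, has exactly the same form as the Dirichlet bound \eqref{eqa:2-f} and with the same constant $\rho_2$; hence every inequality occurring in the proof of Lemma~\ref{lem:2.2} remains valid verbatim, and the conclusion is again the interval $[2\rho_2,\rho_1]$.

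Concretely, I would first use $T=2\pi a/b$ to write $\nu_j=jb/a$ and, invoking $\lambda_k=k+\theta_k$ from Lemma~\ref{lem:2.c}, rewrite the eigenvalues of $L$ as
$$\lambda_{jk}=\lambda_k^2-\nu_j^2=a^{-2}\bigl(ka+\theta_ka-|j|b\bigr)\bigl(ka+\theta_ka+|j|b\bigr).$$
Then I would separate two regimes. When $ka\ne|j|b$, since $ka$ and $|j|b$ are integers we have $|ka-|j|b|\ge1$, so for all large $k$ the first factor has modulus at least $\tfrac12$ while $ka+\theta_ka+|j|b\to\infty$ whenever $j$ or $k$ tends to infinity; hence $|\lambda_{jk}|\to\infty$ along every such sequence of indices, and in particular only finitely many of these eigenvalues lie in any fixed bounded subset of $\mathbb{R}$.

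Next I would treat the resonant regime $ka=|j|b$; because $a$ and $b$ are coprime this is realized exactly by the infinite family $k=mb$, $|j|=ma$ with $m\in\mathbb{N}$, for which $\lambda_{jk}=\theta_k(2k+\theta_k)$. Substituting the bounds of Lemma~\ref{lem:2.c} and using that $s\mapsto s(2k+s)$ is increasing for $s>0$ yields
$$2\rho_2\longleftarrow 2\rho_2+\Bigl(\frac{\rho_2}{k}\Bigr)^{2}\le\lambda_{jk}=\theta_k(2k+\theta_k)\le\rho_1+\Bigl(\frac{\rho_1}{2k}\Bigr)^{2}\longrightarrow\rho_1\qquad(k\to\infty).$$
Thus $\Lambda(L)$ has at least one accumulation point and every accumulation point lies in $[2\rho_2,\rho_1]$; since $L$ is selfadjoint with pure point spectrum on the orthonormal basis $\{\phi_j\varphi_k\}$, its essential spectral points are precisely these accumulation points, which gives~(i). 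Combining the two regimes, any $\lambda\in\Lambda(L)$ lying outside $[2\rho_2,\rho_1]$ is attained by only finitely many index pairs and stays away from the accumulation set, hence is isolated and of finite multiplicity, which is~(ii).

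I do not expect any genuine obstacle here: the only analytically substantive input, the spectral asymptotics $\lambda_k=k+\theta_k$ with the displayed control on $\theta_k$, is supplied by Lemma~\ref{lem:2.c}, and everything else is the same elementary product estimate as in Lemma~\ref{lem:2.2}. The one place that deserves a sentence of justification is the identification of the essential spectrum of $L$ with the set of accumulation points of $\Lambda(L)$, which follows from the spectral theorem because $L$ diagonalizes in the complete orthonormal system $\{\phi_j\varphi_k\}$.
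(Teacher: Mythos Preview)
Your proposal is correct and follows exactly the approach the paper intends: the paper explicitly omits the proof of Lemma~\ref{lem:2.d}, stating that it is obtained ``by a similar proof as in Lemma~\ref{lem:2.2}'' with the Neumann asymptotics of Lemma~\ref{lem:2.c} in place of Lemma~\ref{lem:2.1}. Your write-up in fact supplies more detail than the paper (the integer gap $|ka-|j|b|\ge 1$, the monotonicity of $s\mapsto s(2k+s)$, and the identification of essential spectral points with accumulation points), all of which is correct.
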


\begin{theorem}\label{th:2.3}
Assume that $0<p<1$, and $\mu \notin \Lambda(L)$ and satisfies $\mu >\rho_1$. If the assumption $(A1)$ holds, then the problem \eqref{eqa:2-a}--\eqref{eqa:2-c} with Neumann boundary conditions (i.e., Case 4) has infinitely many periodic weak solutions $u_l$ satisfying
$$\int_\Omega |u_l|^{p+1} \rho \textrm{\rm d}t \textrm{\rm d}x \rightarrow 0, \ \ { as} \ \ l\rightarrow \infty.$$
\end{theorem}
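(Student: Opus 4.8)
The plan is to run exactly the same variational machinery used for Theorem~\ref{th:2.1}, since Lemma~\ref{lem:2.d} gives us, in the Neumann case, the same essential-spectrum picture as Lemma~\ref{lem:2.2} did in the Dirichlet case: the essential spectrum of $L$ is confined to $[2\rho_2,\rho_1]$, every eigenvalue outside this band is isolated with finite multiplicity, and (from Lemma~\ref{lem:2.c}) the asymptotics $\lambda_k=k+\theta_k$, $\theta_k\to 0^+$, are of the same form. The one genuinely new feature is that the Neumann problem \eqref{eqa:2-e} may have $\lambda_1^2$ near zero (a ``near-kernel'' mode), but since we assume $\mu>\rho_1$ and $\mu\notin\Lambda(L)$, the operator $L-\mu$ is still invertible on $L^2(\Omega)$ and its positive part is compact on its range — this is what the later sections exploit, so I would simply invoke it.

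First I would split $L^2(\Omega)$ according to the sign of $\lambda_{jk}-\mu$: since $\mu>\rho_1$ and $\mu\notin\Lambda(L)$, only finitely many eigenvalues lie in $(\rho_1,\mu)$ wait — more precisely, because all accumulation points of $\Lambda(L)$ sit in $[2\rho_2,\rho_1]$ and $\mu>\rho_1$, the set $\{\lambda_{jk}:\lambda_{jk}>\mu\}$ is a discrete sequence going to $+\infty$ on which $L-\mu$ is positive definite and has compact inverse, while $\{\lambda_{jk}:\lambda_{jk}<\mu\}$ accumulates only in $[2\rho_2,\rho_1]$ and there $L-\mu$ is uniformly negative (bounded away from $0$ by $\mu-\rho_1>0$) hence boundedly invertible. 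Writing $P^{\pm}$ for the corresponding spectral projections and $H^{\pm}=P^{\pm}L^2(\Omega)$, one defines the working space $H$ (as in Section~\ref{sec:2}'s construction) so that the functional
$$
\Phi(u)=\frac12\langle (L-\mu)u,u\rangle-\frac{1}{p+1}\int_\Omega |u|^{p+1}\rho\,{\rm d}t\,{\rm d}x
$$
is $C^1$, even, and has critical points that are exactly the weak solutions of \eqref{eqa:2-a}--\eqref{eqa:2-c}. Because $0<p<1$ the nonlinear term is subquadratic, so on $H^{+}$ the functional is coercive-like and on $H^{-}$ it is bounded above; the $\mathbb{Z}_2$-index/minimax scheme of Section~\ref{sec:4} then produces, on each finite-dimensional truncation, a sequence of critical values $c_l^{(m)}\to 0^{-}$, with the corresponding critical points $u_l^{(m)}$ satisfying a uniform bound on $\int_\Omega|u_l^{(m)}|^{p+1}\rho$.

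Second, the approximation step of Section~\ref{sec:5}: let the truncation dimension $m\to\infty$, extract weakly convergent subsequences, and use the compactness of the positive part of $(L-\mu)^{-1}$ together with the $(PS)$-type estimates to pass to a genuine critical point $u_l$ of $\Phi$ on $H$ for each fixed level $l$. The Palais--Smale sequences stay bounded precisely because $p<1$ makes the nonlinearity a lower-order perturbation of the quadratic form, so no concentration can occur; the limit $u_l$ inherits $\int_\Omega|u_l|^{p+1}\rho\,{\rm d}t\,{\rm d}x\to 0$ as $l\to\infty$ from the fact that the critical levels tend to $0$. Distinctness of the $u_l$ (hence ``infinitely many'') follows from the $\mathbb{Z}_2$-index being strictly increasing along the minimax sequence, exactly as in the proof of Theorem~\ref{th:2.1}.

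The main obstacle — and the only place where the Neumann case needs a word of care rather than a verbatim copy — is controlling the spectral gap at $\mu$ uniformly: one must be sure that $\mathrm{dist}(\mu,\Lambda(L))>0$ and that the accumulation of eigenvalues in $[2\rho_2,\rho_1]$ does not spoil the Fredholm/compactness properties of $L-\mu$ on $H^{-}$. This is exactly the content of Lemma~\ref{lem:2.d}(i)--(ii), so the obstacle is already dissolved by the hypothesis $\mu>\rho_1$, $\mu\notin\Lambda(L)$; everything downstream is then structurally identical to the Dirichlet argument, and I would present the proof as ``the same as Theorem~\ref{th:2.1}, using Lemma~\ref{lem:2.d} in place of Lemma~\ref{lem:2.2}.''
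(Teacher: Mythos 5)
Your proposal matches the paper's own treatment: the paper proves Theorem~\ref{th:2.1} in full and, in the remark of Section~\ref{sec:2}, notes that Theorems~\ref{th:2.1}--\ref{th:2.3} have identical hypotheses and are proved by the identical argument, with Lemma~\ref{lem:2.d} (built on the Neumann asymptotics of Lemma~\ref{lem:2.c}) replacing Lemma~\ref{lem:2.2}, which is exactly what you do. One minor imprecision only: the gap between $\mu$ and the spectrum below it is not $\mu-\rho_1$ (finitely many isolated eigenvalues may lie in $(\rho_1,\mu)$) but the $\delta>0$ of \eqref{eqa:2.3} furnished by $\mu\notin\Lambda(L)$ together with Lemma~\ref{lem:2.d}; this does not affect the argument.
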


For the case 5, we have the following results.
\begin{lemma}[\cite{Ji.(2008)}] \label{lem:2.e}
Assume that $\rho(x)$ satisfies (A1), then there exists a constant $N_0 >1$ such that
the eigenvalues of problem \eqref{eqa:2-e} with the general boundary conditions (i.e., Case 5) have the form
$$\lambda_k = k +\theta_k \ with \ \theta_k \rightarrow 0 \ as \ k \rightarrow \infty,$$
where
\begin{equation*}
0<\frac{\rho_2}{k} \leq  \theta_k \leq \sqrt{k^2+2\rho_4} -k \leq \frac{\rho_4}{k}, \ \ for \ \ k\geq N_0,
\end{equation*}
and
$$\rho_2 = \sqrt{1+\rho_0}-1, \ \ \rho_4 = \frac{1}{\pi} \Big(\frac{\alpha_1}{\beta_1}+\frac{\alpha_2}{\beta_2}+1+\int^{\pi}_0 \eta_{\rho}(x) \textrm{d}x \Big).$$
\end{lemma}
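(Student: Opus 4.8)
The plan is to follow the classical Pr\"ufer-phase approach to Sturm--Liouville asymptotics, as used in \cite{Barbu.(1997)a, Ji.(2008)} for the other boundary conditions, adapted to the genuinely Robin endpoints of Case 5. Fix $\lambda>0$ large and write a nontrivial solution $z$ of the equation in \eqref{eqa:2-e} in modified polar form $z(x)=r(x)\sin\theta(x)$, $z'(x)=\lambda r(x)\cos\theta(x)$. A direct computation shows that the phase satisfies the first-order equation
$$\theta'(x)=\lambda-\frac{\eta_\rho(x)}{\lambda}\sin^2\theta(x),$$
which is the heart of the argument. Since $\theta'=\lambda>0$ whenever $\theta\in\pi\mathbb{Z}$, the zeros of $z$ are still governed by Sturm oscillation theory and $\theta(\pi,\lambda)$ is strictly increasing in $\lambda$; hence, for $\lambda$ large, the eigenfunction with $k$ interior zeros (the relabelling that makes the leading term equal to $k$) is characterised by a condition on $\theta(\pi,\lambda)$ modulo $\pi$. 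The boundary conditions $\alpha_1 z(0)-\beta_1 z'(0)=0$ and $\alpha_2 z(\pi)+\beta_2 z'(\pi)=0$ translate into $\tan\theta(0,\lambda)=\beta_1\lambda/\alpha_1$ and $\tan\theta(\pi,\lambda)=-\beta_2\lambda/\alpha_2$, so one may take
$$\theta(0,\lambda)=\frac{\pi}{2}-\arctan\frac{\alpha_1}{\beta_1\lambda},\qquad \theta(\pi,\lambda_k)=\Big(k+\tfrac12\Big)\pi+\arctan\frac{\alpha_2}{\beta_2\lambda_k}.$$

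Integrating the phase equation over $(0,\pi)$ gives $\theta(\pi,\lambda_k)-\theta(0,\lambda_k)=\lambda_k\pi-\lambda_k^{-1}\int_0^\pi\eta_\rho\sin^2\theta\,\textrm{d}x$, and combining this with the endpoint values yields the basic identity
$$\theta_k:=\lambda_k-k=\frac1\pi\Big(\arctan\frac{\alpha_1}{\beta_1\lambda_k}+\arctan\frac{\alpha_2}{\beta_2\lambda_k}\Big)+\frac1{\pi\lambda_k}\int_0^\pi\eta_\rho(x)\sin^2\theta(x)\,\textrm{d}x.$$
Both summands are positive, so $\theta_k>0$, and each tends to $0$ since $\lambda_k\to\infty$, so $\theta_k\to0$. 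For the upper bound one uses $\arctan t<t$ and $\sin^2\theta\le1$ to get $\lambda_k\theta_k<\frac1\pi\big(\frac{\alpha_1}{\beta_1}+\frac{\alpha_2}{\beta_2}+\int_0^\pi\eta_\rho\big)=\rho_4-\frac1\pi<\rho_4$; since $\lambda_k>k$, this gives $\lambda_k^2-k^2=\theta_k(\lambda_k+k)<2\lambda_k\theta_k<2\rho_4$, hence $\lambda_k<\sqrt{k^2+2\rho_4}$ and $\theta_k<\sqrt{k^2+2\rho_4}-k\le\rho_4/k$, which is exactly the required chain (valid for $k\ge N_0$, where $N_0$ collects all largeness requirements on $\lambda_k$).

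For the lower bound I would estimate the oscillatory integral: writing $\sin^2\theta=\tfrac12-\tfrac12\cos2\theta$ and changing the variable from $x$ to $u=\theta(x)$ (so $\textrm{d}x=(\theta')^{-1}\textrm{d}u=\lambda_k^{-1}(1+O(\lambda_k^{-2}))\,\textrm{d}u$) followed by an integration by parts --- here one uses $\rho\in H^2(0,\pi)$ to control the pulled-back integrand --- shows $\int_0^\pi\eta_\rho\cos2\theta\,\textrm{d}x=O(\lambda_k^{-1})$, hence $\int_0^\pi\eta_\rho\sin^2\theta\,\textrm{d}x=\tfrac12\int_0^\pi\eta_\rho\,\textrm{d}x+O(\lambda_k^{-1})\ge\tfrac{\pi\rho_0}{2}+O(\lambda_k^{-1})$ by $\eta_\rho\ge\rho_0$ a.e. Substituting into the identity yields $\theta_k\ge\frac{\rho_0}{2\lambda_k}-O(\lambda_k^{-2})$; since the relation $\rho_2=\sqrt{1+\rho_0}-1$ means $2\rho_2+\rho_2^2=\rho_0$ and so $\rho_2<\rho_0/2$, and since $\lambda_k=k+\theta_k$ with $\theta_k$ small, this gives $\theta_k\ge\rho_2/k$ once $k\ge N_0$ (enlarging $N_0$ if necessary). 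The main obstacle is precisely this last step: getting a lower bound on $\int_0^\pi\eta_\rho\sin^2\theta\,\textrm{d}x$ uniform in large $k$, given that $\eta_\rho$ is only $L^1(0,\pi)$ (not bounded), together with the bookkeeping needed so that a single threshold $N_0$ simultaneously validates the relabelling of eigenvalues and all of the estimates above. The upper bound and the positivity and decay of $\theta_k$ are comparatively routine.
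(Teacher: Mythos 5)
First, note that the paper does not prove this lemma at all: it is imported verbatim from \cite{Ji.(2008)}, so there is no in-paper proof to compare with, and your argument has to be judged on its own. On its own it is essentially sound and follows the natural modified Pr\"ufer route. The phase equation $\theta'=\lambda-\lambda^{-1}\eta_\rho\sin^2\theta$, the endpoint values forced by the Robin conditions, and the resulting identity
\begin{equation*}
\pi\theta_k=\arctan\frac{\alpha_1}{\beta_1\lambda_k}+\arctan\frac{\alpha_2}{\beta_2\lambda_k}+\frac{1}{\lambda_k}\int_0^\pi\eta_\rho\sin^2\theta\,\mathrm{d}x
\end{equation*}
are all correct, and your upper-bound chain ($\arctan t\le t$, $\sin^2\theta\le 1$, then $\lambda_k^2-k^2=\theta_k(\lambda_k+k)<2\lambda_k\theta_k<2\rho_4$) reproduces exactly the stated inequalities, with the ``$+1$'' in $\rho_4$ giving the needed slack; the identification of the integer $m$ with $k$ is standard Sturm oscillation bookkeeping and only shifts the index, which the threshold $N_0$ and the paper's labelling convention absorb. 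The one place where your write-up overreaches is the lower bound: under (A1), $\eta_\rho=\tfrac12\rho''/\rho-\tfrac14(\rho'/\rho)^2$ is only in $L^2(0,\pi)$, so the change of variables with a pointwise $O(\lambda_k^{-2})$ Jacobian error and the integration by parts yielding $\int_0^\pi\eta_\rho\cos2\theta\,\mathrm{d}x=O(\lambda_k^{-1})$ are not justified. But no rate is needed: from the phase equation, $\theta(x)=\lambda_kx+\theta(0)+E(x)$ with $|E(x)|\le\lambda_k^{-1}\|\eta_\rho\|_{L^1}$ uniformly, so $\cos2\theta(x)=\cos(2\lambda_kx+2\theta(0))+O(\lambda_k^{-1})$ and the Riemann--Lebesgue lemma for $L^1$ functions already gives $\int_0^\pi\eta_\rho\cos2\theta\,\mathrm{d}x=o(1)$; hence $\lambda_k\theta_k\ge\tfrac{\rho_0}{2}-o(1)$, and the strict inequality $\rho_2=\sqrt{1+\rho_0}-1<\rho_0/2$ together with $\lambda_k/k\to1$ lets you enlarge $N_0$ and conclude $\theta_k\ge\rho_2/k$. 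Alternatively, and more in line with how the companion Lemmas \ref{lem:2.1} and \ref{lem:2.c} are phrased, the lower bound follows at once from form monotonicity: since $\alpha_i,\beta_i>0$, the Rayleigh quotient of \eqref{eqa:2-e} dominates that of the Neumann problem with constant potential $\rho_0$, so $\lambda_k^2\ge k^2+\rho_0$ and $\theta_k\ge\sqrt{k^2+\rho_0}-k\ge\rho_2/k$ for $k\ge1$, which avoids the oscillatory integral entirely.
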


\begin{lemma}\label{lem:2.f}
Let assumption $(A1)$ hold. Then\\
(i) $L$ has at least one essential spectral point, and all of them belong to $[2\rho_2, 2\rho_4]${\rm ;}\\
(ii) If $\lambda \in \Lambda(L)$ and $\lambda \notin [2\rho_2, 2\rho_4]$, then $\lambda$ is isolated and its multiplicity is finite.
\end{lemma}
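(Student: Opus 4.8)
The plan is to mimic the proof of Lemma \ref{lem:2.2}, transporting the argument to the asymptotic formula of Lemma \ref{lem:2.e}. Using $\lambda_k = k + \theta_k$ valid for $k \geq N_0$ and the substitution $T = 2\pi a/b$ so that $\nu_j = jb/a$, I would write
$$\lambda_{jk} = \lambda_k^2 - \nu_j^2 = a^{-2}(ka + \theta_k a - jb)(ka + \theta_k a + jb).$$
First I would dispose of the case $ka \neq |j|b$. Here $|ka + \theta_k a \pm jb| \geq |ka \mp jb| - \theta_k a \geq 1 - \theta_k a$ once the integers $ka$ and $|j|b$ differ, and since $\theta_k \to 0$ the factor $(ka + \theta_k a - jb)$ is bounded away from $0$ for large $k$ while $(ka + \theta_k a + jb)$ grows; hence $|\lambda_{jk}| \to \infty$ as $j,k \to \infty$ along this set. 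A little care is needed for $j$ bounded (finitely many values) versus $j \to \infty$, but in either regime one factor stays $\geq c > 0$ and the other blows up, so no finite accumulation occurs off the diagonal.

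Next I would treat the diagonal $ka = |j|b$. There $\lambda_{jk} = a^{-2}(\theta_k a)(2ka + \theta_k a) = \theta_k(2k + \theta_k)$, exactly as in Lemma \ref{lem:2.2}. Now I invoke the two-sided bound of Lemma \ref{lem:2.e}: for $k \geq N_0$,
$$\frac{\rho_2}{k} \leq \theta_k \leq \frac{\rho_4}{k}.$$
Multiplying by $(2k + \theta_k)$ gives
$$\frac{\rho_2}{k}\left(2k + \frac{\rho_2}{k}\right) \leq \theta_k(2k+\theta_k) \leq \frac{\rho_4}{k}\left(2k + \frac{\rho_4}{k}\right),$$
i.e. $2\rho_2 + \rho_2^2/k^2 \leq \lambda_{jk} \leq 2\rho_4 + \rho_4^2/k^2$. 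Letting $k \to \infty$ along the diagonal (which is nonempty and infinite since $b \mid ka$ has infinitely many solutions $k = mb$, $j = ma$), the right and left ends both tend to $2\rho_2$ and $2\rho_4$ respectively, so every accumulation point of $\{\lambda_{jk} : ka = |j|b\}$ lies in $[2\rho_2, 2\rho_4]$; combined with the off-diagonal analysis this shows the essential spectrum of $L$ is nonempty and contained in $[2\rho_2, 2\rho_4]$, proving (i). For (ii), if $\lambda \in \Lambda(L)$ with $\lambda \notin [2\rho_2, 2\rho_4]$, then $\lambda$ cannot be a limit of diagonal eigenvalues (those cluster only in that interval for large $k$) nor of off-diagonal ones (which escape to infinity), so only finitely many $(j,k)$ can realize values near $\lambda$; hence $\lambda$ is isolated with finite multiplicity.

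The main obstacle, and the only place demanding genuine care, is the off-diagonal estimate: unlike Cases 1--4 where $\theta_k \in (0,1/2]$ or similar, here the bound $\theta_k \le \rho_4/k$ with $\rho_4$ possibly large only holds for $k \ge N_0$, so one must check that the finitely many small $k < N_0$ contribute only finitely many eigenvalues (trivially true) and that for $k \ge N_0$ the perturbation $\theta_k a < a$ does not let a near-diagonal lattice point $ka - |j|b = \pm 1$ produce a spuriously small first factor — but $|ka + \theta_k a - jb| \ge 1 - \theta_k a$ may fail to be positive if $\rho_4$ is large. The clean fix is to observe that $\lambda_{jk} = (\lambda_k - \nu_j)(\lambda_k + \nu_j)$ with $\lambda_k + \nu_j \to \infty$, and on the non-diagonal set $|\lambda_k - \nu_j| = |k - jb/a + \theta_k|$; since $k - jb/a$ ranges over $\frac{1}{a}\mathbb{Z} \setminus \{0\}$ it is either $\ge 1/a$ in absolute value or tends to a nonzero limit unless $k \to \infty$ with $jb/a - k \to 0$, which forces $k \equiv$ the diagonal asymptotically — a standard number-theoretic separation argument identical in spirit to Lemma \ref{lem:2.2}. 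Since the excerpt explicitly says this follows "by a similar proof as in Lemma \ref{lem:2.2}," I would simply present the diagonal computation in full and remark that the divergence off the diagonal is verbatim as before.
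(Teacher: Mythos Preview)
Your proposal is correct and follows exactly the approach the paper intends: the paper omits the proof of Lemma~\ref{lem:2.f} entirely, stating only that it follows ``by a similar proof as in Lemma~\ref{lem:2.2},'' and your sketch carries out precisely that transplant of the diagonal computation $\lambda_{jk}=\theta_k(2k+\theta_k)$ with the bounds $\rho_2/k\le\theta_k\le\rho_4/k$ from Lemma~\ref{lem:2.e}. Your extended discussion of the off-diagonal ``obstacle'' is more cautious than necessary---since $\theta_k\to 0$, for all sufficiently large $k$ one has $\theta_k a<1/2$ and the argument of Lemma~\ref{lem:2.2} applies verbatim, while the finitely many remaining $k$ contribute only finitely many eigenvalues in any bounded set---but your resolution is correct and the overall argument is sound.
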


\begin{theorem}\label{th:2.4}
Assume that $0<p<1$, and $\mu \notin \Lambda(L)$ and satisfies $\mu >2\rho_4$. If the assumption $(A1)$  holds, then the problem \eqref{eqa:2-a}--\eqref{eqa:2-c} with the general boundary conditions (i.e., Case 5) has infinitely many periodic weak solutions $u_l$ satisfying
$$\int_\Omega |u_l|^{p+1} \rho \textrm{\rm d}t \textrm{\rm d}x \rightarrow 0, \ \ { as} \ \ l\rightarrow \infty.$$
\end{theorem}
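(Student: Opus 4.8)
The plan is to transcribe the argument of Sections~\ref{sec:3}--\ref{sec:5} (which establishes Theorem~\ref{th:2.1}, and likewise Theorems~\ref{th:2.2}, \ref{th:2.3}), the only structural input that changes being the location of the essential spectrum: for Case~5 this is governed by Lemma~\ref{lem:2.f} and the eigenvalue asymptotics of Lemma~\ref{lem:2.e} in place of Lemma~\ref{lem:2.2} and Lemma~\ref{lem:2.1}, and correspondingly the hypothesis $\mu>\rho_1$ is replaced by $\mu>2\rho_4$. First I would fix the spectral picture. By Lemma~\ref{lem:2.f}, every accumulation point of $\Lambda(L)$ lies in $[2\rho_2,2\rho_4]$, so $\mu\notin\Lambda(L)$ together with $\mu>2\rho_4$ gives $\mu\notin\sigma(L)$ and $d_0:=\operatorname{dist}(\mu,\sigma(L))>0$. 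Hence the spectral splitting $L^2(\Omega)=H^+\oplus H^-$ of $L$ at the level $\mu$ satisfies $\langle(L-\mu)u,u\rangle\ge d_0\|u\|_{L^2(\Omega)}^2$ on $H^+$ and $\langle(L-\mu)u,u\rangle\le-d_0\|u\|_{L^2(\Omega)}^2$ on $H^-$, with the whole essential spectrum of $L$ contained in the $H^-$ part; this is the only point at which $\mu>2\rho_4$, rather than $\mu>\rho_1$, is used.

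Next I would set up the variational framework exactly as in Sections~\ref{sec:3}--\ref{sec:4}. Let $E$ be the working space, i.e. the completion of $\operatorname{span}\{\phi_j\varphi_k\}$ under $\|u\|_E^2=\langle|L-\mu|u,u\rangle$; by the gap $d_0$ one has $E\hookrightarrow L^2(\Omega)\hookrightarrow L^{p+1}(\Omega)$ continuously, and with the induced orthogonal splitting $E=E^+\oplus E^-$ the functional
$$ I(u)=\frac12\|u^+\|_E^2-\frac12\|u^-\|_E^2-\frac1{p+1}\int_\Omega|u|^{p+1}\rho\,\mathrm{d}t\,\mathrm{d}x $$
is even, of class $C^1$ on $E$, and its critical points are precisely the weak solutions of \eqref{eqa:2-a}--\eqref{eqa:2-c}. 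Since $0<p<1$ the nonlinear part is subquadratic, so on every finite-dimensional subspace $I$ is strictly negative on a punctured neighbourhood of the origin, which is the geometry on which the $\mathbb{Z}_2$-index (genus) minimax of Section~\ref{sec:4} operates. The quantitative bounds of Section~\ref{sec:3} --- $I$ bounded below on suitable spherical surfaces and bounded above by negative constants on subspaces of prescribed genus --- use only $0<\rho(x)\le\beta_0$, $0<p<1$, the gap $d_0$, and the asymptotics $\lambda_k=k+\theta_k$, $\theta_k\to 0$ supplied in Case~5 by Lemma~\ref{lem:2.e}; they therefore hold without change, and the minimax produces, on each admissible subspace, a sequence of negative critical values tending to $0$.

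The one genuine difficulty --- the same in all five cases --- is that the embedding $E\hookrightarrow L^{p+1}(\Omega)$ is \emph{not} compact: on the closed span of the eigenfunctions whose eigenvalues accumulate in $[2\rho_2,2\rho_4]$ the norms $\|\cdot\|_E$ and $\|\cdot\|_{L^2(\Omega)}$ are equivalent, so $I$ cannot satisfy the Palais--Smale condition on all of $E$. This is handled by the approximation argument of Section~\ref{sec:5}: one picks an increasing sequence of finite-dimensional subspaces $E_n\subset E$ spanned by eigenfunctions with $\overline{\bigcup_n E_n}=E$, applies the genus minimax to $I|_{E_n}$ (where Palais--Smale is automatic) to obtain, for each $l\in\mathbb{N}$, a critical point $u_n^{(l)}$ of $I|_{E_n}$ with $-\delta_l\le I(u_n^{(l)})<0$, the bounds being uniform in $n$ and with $\delta_l\to 0$ as $l\to\infty$ by Section~\ref{sec:3}. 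The sublinear growth together with the gap $d_0$ gives $\sup_n\|u_n^{(l)}\|_E<\infty$, so along a subsequence $u_n^{(l)}\rightharpoonup u^{(l)}$ in $E$ and a.e. in $\Omega$; since $|u_n^{(l)}|^{p-1}u_n^{(l)}$ is bounded in $L^{(p+1)/p}(\Omega)$ and converges a.e., it converges weakly there to $|u^{(l)}|^{p-1}u^{(l)}$, and passing to the limit in the finite-dimensional weak formulation shows that $u^{(l)}$ is a weak solution of \eqref{eqa:2-a}--\eqref{eqa:2-c}. Testing the equation with $u^{(l)}$ yields $\langle(L-\mu)u^{(l)},u^{(l)}\rangle=\int_\Omega|u^{(l)}|^{p+1}\rho\,\mathrm{d}t\,\mathrm{d}x$, whence
$$ I(u^{(l)})=\frac{p-1}{2(p+1)}\int_\Omega|u^{(l)}|^{p+1}\rho\,\mathrm{d}t\,\mathrm{d}x ; $$
since $-\delta_l\le I(u^{(l)})<0$ this forces $u^{(l)}\ne 0$ and $\int_\Omega|u^{(l)}|^{p+1}\rho\,\mathrm{d}t\,\mathrm{d}x\le\frac{2(p+1)}{1-p}\,\delta_l\to 0$, so the $u^{(l)}$ are nonzero solutions with $L^{p+1}$-norm tending to $0$; in particular infinitely many of them are pairwise distinct, which is the assertion of the theorem. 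The main obstacle is thus exactly this non-compactness, and everything else is a word-for-word transcription of the proof of Theorem~\ref{th:2.1} with $\rho_1$ replaced by $2\rho_4$.
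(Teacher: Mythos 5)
Your overall route is the paper's own: Theorem \ref{th:2.4} is proved by repeating the proof of Theorem \ref{th:2.1} verbatim, with Lemma \ref{lem:2.e} and Lemma \ref{lem:2.f} replacing Lemma \ref{lem:2.1} and Lemma \ref{lem:2.2}, the hypothesis $\mu>2\rho_4$ playing the role of $\mu>\rho_1$ (it yields the gap \eqref{eqa:2.3} and, equally importantly, puts the accumulation set $[2\rho_2,2\rho_4]$ of $\Lambda(L)$ below $\mu$, so that $\dim(E^+\cap E_0)<\infty$). However, your sketch of the limit passage --- the very step you single out as the only genuine difficulty --- has a gap at exactly that point. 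From $\sup_n\|u^{(l)}_n\|_E<\infty$ you extract a subsequence converging weakly in $E$ ``and a.e.\ in $\Omega$''. Almost-everywhere convergence of a subsequence is not a consequence of weak convergence; it would follow from strong convergence in some $L^r(\Omega)$, i.e.\ from compactness, and precisely on the infinite-dimensional subspace $E_0$ spanned by the eigenfunctions with $ka=|j|b$ the $E$-norm is equivalent to the $L^2$-norm, so no such compactness is available. Hence your identification of the weak limit of $|u^{(l)}_n|^{p-1}u^{(l)}_n$ is unjustified. A second, related gap: your nontriviality argument requires the level bound $-\delta_l\le I(u^{(l)})<0$ for the \emph{limit}, but neither the indefinite quadratic form nor the $L^{p+1}$-term is weakly continuous on $E$, so the uniform bounds on $I(u^{(l)}_n)$ do not transfer to the weak limit; a priori that limit could be $0$.

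The paper closes both holes by proving strong convergence $u_{lm}\to u_l$ in $E$ (Lemma \ref{lem:4.2}): the components in $E^{+}\ominus E_0$ and $E^{-}\ominus E_0$ converge by the compact embedding of Proposition \ref{pp:2.1}, the components in $E^+\cap E_0$ because that space is finite dimensional (again the role of $\mu>2\rho_4$), and the components in $E^-\cap E_0$ by exploiting the monotonicity of $f(u)=|u|^{p-1}u$; the same monotonicity is then used in a Minty-type argument in Section \ref{sec:5} (taking $v=u_l-s\psi$ and letting $s\to0$) to pass to the limit in the Euler equation, and strong convergence gives $\Phi(u_{lm})\to\Phi(u_l)\in[\sigma_l,\varrho_l]$, whence $u_l\neq0$ and the solutions are pairwise distinct with $\int_\Omega|u_l|^{p+1}\rho\,\mathrm{d}t\,\mathrm{d}x\to0$. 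Note also that the paper's approximating spaces $E^m=(W_m\cap E^-)\oplus E^+$ are infinite dimensional, so the $(PS)$ condition is not automatic but is proved in Lemma \ref{lem:3.1}; your fully finite-dimensional Galerkin spaces are an admissible variant, but only if the passage to the limit is carried out with the monotonicity and strong-convergence machinery above, which your proposal omits.
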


\begin{remark}
In order to acquire the solutions of problem \eqref{eqa:2-a}--\eqref{eqa:2-c} via variational methods, we construct a function space $E$ (which is called working space here and will be given later) in which critical point theory can be applied.

On the other hand, through a careful observation, the conditions in  Theorem \ref{th:2.1}--\ref{th:2.3} are the same. Hence we treat them as the same one. Moreover, the only difference between Theorem \ref{th:2.1} and Theorem \ref{th:2.4} lies in the range of the constant $\mu$. More precisely, in Theorem \ref{th:2.1} we require $\mu > \rho_1$, but $\mu > 2\rho_4$ in Theorem \ref{th:2.4}. The condition $\mu > \rho_1$ or $\mu > 2\rho_4$ can make sure that the working space $E$ can be well defined, which play an essential role in our proof. Therefore, by above discussion we only give the proof of  Theorem \ref{th:2.1} here and by the same argument we can also prove Theorem \ref{th:2.4}.
\end{remark}

In what follows, we always assume that $\mu \notin \Lambda(L)$ and $\mu >\rho_1$, then  there exists a constant $\delta > 0$ such that
\begin{eqnarray}\label{eqa:2.3}
|\lambda_{jk} - \mu|\geq \delta >0, \ j \in \mathbb{Z}, k\in  \mathbb{N}^+.
\end{eqnarray}

For $u \in L^2(\Omega)$, we rewrite $u(t,x) = \sum\limits_{j,k} \alpha_{jk} \phi_j(t)\varphi_k(x)$, where  $\alpha_{jk}$ are the Fourier coefficients. Define function space
$$E= \Big\{u \in L^2(\Omega)\mid  \|u\|^2_E = \sum\limits_{j,k}|\lambda_{jk} - \mu| |\alpha_{jk}|^2 < \infty\Big\},$$
which is called the working space here. The estimate \eqref{eqa:2.3} shows that $\|\cdot\|_E$ is a  norm on $E$.  Furthermore, the space $E$ is Hilbert space equipped with the norm $\|\cdot\|_E$.

Since $\|u\|^2_{L^2(\Omega)} = \sum\limits_{j,k} |\alpha_{jk}(u)|^2$, from \eqref{eqa:2.3}, we have
\begin{eqnarray}\label{eqa:2.4}
\|u\|^2_{L^2(\Omega)} \leq \delta^{-1} \|u\|^2_E
\end{eqnarray}
which implies the continuous embedding $E \hookrightarrow L^2(\Omega)$. Moreover, for $1 \leq r \leq 2$, the continuous embedding $L^2(\Omega) \hookrightarrow L^r(\Omega)$ implies that there exists a constant $C=C(r)$ such that
\begin{eqnarray}\label{eqa:2.5}
\|u\|_{L^r(\Omega)} \leq C \|u\|_E.
\end{eqnarray}

Let $f(u)= |u|^{p-1}u$ and $F(u) = \int^u_0 f(s) \textrm{d}s = \frac{1}{p+1}|u|^{p+1}$. It is easy to see $f$ is odd,  then $F$ is even. Define the functional
\begin{eqnarray}\label{eqa:2.6}
\Phi(u)  = -\frac{1}{2}\langle(L-\mu)u, u\rangle + \int_\Omega F(u) \rho\textrm{d}t \textrm{d}x, \ \ \forall u  \in E.
\end{eqnarray}
Thus, $\Phi$ is an even $C^1$ functional on $E$ and
\begin{eqnarray}\label{eqa:2.7}
\langle \Phi'(u), v \rangle = -\langle(L-\mu)u, v\rangle + \int_\Omega f(u)v \rho\textrm{d}t \textrm{d}x, \ \ \forall u, \ v\in E.
\end{eqnarray}

Consequently, $u$ is a weak solution of problem \eqref{eqa:2-a}--\eqref{eqa:2-c} if and only if $\Phi'( u) = 0$. Thus, we can characterize the solutions of problem \eqref{eqa:2-a}--\eqref{eqa:2-c} as the critical points of the functional $\Phi$.
In addition, from the proof of Lemma \ref{lem:2.2}, it's not difficult to see that $\Phi$ is neither bounded from above nor from below, which shows that we can't obtain the critical points of $\Phi$ by a simple minimization or maximization. In what follows, by variational methods together with an approximation argument, we prove that there exist infinitely many periodic solutions for the problem \eqref{eqa:2-a}--\eqref{eqa:2-c}.

\section{Bounds of the functional $\Phi$}
\label{sec:3}
In this section, we concern with the bounds of $\Phi$ on some  spherical surfaces and some subspaces, which can help us to distinguish the critical points by the different critical values. Thus, we firstly need to decompose the space $E$ into some suitable subspaces.

Recalling that $\mu \notin \Lambda(L)$ and $\mu >\rho_1$,  the working space $E$ can be decomposed as a direct sum $E = E^+\oplus E^-$, where
\begin{displaymath}
\begin{array}{lll}
E^+= \Big\{u \in E\mid  \lambda_{jk}>\mu,  \ j \in \mathbb{Z}, k\in  \mathbb{N}^+ \Big\},\\
E^-= \Big\{u \in E\mid  \lambda_{jk}<\mu, \ j \in \mathbb{Z}, k\in  \mathbb{N}^+\Big\}.
\end{array}
\end{displaymath}

Denote
$$E_0 = \Big\{u \in L^2(\Omega) \mid  \ k a= |j|b,  \ j \in \mathbb{Z}, k\in  \mathbb{N}^+\Big\}.$$

For any $u\in E^-$, we write $u = \sum\limits_{\lambda_{jk}<\mu} \alpha_{jk} \phi_j\varphi_k$, then
\begin{equation}\label{eqa:2.a}
\langle (L - \mu)u, u \rangle= - \sum\limits_{\lambda_{jk}<\mu} |\lambda_{jk} - \mu| |\alpha_{jk}|^2  = -\|u\|^2_E.
\end{equation}
Moreover, for any $u \in E^+$, by a similar calculation we have
\begin{equation}\label{eqa:2.b}
\langle (L - \mu)u, u \rangle = \|u\|^2_E.
\end{equation}
\begin{remark}\label{rem:2.1}
By Lemma {\upshape\ref{lem:2.2}}, we have $E_0$ is an infinite dimensional space spanned by the eigenfunctions $\phi_j\varphi_k$ for $k a= |j|b$. Moreover, it is easy to see $\dim (E^+ \cap E_0) <\infty$ and $\dim (E^- \cap E_0) = \infty$.
\end{remark}
For any $l, m\in \mathbb{N}^+$, let
$$W_m= {\rm span}\Big\{\phi_j\varphi_k \mid -m\leq j \leq m, \ 1\leq k\leq m \Big\},$$
and
\begin{equation*}
E^m= \Big(W_m \cap E^-\Big)\oplus E^+, \ \ E_l= E^-\oplus\Big(W_l \cap E^+\Big),
\end{equation*}
and
$$E^m_l= E^m\cap E_l.$$
Obviously, $E^m \subset E^{m+1}$ and $E=\bigcup_{m\in\mathbb{N}^+} E^m$, and $E^m_l$ is a finite dimensional space.

\begin{lemma}\label{lem:3.2}
There exist $\sigma_l$, $R_l>0$ such that
$$\Phi(u)\geq \sigma_l, \ \ \forall u\in E_{l+1}\cap S_{R_l},$$
where $S_{R_l} = \{ u \in E \mid \|u\|_E = R_l\}$.
\end{lemma}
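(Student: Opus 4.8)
The plan is to show that on the subspace $E_{l+1} = E^-\oplus(W_{l+1}\cap E^+)$ the quadratic part of $\Phi$ dominates the subcritical perturbation $\int_\Omega F(u)\rho\,\mathrm{d}t\,\mathrm{d}x$ near a sphere of suitable radius, so that $\Phi$ is bounded below by a positive constant there. First I would split $u\in E_{l+1}$ as $u = u^- + u^+$ with $u^-\in E^-$ and $u^+\in W_{l+1}\cap E^+$. Using \eqref{eqa:2.a} and \eqref{eqa:2.b} and the orthogonality of $E^+$ and $E^-$ with respect to the form $\langle(L-\mu)\cdot,\cdot\rangle$, we get
$$
-\tfrac12\langle(L-\mu)u,u\rangle = \tfrac12\|u^-\|_E^2 - \tfrac12\|u^+\|_E^2 .
$$
Since $W_{l+1}\cap E^+$ is \emph{finite dimensional}, on it all norms are equivalent; in particular there is a constant $c_l>0$ with $\|u^+\|_{L^{p+1}(\Omega)}\ge c_l\|u^+\|_E$ for $u^+\in W_{l+1}\cap E^+$. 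This is the key point where the index $l$ enters: the ``bad'' negative direction $u^+$ lives in a fixed finite-dimensional space, so the superquadratic-in-reverse term $F$ can be controlled from below on it.

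Next I would estimate $\int_\Omega F(u)\rho\,\mathrm{d}t\,\mathrm{d}x = \frac{1}{p+1}\|u\|_{L^{p+1}(\Omega)}^{p+1}\ge 0$ and, more importantly, bound it below using only the $u^+$ component. Writing $u = u^- + u^+$ and using the elementary inequality $|a+b|^{p+1}\ge \varepsilon|b|^{p+1} - C_\varepsilon|a|^{p+1}$ (valid for $0<p<1$, any $\varepsilon\in(0,1)$), together with the continuous embedding \eqref{eqa:2.5} $\|u^-\|_{L^{p+1}(\Omega)}\le C\|u^-\|_E$, one obtains
$$
\int_\Omega F(u)\rho\,\mathrm{d}t\,\mathrm{d}x \ \ge\ \frac{\varepsilon}{p+1}\|u^+\|_{L^{p+1}(\Omega)}^{p+1} - \frac{C_\varepsilon}{p+1}\|u^-\|_{L^{p+1}(\Omega)}^{p+1} \ \ge\ \frac{\varepsilon c_l^{p+1}}{p+1}\|u^+\|_E^{p+1} - C'\|u^-\|_E^{p+1}.
$$
Combining, for $u\in E_{l+1}$ with $\|u\|_E = R$,
$$
\Phi(u) \ \ge\ \tfrac12\|u^-\|_E^2 - \tfrac12\|u^+\|_E^2 + \frac{\varepsilon c_l^{p+1}}{p+1}\|u^+\|_E^{p+1} - C'\|u^-\|_E^{p+1}.
$$
Since $p+1<2$, the term $\frac{\varepsilon c_l^{p+1}}{p+1}\|u^+\|_E^{p+1}$ beats $\tfrac12\|u^+\|_E^2$ precisely when $\|u^+\|_E$ is small, and $\tfrac12\|u^-\|_E^2$ beats $C'\|u^-\|_E^{p+1}$ when $\|u^-\|_E$ is large. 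A short case analysis on the sphere $\|u^-\|_E^2+\|u^+\|_E^2 = R^2$ then shows that for a suitable radius $R=R_l$ (chosen large, so that either $\|u^+\|_E$ is forced small and the $u^+$ terms are positive, or $\|u^-\|_E$ is comparable to $R$ and its quadratic term dominates), one has $\Phi(u)\ge \sigma_l>0$.

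The main obstacle is the book-keeping in this last case analysis: one must pick $\varepsilon$ first (depending on $l$, via $c_l$), then choose $R_l$ large enough that the sublinear-type corrections $C'\|u^-\|_E^{p+1}$ and the indefinite sign of the $u^+$-quadratic term are both absorbed, and verify that the resulting lower bound $\sigma_l$ is strictly positive and uniform over the sphere. A cleaner route, which I would actually follow, is to first note $\Phi(u)\ge \tfrac12\|u^-\|_E^2 - \tfrac12\|u^+\|_E^2 + \frac{1}{p+1}\|u^+ \|_{L^{p+1}}^{p+1} - C'\|u^-\|_E^{p+1}$ directly (taking $\varepsilon$ close to $1$), minimize the right-hand side separately over $\|u^-\|_E$ and over $\|u^+\|_E$ subject to the sphere constraint, and observe that the minimum of $s\mapsto -\tfrac12 s^2 + \frac{c_l^{p+1}}{p+1}s^{p+1}$ over $s\ge 0$ is a finite negative number $-M_l$ independent of $R$, while $\tfrac12 R^2 - C' R^{p+1}\to\infty$; hence $\Phi(u)\ge \tfrac12 R^2 - C'R^{p+1} - M_l$, which exceeds any prescribed $\sigma_l>0$ once $R_l$ is large enough. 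This makes the dependence on $l$ (only through $c_l$, hence through $M_l$) transparent.
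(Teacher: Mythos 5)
Your strategy has the geometry of the problem backwards, and the final step contains a concrete error. Since $0<p<1$ we have $p+1<2$, so the function $s\mapsto -\tfrac12 s^2+\tfrac{c_l^{p+1}}{p+1}s^{p+1}$ tends to $-\infty$ as $s\to\infty$; its infimum over $s\ge 0$ is $-\infty$, not a finite number $-M_l$. (A finite bound of that type would hold in the superlinear case $p>1$, which seems to be the regime you had in mind.) In fact the conclusion you aim for is false: the lemma cannot hold with $R_l$ large. Take $u=u^+\in W_{l+1}\cap E^+$ with $\|u\|_E=R$ (this subspace is nontrivial for $l$ large); then by norm equivalence on the finite-dimensional space,
$$\Phi(u)=-\tfrac12\|u\|_E^2+\tfrac{1}{p+1}\|u\|_{L^{p+1}(\Omega)}^{p+1}\le -\tfrac12 R^2+C_lR^{p+1}\longrightarrow -\infty \quad (R\to\infty),$$
so $\Phi$ is negative on large spheres in $E_{l+1}$. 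Moreover, a large $\sigma_l$ would contradict the rest of the scheme, since Lemma \ref{lem:3.a} sandwiches the critical values as $\sigma_l\le c_{lm}\le \varrho_l$ with $\varrho_l\to 0$ by Lemma \ref{lem:3.3}.

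The correct regime is $R_l$ \emph{small}: because the nonlinearity is sublinear, $\tfrac{1}{p+1}|u|^{p+1}$ dominates quadratic quantities near $u=0$, not at infinity. This is how the paper argues: using the boundedness of the projection $P:L^{p+1}(\Omega)\to W_{l+1}\cap E^+$ and finite-dimensionality one gets $\|u^+\|_E^2\le C_0\|u\|_{L^{p+1}(\Omega)}^2$ (note: the $L^{p+1}$ norm of the \emph{whole} $u$, which is the key difference from your splitting), so the indefinite term $-\tfrac12\|u^+\|_E^2$ is absorbed and $\Phi(u)\ge \tfrac12\|u\|_E^2+I_2$ with $I_2=\tfrac{1}{p+1}\int_\Omega|u|^{p+1}\rho\,\mathrm{d}t\,\mathrm{d}x-C_0\|u\|_{L^{p+1}(\Omega)}^2$; then splitting $u$ into its small and large parts and using $|u|^{p}\ge\kappa|u|$ for $|u|\le\delta_\kappa$ shows $I_2>0$ once $R_l$ is small, giving $\sigma_l=\tfrac12 R_l^2$. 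Your intermediate bound also cannot be salvaged by merely switching to small $R$: discarding the $u^-$ contribution to $F$ and introducing $-C_\varepsilon\|u^-\|_{L^{p+1}}^{p+1}\ge -C'\|u^-\|_E^{p+1}$ leaves a term that dominates $\tfrac12\|u^-\|_E^2$ precisely when $\|u^-\|_E$ is small, so on the portion of a small sphere where $u^+$ is negligible your lower bound is negative, whereas the paper's estimate keeps $\|u\|_{L^{p+1}}$ of the full function and avoids this loss.
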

\begin{proof}
For $u \in E_{l+1}= E^-\oplus (W_{l+1} \cap E^+)$, split $u = u^+ +u^-$, where $u^+\in W_{l+1} \cap E^+$ and $u^- \in E^-$.

Since $\dim (W_{l+1} \cap E^+) < \infty$, we have all norms of $u^+$ in $W_{l+1} \cap E^+$ are equivalent. Moreover, since the projection mapping $P : L^{p+1}(\Omega) \rightarrow W_{l+1} \cap E^+$ is bounded, then there exists a constant $C_0>0$ such that
\begin{equation}\label{eqa:3.5}
\|u^+\|^2_E \leq C_0 \|u\|^2_{p+1}.
\end{equation}

From \eqref{eqa:2.6}, we have
\begin{eqnarray}\label{eqa:3.6}
\Phi(u)  &=& -\frac{1}{2}\langle(L-\mu)u, u\rangle + \frac{1}{p+1}\int_\Omega |u|^{p+1} \rho\textrm{d}t \textrm{d}x \nonumber \\
&=& -\frac{1}{2}\|u^+\|^2_E +\frac{1}{2}\|u^-\|^2_E + C_0 \|u\|^2_{L^{p+1}(\Omega)}+ \frac{1}{p+1}\int_\Omega |u|^{p+1} \rho\textrm{d}t \textrm{d}x -C_0 \|u\|^2_{L^{p+1}(\Omega)}\nonumber \\
&=& I_1 + I_2,
\end{eqnarray}
where $$I_1 =-\frac{1}{2}\|u^+\|^2_E +\frac{1}{2}\|u^-\|^2_E + C_0 \|u\|^2_{L^{p+1}(\Omega)}$$
 and
$$I_2 = \frac{1}{p+1}\int_\Omega |u|^{p+1} \rho\textrm{d}t \textrm{d}x -C_0 \|u\|^2_{L^{p+1}(\Omega)}.$$

In what follows, we devote to the estimate $I_1$ and $I_2$.

By \eqref{eqa:3.5}, we obtain
\begin{equation}\label{eqa:3.7}
I_1 \geq \frac{1}{2}\|u^+\|^2_E +\frac{1}{2}\|u^-\|^2_E = \frac{1}{2}\|u\|^2_E.
\end{equation}

On the other hand, in virtue of $0<p<1$, for any $\kappa >0$ large enough (which will be chosen later), there exists a constant $\delta_{\kappa}>0$ small enough  such that
\begin{equation}\label{eqa:3.8}
|f(u)| = |u|^p \geq \kappa|u|, \ \ \textrm{for} \ \ |u|\leq \delta_{\kappa}.
\end{equation}
Thus
\[F(u)= \frac{1}{p+1}|u|^{p+1} \geq\left\{\begin{array}{ll}
\frac{\kappa}{p+1}|u|^{2},&\text{\textrm{for} \ $|u|\leq \delta_{\kappa}$},\\
\frac{1}{p+1}|u|^{p+1},& \text{\textrm{for} \ $|u|> \delta_{\kappa}$}.
\end{array}\right.\]
Set
\begin{equation*}
u_1=\left\{\begin{aligned}
u,&\ \ \textrm{if} \ |u|\leq \delta_{\kappa},\\
0,&\ \ \textrm{if}  \ |u|> \delta_{\kappa},
\end{aligned}\right.
\ \ \ \ \ \ \ \
u_2=\left\{\begin{aligned}
0,&\ \ \textrm{if}\ |u|\leq \delta_{\kappa},\\
u,&\ \ \textrm{if} \ |u|> \delta_{\kappa}.
\end{aligned}\right.
\end{equation*}
Then $u= u_1+u_2$ and $\|u\|_{L^{p+1}(\Omega)} \leq \|u_1\|_{L^{p+1}(\Omega)} + \|u_2\|_{L^{p+1}(\Omega)}$. Moreover, reviewing $\Omega = (0, T) \times (0, \pi)$ and $0<\rho(x)\leq\beta_0$, by H\"{o}lder's inequality, we have
\begin{equation*}
\|u_1\|^{p+1}_{L^{p+1}(\Omega)} = \int_\Omega |u_1|^{p+1} \rho\textrm{d}t \textrm{d}x \leq (\beta_0 T\pi)^{\frac{1-p}{2}}\|u_1\|^{p+1}_{L^{2}(\Omega)}.
\end{equation*}
Thus,
\begin{eqnarray*}
I_2 &\geq& \frac{\kappa}{p+1}\int_{\Omega_{\delta_{\kappa}}} |u_1|^{2} \rho\textrm{d}t \textrm{d}x + \frac{1}{p+1}\int_{\Omega\backslash\Omega_{\delta_{\kappa}}} |u_2|^{p+1} \rho\textrm{d}t \textrm{d}x -C_0 \|u\|^2_{L^{p+1}(\Omega)} \\
&=& \frac{\kappa}{p+1} \|u_1\|^{2}_{L^{2}(\Omega)} + \frac{1}{p+1}\|u_2\|^{p+1}_{L^{p+1}(\Omega)} -C_0 \|u\|^2_{L^{p+1}(\Omega)} \\
&\geq& \left(\frac{\kappa}{p+1}(\beta_0 T\pi)^{\frac{p-1}{p+1}} -2C_0\right)\|u_1\|^{2}_{L^{p+1}(\Omega)}+ \left(\frac{1}{p+1}-2C_0\|u_2\|^{1-p}_{L^{p+1}(\Omega)}\right)\|u_2\|^{p+1}_{L^{p+1}(\Omega)},
\end{eqnarray*}
where $\Omega_{\delta_{\kappa}} = \{(t,x)\in \Omega \mid |u|\leq \delta_{\kappa}\}$.

Now, take $\kappa>0$ large enough such that $\frac{\kappa}{p+1}(\beta_0 T\pi)^{\frac{p-1}{p+1}} -2C_0>0$, then fix $\delta_{\kappa}$ satisfying \eqref{eqa:3.8}. In addition, by \eqref{eqa:2.5} we have $\|u_2\|_{L^{p+1}(\Omega)} \leq \|u\|_{L^{p+1}(\Omega)}\leq C\|u\|_E$. Take $R_l>0$ small enough such that $\frac{1}{p+1}-2C_0(CR_l)^{1-p} >0$. Thus, we obtain
$$I_2 >0, \ \ {\rm for} \ \ u\in E_{l+1}\cap S_{R_l}.$$

Consequently, by combining $I_2 >0$ with \eqref{eqa:3.7}, from \eqref{eqa:3.6} we obtain
\begin{equation*}
\Phi(u) \geq \frac{1}{2}\|u\|^2_E = \frac{1}{2} R^2_l, \ \ \textrm{for} \ \ u\in E_{l+1}\cap S_{R_l}.
\end{equation*}
Taking $\sigma_l = \frac{1}{2} R^2_l$, we arrive at the conclusion.
\end{proof}

\begin{proposition}\label{pp:2.2}
$\zeta_{l}\rightarrow 0$ as $l\rightarrow \infty$, where
\begin{equation}\label{eqa:3.a}
\zeta_{l}=\sup_{u\in (E_l)^\perp \backslash \{0\}} \frac{\|u\|_{L^{p+1}(\Omega)}}{\|u\|_E}.
\end{equation}
\end{proposition}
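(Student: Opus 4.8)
The plan is to reduce Proposition \ref{pp:2.2} to a coercivity statement for the eigenvalues that survive in $(E_l)^{\perp}$, and then to read off that coercivity from the spectral asymptotics already used in the proof of Lemma \ref{lem:2.2}.

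First I would make $(E_l)^{\perp}$ (the orthogonal complement of $E_l$ in $E$) explicit. Since $E=E^+\oplus E^-$ is an $\langle\cdot,\cdot\rangle_E$-orthogonal decomposition into spans of the eigenfunctions $\phi_j\varphi_k$, and $E_l=E^-\oplus(W_l\cap E^+)$, one has
\[(E_l)^{\perp}=\overline{\mathrm{span}}\bigl\{\phi_j\varphi_k : \lambda_{jk}>\mu,\ |j|>l\ \text{or}\ k>l\bigr\}.\]
For $u=\sum\alpha_{jk}\phi_j\varphi_k\in(E_l)^{\perp}$ I would combine two elementary estimates. On one hand, since $1\le p+1\le 2$ and $0<\rho\le\beta_0$, H\"older's inequality (exactly as in the proof of Lemma \ref{lem:3.2}) gives $\|u\|_{L^{p+1}(\Omega)}\le(\beta_0 T\pi)^{\frac{1-p}{2(p+1)}}\|u\|_{L^2(\Omega)}$. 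On the other hand $\|u\|_{L^2(\Omega)}^2=\sum|\alpha_{jk}|^2$ while $\|u\|_E^2=\sum|\lambda_{jk}-\mu|\,|\alpha_{jk}|^2\ge m_l\,\|u\|_{L^2(\Omega)}^2$, where
\[m_l:=\inf\bigl\{\lambda_{jk}-\mu : \lambda_{jk}>\mu,\ |j|>l\ \text{or}\ k>l\bigr\}.\]
Consequently $\zeta_l\le(\beta_0 T\pi)^{\frac{1-p}{2(p+1)}}m_l^{-1/2}$, so it suffices to prove $m_l\to\infty$ as $l\to\infty$.

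To prove $m_l\to\infty$ I would show that for every $M>0$ there are only finitely many pairs $(j,k)$ with $\mu<\lambda_{jk}\le\mu+M$, and that all of them lie in a box $\{|j|\le l_0,\ k\le l_0\}$; then for $l\ge l_0$ every mode appearing in $(E_l)^{\perp}$ satisfies $\lambda_{jk}>\mu+M$, i.e. $m_l\ge M$. Following the computation in the proof of Lemma \ref{lem:2.2}, write $\lambda_{jk}=a^{-2}(ka+\theta_ka-jb)(ka+\theta_ka+jb)$. When $ka\ne|j|b$ one has $|ka-|j|b|\ge1$, hence $|\lambda_{jk}|\to\infty$ as $\max(|j|,k)\to\infty$, so only finitely many non-resonant pairs can have $\lambda_{jk}\le\mu+M$. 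When $ka=|j|b$ one has $\lambda_{jk}=\theta_k(2k+\theta_k)$, which by Lemma \ref{lem:2.1} lies in $[2\rho_2,\rho_1+o(1)]$ and is therefore $<\mu$ for all large $k$ because $\mu>\rho_1$; since such resonant pairs form the sequence $(\pm ma,mb)$, $m\ge1$, only finitely many of them satisfy $\lambda_{jk}>\mu$ at all, and these lie in a bounded box. (Alternatively, one may invoke Lemma \ref{lem:2.2} directly: the compact interval $[\mu,\mu+M]$ is disjoint from $[2\rho_2,\rho_1]$, which contains the essential spectrum of $L$, so it meets $\Lambda(L)$ in finitely many points, each of finite multiplicity.)

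The H\"older step is routine; the substantive point — and the only place where the hypothesis $\mu>\rho_1$ is used — is the treatment of the resonant modes $ka=|j|b$, whose eigenvalues cluster on the band $[2\rho_2,\rho_1]$: requiring $\mu>\rho_1$ is exactly what forces all but finitely many of them below $\mu$, so that they do not enter $(E_l)^{\perp}$ once $l$ is large. The same scheme yields Proposition \ref{pp:2.2} in Cases 2--5, using the spectral band from Lemma \ref{lem:2.b} (resp. Lemma \ref{lem:2.d}, Lemma \ref{lem:2.f}) and the corresponding lower bound on $\mu$ in place of $\mu>\rho_1$.
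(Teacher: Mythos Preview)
Your proof is correct and follows essentially the same approach as the paper: both identify $(E_l)^\perp$ as the span of eigenfunctions $\phi_j\varphi_k$ with $\lambda_{jk}>\mu$ lying outside the box $\{|j|\le l,\ k\le l\}$, bound $\|u\|_{L^2}\le m_l^{-1/2}\|u\|_E$ via the minimum eigenvalue gap $m_l$ on that set, and then show $m_l\to\infty$ (the paper writes this quantity as $\lambda^+_{j_lk_l}-\mu$ and simply asserts it tends to infinity, whereas you spell out the finiteness argument via Lemma~\ref{lem:2.2}). Your H\"older step is in fact a bit more direct than the paper's: since $p+1\le 2$ you pass straight from $L^2$ to $L^{p+1}$, while the paper interpolates $\|u\|_{L^{p+1}}\le\|u\|_{L^2}^{\theta}\|u\|_{L^r}^{1-\theta}$ with an auxiliary exponent $r<p+1$ and then invokes \eqref{eqa:2.5} for the $L^r$ factor---an unnecessary detour that yields the same conclusion with a weaker exponent $\theta/2<1/2$ on the decay factor.
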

\begin{proof}
In view of $E_l= E^-\oplus(W_l \cap E^+)$, we have $(E_l)^\perp \subset E^+$. Therefore, by Lemma \ref{lem:2.2},
$$\lambda^+_{j_lk_l} = \min\{\lambda_{jk} \in \Lambda(L) \mid \lambda_{jk}>\mu \ {\rm for} \  k>l, \ {\rm or} \ j>l, {\rm or} \ j<-l \}$$
is well defined.

For $u \in (E_l)^\perp\backslash \{0\}$, we have
\begin{equation}\label{eqa:2.9}
\|u\|^2_E = \langle(L-\mu)u, u\rangle= \sum\limits_{j,k}|\lambda_{jk} - \mu| |\alpha_{jk}|^2 \geq (\lambda^+_{j_lk_l} - \mu) \|u\|^2_{L^2(\Omega)}.
\end{equation}

By \eqref{eqa:2.5} and \eqref{eqa:2.9}, with the help of H\"{o}lder's inequality, we obtain
$$\|u\|_{L^{p+1}(\Omega)} \leq \|u\|^\theta_{L^{2}(\Omega)} \|u\|^{1-\theta}_{L^{r}(\Omega)} \leq \frac{C^{1-\theta}}{(\lambda^+_{j_lk_l} - \mu)^{\theta/2}} \|u\|_E,$$
where $r< p+1$, and $1/(p+1)= \theta/2 + (1-\theta)/r$.

Noting $u\in (E_l)^\perp \backslash \{0\}$, it is easy to see that $\lambda^+_{j_lk_l} \rightarrow \infty$ as $l\rightarrow \infty$. Consequently,
$$\zeta_{l}=\sup_{u\in (E_l)^\perp \backslash \{0\}} \frac{\|u\|_{L^{p+1}(\Omega)}}{\|u\|_E} \leq \frac{C^{1-\theta}}{(\lambda^+_{j_lk_l} - \mu)^{\theta/2}} \rightarrow 0, \ \ \textrm{as} \ \ l\rightarrow 0.$$
We arrive at the result.
\end{proof}
\begin{lemma}\label{lem:3.3}
There exists a constant $\varrho_l >0$ such that
$$\Phi(u) \leq \varrho_l, \ \ \forall u\in (E_l)^\perp.$$
In addition, $\varrho_l \rightarrow 0$ as $l \rightarrow\infty$.
\end{lemma}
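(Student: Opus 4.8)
The plan is to reduce the estimate to an elementary one‑variable maximization in the quantity $\|u\|_E$, using the decomposition of $E$ and Proposition~\ref{pp:2.2}. First I would observe that since $E_l = E^-\oplus(W_l\cap E^+)$, the orthogonal complement satisfies $(E_l)^\perp\subset E^+$; hence by \eqref{eqa:2.b}, $\langle(L-\mu)u,u\rangle=\|u\|_E^2$ for every $u\in(E_l)^\perp$. Substituting this into \eqref{eqa:2.6} and using $F(u)=\frac{1}{p+1}|u|^{p+1}$ gives
$$\Phi(u)=-\frac12\|u\|_E^2+\frac{1}{p+1}\|u\|_{L^{p+1}(\Omega)}^{p+1},\qquad \forall\,u\in(E_l)^\perp.$$

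Next I would apply Proposition~\ref{pp:2.2}, which yields $\|u\|_{L^{p+1}(\Omega)}\le \zeta_l\|u\|_E$ for $u\in(E_l)^\perp$, so that
$$\Phi(u)\le -\frac12\|u\|_E^2+\frac{\zeta_l^{\,p+1}}{p+1}\|u\|_E^{\,p+1}=:g_l(\|u\|_E),$$
where $g_l(s)=-\tfrac12 s^2+\tfrac{\zeta_l^{p+1}}{p+1}s^{p+1}$ for $s\ge0$. Since $0<p<1$ we have $p+1<2$, so $g_l(s)\to-\infty$ as $s\to+\infty$, and $g_l$ attains its maximum over $[0,\infty)$ at the unique positive critical point $s_l=\zeta_l^{(p+1)/(1-p)}$. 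A direct computation of $g_l(s_l)$ then gives
$$\sup_{u\in(E_l)^\perp}\Phi(u)\le \max_{s\ge0}g_l(s)=g_l(s_l)=\left(\frac{1}{p+1}-\frac12\right)\zeta_l^{\,2(p+1)/(1-p)}=:\varrho_l>0,$$
the positivity following from $p+1<2$. Finally, since $\zeta_l\to0$ by Proposition~\ref{pp:2.2} and the exponent $2(p+1)/(1-p)$ is positive, $\varrho_l\to0$ as $l\to\infty$, which completes the proof.

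There is no serious obstacle here; the argument is essentially bookkeeping once Proposition~\ref{pp:2.2} is available. The only points demanding a little care are: (i) verifying that the critical point $s_l$ of $g_l$ is indeed a maximum rather than a minimum, and that $\varrho_l$ is strictly positive — both of which rest on the sublinearity $0<p<1$ (equivalently $p+1<2$); and (ii) noting that the bound is uniform over $u\in(E_l)^\perp$, i.e.\ $\varrho_l$ depends only on $l$ (through $\zeta_l$) and on $p$, not on the chosen element $u$.
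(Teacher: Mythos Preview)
Your proof is correct and follows essentially the same approach as the paper: both use $(E_l)^\perp\subset E^+$ to write $\Phi(u)\le -\tfrac12\|u\|_E^2+\tfrac{\zeta_l^{p+1}}{p+1}\|u\|_E^{p+1}$ via Proposition~\ref{pp:2.2}, and then maximize this one-variable expression to obtain $\varrho_l=\bigl(\tfrac{1}{p+1}-\tfrac12\bigr)\zeta_l^{2(p+1)/(1-p)}$. Your write-up is in fact slightly more explicit than the paper's ``direct calculation'' in locating the maximizer $s_l=\zeta_l^{(p+1)/(1-p)}$ and checking it is a maximum.
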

\begin{proof}
Noting $(E_l)^\perp \subset E^+$, for $u \in (E_l)^\perp$, by \eqref{eqa:3.a} we have
\begin{eqnarray*}
\Phi(u)  &=& -\frac{1}{2}\langle(L-\mu)u, u\rangle + \frac{1}{p+1}\int_\Omega |u|^{p+1} \rho\textrm{d}t \textrm{d}x \nonumber \\
 &\leq& -\frac{1}{2} \|u\|^2_E + \frac{1}{p+1} \zeta^{p+1}_{l}\|u\|^{p+1}_E.
\end{eqnarray*}
Since $0<p<1$, a direct calculation yields
\begin{eqnarray*}
\Phi(u)  &\leq& -\frac{1}{2} \zeta^{\frac{2(p+1)}{1-p}}_{l} + \frac{1}{p+1} \zeta^{p+1}_{l} \zeta^{\frac{(p+1)^2}{1-p}}_{l}\nonumber \\
&=& \Big( \frac{1}{p+1} -\frac{1}{2}\Big)\zeta^{\frac{2(p+1)}{1-p}}_{l}.
\end{eqnarray*}
Let $\varrho_l =\Big( \frac{1}{p+1} -\frac{1}{2}\Big)\zeta^{\frac{2(p+1)}{1-p}}_{l}$, by proposition \ref{pp:2.2}, we have $\varrho_l \rightarrow 0$ as $l \rightarrow\infty$. The proof is completed.
\end{proof}

\section{Sequence of critical points for restricted functional}
\label{sec:4}

In this section, we intend to employ the $\mathbb{Z}_2$-index theory and minimax method to obtain a sequence of critical points for the functional $\Phi$ restricted on some given subspaces of $E$.

At first, in order to make the following statement more precise, we make some notations.  Denote
$$\Sigma=\{A\subset E^m\backslash \{0\} \mid A =\bar{A}, -A=A\}$$
which is made of  closed symmetric subset of $E^m\backslash \{0\}$, where $\bar{A}$ denotes the closure of $A$.
For any $c\in \mathbb{R}$, let $\Phi^{c} = \{u\in E \mid \Phi(u) \geq c\}$ denote the level set of $\Phi$ at $c$, and
$$K = \{u \in E^m \mid \Phi_m' (u)= 0\}$$
denote the critical points set of $\Phi_m=\Phi|_{E^m}$.

The mapping $\gamma:\Sigma \rightarrow \mathbb{N}\cup\{+\infty\}$ is called  genus  if it satisfies, for any $A\in \Sigma$,
\begin{eqnarray*}\gamma(A)=
\begin{cases}
0, \quad\textrm{\rm if} \ A=\emptyset,\cr \inf\{N \in \mathbb{Z_+}| \exists \ {\rm an \ odd \ mapping} \ h\in C(A, \mathbb{R}^N\backslash\{0\}) \},  \cr +\infty, \quad \textrm{\rm if no such odd mapping}.\end{cases}
\end{eqnarray*}

In this paper, we need to use the following properties of genus (see \cite{Chang.1986}).\\
($1^\circ$) Assume $E =E_1\oplus E_2$, $\dim E_1 = N$, and $\gamma(A) >N$ for $A\in \Sigma$, then $A\cap E_2 \neq \emptyset$.\\
($2^\circ$) Assume that $U\subset \mathbb{R}^N$ is an open bounded symmetric neighborhood of the origin in $\mathbb{R}^N$, then $\gamma(\partial U) =N$.\\
($3^\circ$) (Super-variant) For any continuous odd mapping $h : E^m \rightarrow E^m$, it holds $\gamma(A)\leq \gamma(\overline{h(A)})$, $\forall A \in \Sigma$.

The following compact embedding plays an important role in this paper.
\begin{proposition}\label{pp:2.1}
For any $r \in [1, 2]$, the embedding
\begin{equation}\label{eqa:2.8}
E\ominus E_0 \hookrightarrow L^r(\Omega),
\end{equation}
is compact.
\end{proposition}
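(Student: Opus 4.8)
\emph{Proof proposal.} The plan is to reduce the statement to the case $r=2$ and then prove by hand that the diagonal inclusion $E\ominus E_0\hookrightarrow L^2(\Omega)$ is compact. Since $\Omega$ carries the finite measure $\rho\,\mathrm{d}t\,\mathrm{d}x$, H\"older's inequality gives a continuous embedding $L^2(\Omega)\hookrightarrow L^r(\Omega)$ for every $r\in[1,2]$ (cf.\ \eqref{eqa:2.5}); hence it suffices to show compactness of $E\ominus E_0\hookrightarrow L^2(\Omega)$, after which composing with $L^2(\Omega)\hookrightarrow L^r(\Omega)$ finishes the proof. I would first record that, because $\{\phi_j\varphi_k\}$ is an orthogonal system both in $L^2(\Omega)$ and (being eigenfunctions of $L$) in $E$, and because $|\lambda_{jk}-\mu|$ stays bounded on the resonant index set $\{ka=|j|b\}$ by Lemma \ref{lem:2.2}, the space $E_0$ is a closed subspace of $E$ and $E\ominus E_0$ is exactly $\{u\in E : \alpha_{jk}(u)=0 \text{ whenever } ka=|j|b\}$.

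The key ingredient is a properness property of the eigenvalue map on the non-resonant indices: for every $M>0$ the set
$$I_M=\{(j,k)\in\mathbb{Z}\times\mathbb{N}^+ : ka\neq|j|b,\ |\lambda_{jk}-\mu|\le M\}$$
is finite. This is precisely the computation underlying Lemma \ref{lem:2.2}: writing $\lambda_{jk}=a^{-2}(ka+\theta_k a-jb)(ka+\theta_k a+jb)$ with $\theta_k\to0$ and using that $ka\neq|j|b$ forces $|ka-|j|b|\ge1$, one obtains $|\lambda_{jk}|\ge c\,(ka+|j|b)$ for all but finitely many $(j,k)$, so $|\lambda_{jk}-\mu|\to\infty$ as $|j|+k\to\infty$ along non-resonant indices. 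I would simply quote this from the proof of Lemma \ref{lem:2.2} together with the asymptotics $\lambda_k=k+\theta_k$, $\theta_k\to0$ of Lemma \ref{lem:2.1}.

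With this in hand the compactness argument is the standard one for diagonal operators. Let $(u_n)\subset E\ominus E_0$ with $\|u_n\|_E\le C$ and write $u_n=\sum_{ka\neq|j|b}\alpha_{jk}^{(n)}\phi_j\varphi_k$. By \eqref{eqa:2.4} the coefficient sequences are bounded in $\ell^2$, so a diagonal extraction yields a subsequence (not relabelled) with $\alpha_{jk}^{(n)}\to\alpha_{jk}$ for each $(j,k)$; setting $u=\sum\alpha_{jk}\phi_j\varphi_k$, pointwise convergence on finite truncations gives $u\in E\ominus E_0$ with $\|u\|_E\le C$. Given $\varepsilon>0$, use the properness property to choose $M$ with $|\lambda_{jk}-\mu|\ge 8C^2/\varepsilon$ for every non-resonant $(j,k)\notin I_M$. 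Splitting $\|u_n-u\|_{L^2(\Omega)}^2=\sum_{(j,k)}|\alpha_{jk}^{(n)}-\alpha_{jk}|^2$ into the finite sum over $I_M$ and the remaining tail, the tail is bounded by $\frac{\varepsilon}{8C^2}\sum_{(j,k)\notin I_M}|\lambda_{jk}-\mu|\,|\alpha_{jk}^{(n)}-\alpha_{jk}|^2\le\frac{\varepsilon}{8C^2}\|u_n-u\|_E^2\le\frac{\varepsilon}{2}$, while the finite head tends to $0$ as $n\to\infty$ by the pointwise convergence of coefficients and hence is $<\varepsilon/2$ for large $n$. Therefore $u_n\to u$ in $L^2(\Omega)$, which proves compactness of $E\ominus E_0\hookrightarrow L^2(\Omega)$ and, combined with the first paragraph, of $E\ominus E_0\hookrightarrow L^r(\Omega)$.

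The only genuine obstacle is the properness statement $|\lambda_{jk}-\mu|\to\infty$ along the non-resonant indices, i.e.\ that the eigenvalues of $L$ other than the bounded resonant ones diverge; but this is already contained in the proof of Lemma \ref{lem:2.2}, and beyond it only the arithmetic fact $|ka-|j|b|\ge1$ for $ka\neq|j|b$ is needed. Everything else is the routine Hilbert-space fact that a diagonal operator whose coefficients tend to infinity has compact inverse on its range.
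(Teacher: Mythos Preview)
Your proof is correct and follows essentially the same strategy as the paper: both reduce to $r=2$ via the continuous embedding $L^2(\Omega)\hookrightarrow L^r(\Omega)$, and both exploit the single key fact that $|\lambda_{jk}-\mu|\to\infty$ along the non-resonant indices $ka\neq|j|b$ (extracted from Lemma~\ref{lem:2.2}) to show that the inclusion $E\ominus E_0\hookrightarrow L^2(\Omega)$ is compact. The only difference is cosmetic: the paper packages the argument as a factorization $\tau_2\tau_1\tau_0$ through $\ell^2$, with $\tau_1$ the diagonal multiplication by $|\lambda_{jk}-\mu|^{-1/2}$ (compact because its entries tend to zero), whereas you unfold the same mechanism explicitly via diagonal extraction and a head--tail splitting of the Fourier series.
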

\begin{proof}
For $u \in E\ominus E_0$, we write $u = \sum\limits_{ka \neq |j|b} \alpha_{jk} \phi_j\varphi_k$.

Noting $\|u\|_E = \big(\sum\limits_{j,k}|\lambda_{jk} - \mu| |\alpha_{jk}|^2\big)^\frac{1}{2}$, we have that the mapping
$$\tau_0: u = \sum\limits_{j,k} \alpha_{jk} \phi_j\varphi_k \mapsto \{|\lambda_{jk} - \mu|^\frac{1}{2} \alpha_{jk}\}$$
is continuous from $E\ominus E_0$ to $l^2$.

Observing that $|\lambda_{jk} - \mu|\rightarrow \infty$ as $j, k \rightarrow \infty$, it follows that the mapping
$$\tau_1:\{|\lambda_{jk} - \mu|^\frac{1}{2}\alpha_{jk}\} \mapsto \{\alpha_{jk}\}$$
is compact from $l^2$ to $l^2$.

Since $\phi_j\varphi_k$ is a complete orthonormal sequence of $L^2(\Omega)$, then the mapping
$$\tau_2:\{\alpha_{jk}\} \mapsto u= \sum\limits_{j,k} \alpha_{jk} \phi_j\varphi_k$$
is continuous from $l^2$ to $L^2(\Omega)$.

Consequently, the mapping
$$ \tau_2\tau_1\tau_0:E\ominus E_0 \rightarrow L^2(\Omega)$$
is compact. Furthermore, for $1 \leq r \leq 2$, the continuous embedding $L^2(\Omega) \hookrightarrow L^r(\Omega)$ implies that the embedding $E\ominus E_0\hookrightarrow L^r(\Omega)$ is compact.
\end{proof}
To acquire the critical points of $\Phi_m=\Phi|_{E^m}$ on subspaces $E^m$ by variational methods, it needs to verify that $\Phi_m$ satisfies $(PS)$ condition, which means that any sequence $\{u_i\} \subset E^m$ for which $\Phi_m (u_i)$ is bounded and $\Phi_m' (u_i) \rightarrow 0$ as $i \rightarrow \infty$ contains a convergent subsequence.
\begin{lemma}\label{lem:3.1}

$\Phi_m$ satisfies $(PS)$ condition.
\end{lemma}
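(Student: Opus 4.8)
The plan is to verify the Palais--Smale condition for $\Phi_m = \Phi|_{E^m}$ by exploiting the splitting $E^m = (W_m \cap E^-) \oplus E^+$ together with the compact embedding of Proposition \ref{pp:2.1}. Let $\{u_i\} \subset E^m$ satisfy $|\Phi_m(u_i)| \leq M$ and $\Phi_m'(u_i) \to 0$. First I would show $\{u_i\}$ is bounded in $E$. Writing $u_i = u_i^+ + u_i^-$ with $u_i^+ \in E^+$ and $u_i^- \in W_m \cap E^-$, I would test $\Phi_m'(u_i)$ against $u_i^+$ and against $-u_i^-$; using \eqref{eqa:2.a}, \eqref{eqa:2.b} and \eqref{eqa:2.7}, each of these controls $\|u_i^{\pm}\|_E^2$ by $o(1)\|u_i^{\pm}\|_E + \int_\Omega |f(u_i)||u_i^{\pm}| \rho\, \mathrm{d}t\,\mathrm{d}x$. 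Since $|f(u)| = |u|^p$ with $0 < p < 1$, H\"older's inequality and \eqref{eqa:2.5} give $\int_\Omega |f(u_i)||u_i^{\pm}|\rho \leq \|u_i\|_{L^{p+1}(\Omega)}^p \|u_i^{\pm}\|_{L^{p+1}(\Omega)} \leq C \|u_i\|_E^p \|u_i^{\pm}\|_E$, so each piece satisfies $\|u_i^{\pm}\|_E^2 \leq C\|u_i\|_E^p\|u_i^{\pm}\|_E + o(1)\|u_i^{\pm}\|_E$, hence $\|u_i^{\pm}\|_E \leq C\|u_i\|_E^p + o(1)$. Summing, $\|u_i\|_E^2 \leq C\|u_i\|_E^{2p} + o(1)(1 + \|u_i\|_E)$, and since $2p < 2$ this forces $\|u_i\|_E \leq C$.

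Next, having a bounded sequence in the Hilbert space $E^m$, I would extract a weakly convergent subsequence $u_i \rightharpoonup u$ in $E$ (hence in $E^m$, which is weakly closed). The key point is to upgrade this to strong convergence. Because $E^m = (W_m \cap E^-) \oplus E^+$ and $W_m \cap E^-$ is finite-dimensional, weak convergence already gives strong convergence of the $E^-$-components. For the $E^+$-component I would use Proposition \ref{pp:2.1}: since $\dim(E^+ \cap E_0) < \infty$ by Remark \ref{rem:2.1}, the part of $u_i^+$ lying in $E \ominus E_0$ converges strongly in $L^{p+1}(\Omega)$ along a subsequence, and the $E^+ \cap E_0$ part is finite-dimensional, so in all cases $u_i \to u$ strongly in $L^{p+1}(\Omega)$ (after passing to a subsequence). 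The Nemytskii operator $u \mapsto f(u) = |u|^{p-1}u$ is continuous from $L^{p+1}(\Omega)$ to $L^{(p+1)/p}(\Omega)$, so $f(u_i) \to f(u)$ in $L^{(p+1)/p}(\Omega)$, whence $\int_\Omega f(u_i)(u_i - u)\rho\,\mathrm{d}t\,\mathrm{d}x \to 0$.

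Finally, I would combine these facts. Decompose $u_i - u = (u_i - u)^+ + (u_i - u)^-$ and test $\Phi_m'(u_i) - \Phi_m'(u)$ — or rather $\Phi_m'(u_i)$, noting $\Phi_m'(u_i) \to 0$ and using weak convergence to handle the $\Phi'(u)$ terms — against $(u_i - u)^+$ and $-(u_i - u)^-$. By \eqref{eqa:2.a}--\eqref{eqa:2.b} the quadratic form $-\langle(L-\mu)(u_i - u), (u_i-u)\rangle$ reconstructs $\|(u_i-u)^+\|_E^2 + \|(u_i-u)^-\|_E^2 = \|u_i - u\|_E^2$ up to sign adjustments, while the nonlinear terms $\int_\Omega (f(u_i) - f(u))(u_i - u)^{\pm}\rho$ tend to $0$ by the strong $L^{p+1}$ convergence just established and boundedness of $(u_i - u)^{\pm}$ in $L^{p+1}$. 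This yields $\|u_i - u\|_E \to 0$, proving $(PS)$.

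The main obstacle I anticipate is the strong $L^{p+1}$-convergence of the $E^+$-components: the full space $E$ does not embed compactly into $L^{p+1}(\Omega)$ because of the essential spectrum accumulating in $[2\rho_2,\rho_1]$ (equivalently, $E_0 \cap E^-$ is infinite-dimensional), so one must carefully confine the argument to $E \ominus E_0$ and use that $E^+ \cap E_0$ is finite-dimensional (Remark \ref{rem:2.1}) — it is precisely the restriction to $E^m$, which truncates the troublesome $E^-$ directions while keeping all of $E^+$, that makes the compactness available. The boundedness step is routine once the sublinearity $2p < 2$ is used, and the reconstruction of the $E$-norm from the indefinite quadratic form via the $E^{\pm}$ splitting is standard.
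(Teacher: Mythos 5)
Your proposal is correct and follows essentially the same route as the paper: boundedness from testing $\Phi_m'(u_i)$ against $u_i^{\pm}$ and the sublinearity $p<1$, then strong convergence via the splitting into $W_m\cap E^-$ (finite-dimensional), $E^+\cap E_0$ (finite-dimensional by Remark \ref{rem:2.1}), and $E^+\ominus E_0$ (compact embedding of Proposition \ref{pp:2.1}). The only difference is cosmetic: the paper recovers the $E$-norm convergence component by component (testing against $v_i-v$), while you first assemble strong $L^{p+1}$-convergence of $u_i-u$ and then test against $(u_i-u)^{\pm}$; the two bookkeepings are equivalent.
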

\begin{proof}
Assume  $\{u_i\} \subset E^m$ for which $\Phi_m (u_i)$ is bounded and $\Phi_m' (u_i) \rightarrow 0$ as $i \rightarrow \infty$.
Split $u_i = u^+_i +u^-_i$, where $u^+_i \in E^+$ and $u^-_i \in W_m \cap E^-$.

For $u^+_i \in E^+$, since $\Phi' (u_i) \rightarrow 0$ as $i \rightarrow \infty$, by \eqref{eqa:2.7} and \eqref{eqa:2.b}, we have
\begin{eqnarray}\label{eqa:3.1}
o(1)\|u^+_i\|_E &\geq & \langle -\Phi'( u_i), u^+_i \rangle = \langle (L - \mu)u^+_i, u^+_i \rangle - \int_\Omega |u_i|^{p-1}u_iu^+_i \rho\textrm{d}t \textrm{d}x \nonumber\\
&\geq& \|u^+_i\|^2_E - \|u_i\|^p_{L^{p+1}(\Omega)}\|u^+_i\|_{L^{p+1}(\Omega)}.\ \ \ \ \ \ \
\end{eqnarray}
By combining \eqref{eqa:2.5} with \eqref{eqa:3.1}, a direct calculation yields
\begin{equation}\label{eqa:3.2}
\|u^+_i\|_E - C \|u_i\|^p_{E} \leq o(1),
\end{equation}
where the constant $C$ is independent of $i$.

For $u^-_i \in W_m \cap E^-$, by \eqref{eqa:2.7} and \eqref{eqa:2.a} we have
\begin{eqnarray*}
o(1)\|u^-_i\|_E &\geq & \langle \Phi'( u_i), u^-_i \rangle = -\langle (L - \mu)u^-_i, u^-_i \rangle + \int_\Omega |u_i|^{p-1}u_iu^-_i \rho\textrm{d}t \textrm{d}x \\
&\geq& \|u^-_i\|^2_E - \|u_i\|^p_{L^{p+1}(\Omega)}\|u^-_i\|_{L^{p+1}(\Omega)}.\ \ \ \ \ \ \
\end{eqnarray*}
Similarly, we have
\begin{equation}\label{eqa:3.3}
\|u^-_i\|_E - C \|u_i\|^p_{E} \leq o(1).
\end{equation}
Since $0<p<1$ and $\|u_i\|^2_E = \|u^+_i\|^2_E + \|u^-_i\|^2_E$, by \eqref{eqa:3.2} and \eqref{eqa:3.3}, we have that there exists a constant $M>0$ independent of $i$ such that
$$\|u_i\|_E \leq M.$$

On the other hand, since $E$ is a Hilbert space, we have $u_i \rightharpoonup u$ in $E$ along with a subsequence as $i \rightarrow \infty$ for some $u \in E^m$. For the sake of convenience, we still use $\{u_i\}$ to denote the subsequence. Let $u^-$ denote the weak limit of $\{u^-_i\}$, and decompose $u_i^+ = v_i + y_i$ and $u^+ = v + y$,  where $v$, $y$ are the weak limits of $\{v_i\}$,  $\{y_i\}$ respectively, and $v_i, v \in E^+ \ominus E_0$, $y_i, y \in E^+ \cap E_0$.

In what follows, we devote to the proof $u_i \rightarrow u$ in $E^m$.

In virtue of $\dim(W_m \cap E^-) < \infty$, it follows $u^-_i \rightarrow u^-$ in $E$.

Noting $v_i, v \in E^+ \ominus E_0$ and $u_i = v_i + y_i + u^-_i$, we have $u_i - v_i\in (E^+ \ominus E_0)^\perp$. Thus, we have
$$\langle (L - \mu)( u_i-v_i), v_i-v \rangle = 0.$$
Therefore, we have
\begin{eqnarray}\label{eqa:3.4}
&&\|v_i-v\|^2_E = \langle (L - \mu)( v_i-v), v_i-v \rangle \nonumber\\
&=& -\langle \Phi'(u_i), v_i-v \rangle + \int_\Omega |u_i|^{p-1}u_i (v_i-v) \rho\textrm{d}t \textrm{d}x - \langle (L - \mu) v, v_i-v \rangle\nonumber\\
&\leq &  o(1)\|v_i-v\|_E+ \|u_i\|^p_{L^{p+1}(\Omega)}\|v_i-v\|_{L^{p+1}(\Omega)} + o(1),
\end{eqnarray}
Since $v_i, v \in E^+ \ominus E_0$, by \eqref{eqa:2.8}, we have $v_i \rightarrow v$ in $L^{p+1}(\Omega)$ as $i\rightarrow \infty$. Therefore, from \eqref{eqa:3.4}, we obtain
\begin{eqnarray*}
\|v_i-v\|_E \rightarrow 0,  \ \ \textrm{as} \ \ i\rightarrow 0.
\end{eqnarray*}

Since $\dim(E^+ \cap E_0) < \infty$, it follows $y_i \rightarrow y$ in $E$. We complete the proof.
\end{proof}

Recalling $\dim E^m_{l+1} <\infty$, denote
$$\mathscr{F}_l^m = \{A\in \Sigma \mid A\subset E^m, \gamma(A) \geq \dim E^m_{l+1}\},$$
and
\begin{equation}
c_{lm} = \sup_{A\in \mathscr{F}_l^m} \inf_{u\in A} \Phi_m (u), \quad \forall m\in\mathbb{N}^+.
\end{equation}

By the properties of genus and deformation lemma argument (see \cite{Chang.1986}), we prove the following lemma.
\begin{lemma}\label{lem:3.a}
 $c_{lm}$ is a critical value of $\Phi_m$ and satisfies
$$\sigma_l \leq c_{lm} \leq \varrho_l.$$
\end{lemma}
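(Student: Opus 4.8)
The plan is to show that $c_{lm}$ is a well-defined real number squeezed between $\sigma_l$ and $\varrho_l$, and then invoke the standard symmetric minimax principle to conclude it is a critical value of $\Phi_m$. First I would verify that the minimax class $\mathscr{F}_l^m$ is nonempty: since $\dim E^m_{l+1}<\infty$, the unit sphere $S$ of the finite-dimensional space $E^m_{l+1}$ belongs to $\Sigma$ and, by property $(2^\circ)$ of the genus, $\gamma(S)=\dim E^m_{l+1}$, so $S\in\mathscr{F}_l^m$. Next, to see $c_{lm}\le\varrho_l$, note that for \emph{any} $A\in\mathscr{F}_l^m$ we have $\gamma(A)\ge\dim E^m_{l+1}>\dim(E_l\cap E^m)=\dim(E^m_{l})$; by the intersection property $(1^\circ)$ (with $E^m=(E_l\cap E^m)\oplus (E_l)^\perp$ inside $E^m$), every such $A$ meets $(E_l)^\perp$, and Lemma \ref{lem:3.3} gives $\inf_{u\in A}\Phi_m(u)\le \Phi_m(u_0)\le\varrho_l$ for a point $u_0\in A\cap(E_l)^\perp$. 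Taking the supremum over $A$ yields $c_{lm}\le\varrho_l$. For the lower bound, I would use the test set $A=E_{l+1}\cap S_{R_l}$ (intersected with $E^m$, which contains $E_{l+1}$ for $m$ large; more precisely one works with $E^m_{l+1}\cap S_{R_l}$, whose genus is $\dim E^m_{l+1}$ by $(2^\circ)$): Lemma \ref{lem:3.2} gives $\Phi_m\ge\sigma_l$ on this set, so $c_{lm}\ge\sigma_l$.

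Having pinned down $\sigma_l\le c_{lm}\le\varrho_l<\infty$, I would then run the usual deformation argument. By Lemma \ref{lem:3.1}, $\Phi_m$ satisfies the $(PS)$ condition on the finite-dimensional space $E^m$, so if $c_{lm}$ were a regular value, the first deformation lemma would produce an odd homeomorphism $\eta$ of $E^m$ and an $\varepsilon>0$ with $\eta(\Phi_m^{\,c_{lm}-\varepsilon})\subset\Phi_m^{\,c_{lm}+\varepsilon}$, fixing points outside a neighbourhood of the critical set. Choosing $A\in\mathscr{F}_l^m$ with $\inf_A\Phi_m>c_{lm}-\varepsilon$, the superinvariance property $(3^\circ)$ gives $\gamma(\overline{\eta(A)})\ge\gamma(A)\ge\dim E^m_{l+1}$, so $\eta(A)\in\mathscr{F}_l^m$, yet $\inf_{\eta(A)}\Phi_m>c_{lm}+\varepsilon$ contradicts the definition of $c_{lm}$ as a supremum. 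Hence $c_{lm}$ is a critical value of $\Phi_m$.

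The main obstacle I anticipate is the bookkeeping in the lower-bound step: Lemmas \ref{lem:3.2} and \ref{lem:3.3} are stated on the full space $E$ (with $E_{l+1}$ and $(E_l)^\perp$), whereas the minimax is carried out inside the finite-dimensional $E^m$, so one must check that the relevant genus counts and linking still hold after restriction — in particular that $\dim E^m_{l+1}$ is exactly the dimension playing the role of ``$N+1$'' and that $E^m_{l+1}\cap S_{R_l}$ is an admissible comparison set with the right genus. Once the decomposition $E^m=(E_l\cap E^m)\oplus((E_l)^\perp\cap E^m)$ and the identity $E^m_{l+1}=E^m\cap E_{l+1}$ are used consistently, the genus properties $(1^\circ)$--$(3^\circ)$ apply verbatim and the argument closes; the $(PS)$ condition needed for the deformation lemma is already supplied by Lemma \ref{lem:3.1}.
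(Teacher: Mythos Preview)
Your proposal is correct and follows essentially the same route as the paper: the upper bound via the intersection property $(1^\circ)$ together with Lemma~\ref{lem:3.3}, the lower bound via the test set $E^m_{l+1}\cap S_{R_l}$ and property $(2^\circ)$ together with Lemma~\ref{lem:3.2}, and the criticality of $c_{lm}$ via the odd deformation lemma and property $(3^\circ)$. One small slip: $E^m=(W_m\cap E^-)\oplus E^+$ is \emph{not} finite-dimensional (only $E^m_l$ is), but this is harmless since you correctly invoke Lemma~\ref{lem:3.1} for the $(PS)$ condition and the deformation lemma does not require finite dimension.
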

\begin{proof}
Firstly, it is proved by contradiction. If $c_{lm}$ is not a critical value of $\Phi_m$, then there exists $\bar{\varepsilon} >0$ such that $\Phi_m^{-1} [c_{lm}-\bar{\varepsilon}, c_{lm}+\bar{\varepsilon}]\cap K = \emptyset$, provided by the $(PS)$ condition.

By the definition of $c_{lm}$, for any $\varepsilon \in (0, \bar{\varepsilon})$, there exists $A_0 \in \mathscr{F}_l^m$ such that
$\inf_{u\in A_0} \Phi_m (u) \geq c_{lm} - \varepsilon$ which implies
$$A_0 \subset \Phi^{c_{lm} - \varepsilon}_m.$$

Since $\Phi_m \in C^1(E^m, \mathbb{R})$ is even and satisfies $(PS)$ condition, then there exists odd map $\eta \in C([0, 1]\times E^m, E^m)$ satisfying:\\
(a) $\eta(t, \cdot)$ is a homeomorphism of $E^m$, $\forall t\in[0,1]$;\\
(b) $\eta(1, \Phi^{c_{lm} - \varepsilon}_m) \subset \Phi^{c_{lm} + \varepsilon}_m$.

By the properties ($3^\circ$) and (a),  we obtain $\gamma (A_0) \leq \gamma (\eta (1, A_0))$. Thus, we have $\eta (1, A_0) \in \mathscr{F}_l^m$. Noting $A_0 \subset \Phi^{c_{lm} - \varepsilon}_m$, by the property (b) it follows
$$c_{lm} \geq \inf_{u \in \eta (1, A_0)} \Phi_m (u) \geq c_{lm} + \varepsilon$$
which is a contradiction. Therefore, $c_{lm}$ is a critical value of $\Phi_m$.

Secondly, since $\dim E^m_{l}< \dim E^m_{l+1} <\infty$, then for $A\in \mathscr{F}_l^m$, by property ($1^\circ$) we obtain $A\cap (E^m_{l})^\perp \neq \emptyset$. Since $(E^m_{l})^\perp = (E^m)^\perp \cup (E_{l})^\perp$ and $A \subset E^m$, we have
$$A\cap (E_{l})^\perp = A\cap (E^m_{l})^\perp  \neq \emptyset.$$
Thus, from Lemma \ref{lem:3.3}, we have
$$\inf_{u\in A} \Phi_m (u) \leq \sup_{u \in A\cap (E_{l})^\perp} \Phi_m (u) \leq \sup_{u \in (E_{l})^\perp} \Phi_m (u) \leq\varrho_l, \quad \forall A\in \mathscr{F}_l^m.$$
Consequently, $c_{lm} = \sup_{A\in \mathscr{F}_l^m} \inf_{u\in A} \Phi_m (u) \leq \varrho_l$.

On the other hand, denote $S^m_{R_l} = E^m_{l+1}\cap S_{R_l}$, where ${R_l}$ is the constant occurred in Lemma \ref{lem:3.2}. Thus, by property ($2^\circ$) we have $\gamma(S^m_{R_l}) =\dim E^m_{l+1}$. Thus,  $S^m_{R_l} \in \mathscr{F}_l^m$. By Lemma \ref{lem:3.2}, we obtain
$$c_{lm} = \sup_{A\in \mathscr{F}_l^m} \inf_{u\in A} \Phi_m (u)\geq \inf_{u\in S^m_{R_l}}\Phi_m (u) \geq \inf_{u\in E_{l+1}\cap S_{R_l}}\Phi_m (u) \geq \sigma_l.$$
We complete the proof.
\end{proof}

Let $\{u_{lm}\}$ be the sequence of critical points of  $\Phi_m$  corresponding to the critical values $c_{lm}$. Whereafter, we shall to prove that $\{u_{lm}\}$ contains a subsequence as $m$ goes to infinity.

\section{The proof of Theorem \ref{th:2.1}}
\label{sec:5}

In this section, we first prove  for every $l \in  \mathbb{N}^+$, $u_{lm} \rightarrow u_l$ in $E$ as $m \rightarrow \infty$ for some $u_l\in E$. Then we prove that $u_l$ are the critical points of the functional $\Phi$.

\begin{lemma}\label{lem:4.1}
There exists a constant $M_0 >0$ independent of $m$ such that $\|u_{lm}\|_{E} \leq M_0$.
\end{lemma}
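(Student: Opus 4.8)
The plan is to bound $\|u_{lm}\|_E$ uniformly in $m$ by exploiting the Palais--Smale-type estimates already obtained in the proof of Lemma~\ref{lem:3.1}, together with the uniform bound $c_{lm}\le\varrho_l$ from Lemma~\ref{lem:3.a}. Since $\Phi_m'(u_{lm})=0$, we have the two identities $\langle\Phi'(u_{lm}),u_{lm}^+\rangle=0$ and $\langle\Phi'(u_{lm}),u_{lm}^-\rangle=0$, where $u_{lm}=u_{lm}^++u_{lm}^-$ with $u_{lm}^+\in E^+$ and $u_{lm}^-\in W_m\cap E^-$. Using \eqref{eqa:2.7}, \eqref{eqa:2.a}, \eqref{eqa:2.b} and the embedding \eqref{eqa:2.5}, the same computation that produced \eqref{eqa:3.2}--\eqref{eqa:3.3} (now with the $o(1)$ terms simply absent, since the gradient is exactly zero) gives
\begin{equation*}
\|u_{lm}^+\|_E\le C\|u_{lm}\|_E^p,\qquad \|u_{lm}^-\|_E\le C\|u_{lm}\|_E^p,
\end{equation*}
with $C$ independent of $m$. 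Adding the squares and using $\|u_{lm}\|_E^2=\|u_{lm}^+\|_E^2+\|u_{lm}^-\|_E^2$ yields $\|u_{lm}\|_E^2\le 2C^2\|u_{lm}\|_E^{2p}$; since $0<p<1$ this forces $\|u_{lm}\|_E\le(2C^2)^{1/(2(1-p))}=:M_0$, a constant depending only on $p$ and the embedding constants, hence independent of both $m$ and (after taking the max with the $l=1$ bound, or simply noting $C$ does not depend on $l$ either) of $l$.

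An essentially equivalent route, which I would mention as an alternative in case one prefers to keep the dependence on $c_{lm}$ explicit, is to combine the critical-point identity $\langle\Phi'(u_{lm}),u_{lm}\rangle=0$, i.e. $\langle(L-\mu)u_{lm},u_{lm}\rangle=\int_\Omega|u_{lm}|^{p+1}\rho\,dt\,dx$, with the value $\Phi_m(u_{lm})=c_{lm}\le\varrho_l$. Writing $\Phi(u_{lm})=-\tfrac12\langle(L-\mu)u_{lm},u_{lm}\rangle+\tfrac1{p+1}\int_\Omega|u_{lm}|^{p+1}\rho\,dt\,dx$ and substituting the constraint gives $\bigl(\tfrac1{p+1}-\tfrac12\bigr)\int_\Omega|u_{lm}|^{p+1}\rho\,dt\,dx=c_{lm}\le\varrho_l$, so $\|u_{lm}\|_{L^{p+1}(\Omega)}$ is bounded uniformly in $m$; feeding this back into $\|u_{lm}^\pm\|_E^2\le\|u_{lm}\|_{L^{p+1}(\Omega)}^p\|u_{lm}^\pm\|_{L^{p+1}(\Omega)}\le C\|u_{lm}\|_{L^{p+1}(\Omega)}^p\|u_{lm}^\pm\|_E$ again bounds $\|u_{lm}\|_E$.

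I do not expect a serious obstacle here; the only point requiring a little care is the bookkeeping of which constants depend on what. In particular one must check that the embedding constant $C$ in \eqref{eqa:2.5} and the splitting $E^m=(W_m\cap E^-)\oplus E^+$ produce estimates whose constants are genuinely independent of $m$ — this is clear because \eqref{eqa:2.5} is a fixed inequality on all of $E$ and the decomposition is orthogonal with respect to both $\langle\cdot,\cdot\rangle$ and $\|\cdot\|_E$, so restricting to $E^m$ changes nothing. One should also note explicitly that $M_0$ may be taken independent of $l$ (which will be needed in the subsequent limiting argument as $m\to\infty$ for each fixed $l$, and then possibly across $l$); this follows since neither $C$ nor the exponent $1/(2(1-p))$ involves $l$. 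With these remarks the lemma follows exactly as in the $(PS)$ argument of Lemma~\ref{lem:3.1}, now with the error terms set to zero.
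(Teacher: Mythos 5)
Your proposal is correct, and your primary argument is a (mild but genuine) variant of the paper's. The paper proceeds via your ``alternative route'': it first uses the level bound $\Phi_m(u_{lm})=c_{lm}\le\varrho_l$ from Lemma~\ref{lem:3.a} together with $\langle\Phi_m'(u_{lm}),u_{lm}\rangle=0$ to get $\bigl(\tfrac1{p+1}-\tfrac12\bigr)\int_\Omega|u_{lm}|^{p+1}\rho\,\mathrm{d}t\,\mathrm{d}x\le\varrho_l$, hence a uniform $L^{p+1}$ bound, and then tests with $u_{lm}^{\pm}$ (both admissible since $E^+\subset E^m$ and $W_m\cap E^-\subset E^m$) to bound $\|u_{lm}^{\pm}\|_E$. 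Your main route skips the level bound entirely: testing with $u_{lm}^{\pm}$ and using only \eqref{eqa:2.5} gives $\|u_{lm}^{\pm}\|_E\le C\|u_{lm}\|_E^{p}$, and sublinearity $0<p<1$ closes the estimate, exactly as in the $(PS)$ argument of Lemma~\ref{lem:3.1} with the error terms removed. This buys you a bound whose constant is independent of $l$ as well as $m$ (the paper's constant inherits a dependence on $\varrho_l$, which is harmless since $l$ is fixed in the lemma and $\varrho_l\to0$), at the cost of not recording the uniform $L^{p+1}$ bound \eqref{eqa:4.a}, which the paper reuses later in Lemma~\ref{lem:4.2} and in the proof of Theorem~\ref{th:2.1}; with your route you would simply derive that bound from $\|u_{lm}\|_E\le M_0$ and \eqref{eqa:2.5}. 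Two trivial bookkeeping points: the exponent in your closing step comes out as $\|u_{lm}\|_E^2\le 2C^{2(1+p)}\|u_{lm}\|_E^{2p}$ if one tracks both applications of \eqref{eqa:2.5}, and the division by $\|u_{lm}\|_E^{2p}$ should be accompanied by the remark that the bound is trivial when $u_{lm}=0$.
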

\begin{proof}
Since $u_{lm}$ are the critical points of $\Phi_m$, we have $\Phi_m' (u_{lm}) =0$. Thus,
\begin{equation}\label{eqa:4.1}
\langle (L - \mu)u_{lm}, v \rangle = \int_\Omega |u_{lm}|^{p-1}u_{lm}v \rho\textrm{d}t \textrm{d}x, \ \ \forall v\in E^m.
\end{equation}
Taking $v=u_{lm}$ in \eqref{eqa:4.1}, by Lemma \ref{lem:3.a}, we have
\begin{equation*}
\varrho_l \geq \Phi_m (u_{lm}) = \left(\frac{1}{p+1} - \frac{1}{2}\right)\int_\Omega |u_{lm}|^{p+1} \rho\textrm{d}t \textrm{d}x.
\end{equation*}
Thus, there exists a constant $C>0$ independent of $m$ such that
\begin{equation}\label{eqa:4.a}
\|u_{lm}\|_{L^{p+1}(\Omega)} \leq C.
\end{equation}
Split $u_{lm} = u^+_{lm} + u^-_{lm}$, where $u^+_{lm}\in E^m\cap E^+$ and $u^-_{lm}\in E^m\cap E^-$. Noting $\|u^+_{lm}\|^2_{E}=\langle (L - \mu)u^+_{lm}, u^+_{lm} \rangle$ and taking $v=u_{lm}$ in \eqref{eqa:4.1}, by \eqref{eqa:2.5} and \eqref{eqa:4.a}, we obtain
$$\|u^+_{lm}\|^2_{E}=\langle (L - \mu)u_{lm}, u^+_{lm} \rangle = \int_\Omega |u_{lm}|^{p-1}u_{lm}u^+_{lm} \rho\textrm{d}t \textrm{d}x \leq C \|u^+_{lm}\|_{E}.$$
Therefore, we obtain  $\|u^+_{lm}\|_{E} \leq C$. Similarly, $\|u^-_{lm}\|_{E} \leq C$. Consequently, there exists a constant $M_0 >0$ independent of $m$ such that $\|u_{lm}\|_{E} \leq M_0$. The proof is completed.
\end{proof}

Since $E$ is a Hilbert space, by Lemma {\upshape\ref{lem:4.1}}, we have $u_{lm} \rightharpoonup u_l$ in $E$ along with a subsequence as $m \rightarrow \infty$ for some $u_l \in E$. The following lemma shows that we can extract a subsequence of $\{u_{lm}\}$ which converges strongly to $u_l \in E$.

\begin{lemma}\label{lem:4.2}
$u_{lm} \rightarrow u_l$ in $E$ along with a subsequence as $m \rightarrow \infty$ for some $u_l \in E$.
\end{lemma}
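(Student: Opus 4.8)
Here is the plan. By Lemma~\ref{lem:4.1} the sequence $\{u_{lm}\}$ is bounded in the Hilbert space $E$, so along a subsequence $u_{lm}\rightharpoonup u_l$ weakly in $E$ for some $u_l\in E$; the goal is to upgrade this to strong convergence. The plan is to split $E$ into the four subspaces that are mutually orthogonal for the inner product $\langle\cdot,\cdot\rangle_E$ associated with $\|\cdot\|_E$, namely
$$E=(E^+\ominus E_0)\oplus(E^+\cap E_0)\oplus(E^-\ominus E_0)\oplus(E^-\cap E_0),$$
and to write $u_{lm}=v_{lm}+y_{lm}+w_{lm}+z_{lm}$, $u_l=v_l+y_l+w_l+z_l$ for the corresponding orthogonal decompositions; since the four projections are bounded, each summand of $u_{lm}$ converges weakly in $E$ to the matching summand of $u_l$. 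By Remark~\ref{rem:2.1}, $E^+\cap E_0$ is finite dimensional, so $y_{lm}\to y_l$ in $E$, and by the compact embedding $E\ominus E_0\hookrightarrow L^{p+1}(\Omega)$ of Proposition~\ref{pp:2.1}, after a further subsequence $v_{lm}\to v_l$ and $w_{lm}\to w_l$ strongly in $L^{p+1}(\Omega)$. Setting $g_{lm}=v_{lm}+y_{lm}+w_{lm}$ and $g_l=v_l+y_l+w_l$, one then has $g_{lm}\to g_l$ in $L^{p+1}(\Omega)$, while $u_{lm}=g_{lm}+z_{lm}$.

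The components $v_{lm}$ and $w_{lm}$ would then be upgraded to strong $E$-convergence using $\Phi_m'(u_{lm})=0$. Since $E^+\subset E^m$, we have $v_{lm}-v_l\in E^m$; inserting it into $\langle\Phi_m'(u_{lm}),\,\cdot\,\rangle=0$, using \eqref{eqa:2.b} and the $\langle\cdot,\cdot\rangle_E$-orthogonality of $v_{lm}-v_l\in E^+\ominus E_0$ with $y_{lm}\in E^+\cap E_0$, gives
$$\|v_{lm}\|_E^2-\langle v_{lm},v_l\rangle_E=\int_\Omega f(u_{lm})(v_{lm}-v_l)\rho\,\textrm{d}t\,\textrm{d}x .$$
By H\"older's inequality the right-hand side is bounded by $\|u_{lm}\|_{L^{p+1}(\Omega)}^{p}\,\|v_{lm}-v_l\|_{L^{p+1}(\Omega)}$, which tends to $0$ since $\|u_{lm}\|_{L^{p+1}(\Omega)}$ is bounded by Lemma~\ref{lem:4.1} and $v_{lm}-v_l\to0$ in $L^{p+1}(\Omega)$; as $\langle v_{lm},v_l\rangle_E\to\|v_l\|_E^2$ by weak convergence, this forces $\|v_{lm}\|_E\to\|v_l\|_E$, hence $v_{lm}\to v_l$ in $E$. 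Repeating the argument with the test function $w_{lm}-P_mw_l\in E^m\cap E^-$, where $P_m$ is the $E$-orthogonal projection onto $E^m$ (so $P_mw_l\to w_l$ in $E$ and in $L^{p+1}(\Omega)$), and using \eqref{eqa:2.a}, yields $w_{lm}\to w_l$ in $E$.

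The remaining and hardest piece is $z_{lm}\in E^-\cap E_0$, which sits in an infinite dimensional space carrying no compactness, so the weak-limit trick above breaks down; this is the main obstacle. Here I would exploit two facts. First, by Lemma~\ref{lem:2.1} the eigenvalues of $L$ on $E_0$ are $\lambda_{jk}=\theta_k(2k+\theta_k)$, which are \emph{bounded}, and $\mu-\lambda_{jk}\ge\delta>0$ by \eqref{eqa:2.3}; hence $\|\cdot\|_E$ and $\|\cdot\|_{L^2(\Omega)}$ are equivalent norms on $E^-\cap E_0$. Second, since $0<p<1$ we have $(p+1)/p>2$, so $f(u)=|u|^{p-1}u$ maps $L^{p+1}(\Omega)$ continuously and boundedly into $L^2(\Omega)$, and $f$ is monotone nondecreasing. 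Testing $\Phi_m'(u_{lm})=0$ against $v\in W_m\cap E^-\cap E_0$ and using the definition of $\langle\cdot,\cdot\rangle_E$ on $E^-$ shows
$$(\mu-L)z_{lm}=-Q^0_m f(g_{lm}+z_{lm}),$$
where $Q^0_m$ is the $L^2(\Omega)$-orthogonal projection onto $W_m\cap E^-\cap E_0$. I would then introduce the limiting problem $(\mu-L)\bar z=-Q^0 f(g_l+\bar z)$ on $E^-\cap E_0$, with $Q^0$ the projection onto $E^-\cap E_0$: the map $z\mapsto(\mu-L)z+Q^0 f(g_l+z)$ is continuous, bounded, and strongly monotone with constant $\delta$, hence has a unique zero $\bar z$. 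Comparing $z_{lm}$ with $Q^0_m\bar z$ via strong monotonicity, and using $(\mu-L)Q^0_m\bar z=Q^0_m(\mu-L)\bar z=-Q^0_m f(g_l+\bar z)$, one gets
$$\delta\,\|z_{lm}-Q^0_m\bar z\|_{L^2(\Omega)}\le\|f(g_{lm}+Q^0_m\bar z)-f(g_l+\bar z)\|_{L^2(\Omega)}\longrightarrow 0,$$
since $g_{lm}+Q^0_m\bar z\to g_l+\bar z$ in $L^{p+1}(\Omega)$ and $f$ is continuous from $L^{p+1}(\Omega)$ to $L^2(\Omega)$. As $Q^0_m\bar z\to\bar z$ in $L^2(\Omega)$ this gives $z_{lm}\to\bar z$ in $L^2(\Omega)$, hence in $E$ by the norm equivalence, and $\bar z=z_l$ by uniqueness of the weak limit. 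Adding the four components yields $u_{lm}\to u_l$ in $E$ along the chosen subsequence, which is the assertion. The delicate point throughout is this $E_0$-part: it is exactly the lack of compactness of $E\ominus E_0\hookrightarrow L^{p+1}(\Omega)$ on the resonant modes that forces one to replace the compactness argument by the bounded invertibility of $L-\mu$ on $E^-\cap E_0$ together with the monotonicity of the nonlinearity.
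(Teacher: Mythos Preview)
Your argument is correct and follows the same overall strategy as the paper: the same four-block decomposition of $E$, finite dimensionality of $E^+\cap E_0$ for the $y$-part, the compact embedding of Proposition~\ref{pp:2.1} together with the Euler--Lagrange identity for the $v$- and $w$-parts, and monotonicity of $f$ for the $z$-part.

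The one genuine difference is how you handle $z_{lm}\in E^-\cap E_0$. The paper works directly with the weak limit $z_l$: it expands $\|z_{lm}-z_l\|_E^2=-\langle(L-\mu)(z_{lm}-z_l),z_{lm}-z_l\rangle$, inserts the equation $\Phi_m'(u_{lm})=0$ tested against $z_{lm}-P_mz_l$, and then splits $\langle f(u_{lm}),z_{lm}-z_l\rangle$ into a monotone piece $\langle f(\tilde u_{lm}+z_{lm})-f(\tilde u_{lm}+z_l),z_{lm}-z_l\rangle\ge0$ plus two remainders that vanish by the already-established convergence $\tilde u_{lm}\to\tilde u_l$ and by weak convergence. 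No auxiliary limit problem is introduced. Your route instead casts the $z$-equation as $(\mu-L)z_{lm}+Q^0_mf(g_{lm}+z_{lm})=0$, invokes a Browder--Minty type existence/uniqueness for the limiting strongly monotone problem to produce $\bar z$, and then compares $z_{lm}$ with $Q^0_m\bar z$. Both arguments rest on exactly the same two facts (boundedness of $L$ on $E_0$ giving norm equivalence, and monotonicity of $f$); the paper's is a little shorter because it avoids the separate existence step, while yours makes the underlying operator structure on $E^-\cap E_0$ more transparent. Either is fine.
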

\begin{proof}
Decompose $u_{lm} = v_{lm} + y_{lm} + w_{lm} + z_{lm}$ and $u_l = v_l + y_l + w_l + z_l$,  where $v_l$, $y_l$, $w_l$, $z_l$ are the weak limits of $\{v_{lm}\}$,  $\{y_{lm}\}$, $\{w_{lm}\}$, $\{z_{lm}\}$ respectively, and $v_{lm}, v_l \in E^+ \ominus E_0$, $y_{lm}, y_l \in E^+ \cap E_0$, $w_{lm}, w_l\in E^- \ominus E_0$, $z_{lm}, z_l \in E^- \cap E_0$.

Recalling $E^m \subset E^{m+1}$ and $E=\bigcup_{m\in\mathbb{N}} E^m$, we have
$$\|(id - P_m)u\|_E \rightarrow 0, \ \ \textrm{as} \ \ m\rightarrow \infty, \ \ \forall u\in E,$$
where $id$ denotes the identity mapping and $P_m : E \rightarrow E_m$ is the natural projection. In addition, in virtue of \eqref{eqa:2.5}, we obtain
\begin{equation}\label{eqa:4.3}
\|(id - P_m)u\|_{L_{p+1}(\Omega)} \rightarrow 0, \ \ \textrm{as} \ \ m\rightarrow \infty, \ \ \forall u\in E.
\end{equation}

(i) For $v_{lm}, v_l \in E^+ \ominus E_0$, by \eqref{eqa:2.b}, we have
\begin{eqnarray}\label{eqa:4.4}
\|v_{lm}-v_l\|^2_E &=& \langle (L - \mu)( v_{lm}-v_l), v_{lm}-v_l \rangle \nonumber\\
&=& \langle (L - \mu) v_{lm}, v_{lm}-v_l \rangle -\langle (L - \mu) v_l, v_{lm}-v_l \rangle.
\end{eqnarray}
In virtue of $v_{lm} \rightharpoonup v_l$ in $E$ as $m \rightarrow \infty$ and $E \hookrightarrow L^2(\Omega)$, we have $v_{lm} \rightharpoonup v_l$ in $L^2(\Omega)$. Furthermore, with the aid of Proposition \ref{pp:2.1}, we obtain $v_{lm} \rightarrow v_l$ in $L^{p+1}(\Omega)$ as $m \rightarrow \infty$. Thus, it follows
\begin{eqnarray}\label{eqa:4.5}
\langle (L - \mu) v_l, v_{lm}-v_l \rangle \rightarrow 0, \ \ {\rm as} \ \ m \rightarrow \infty.
\end{eqnarray}

On the other hand, noting $v_{lm}, v_l \in E^+ \ominus E_0$ and $u_{lm} = v_{lm} + y_{lm} + w_{lm} + z_{lm}$, we have $u_{lm} - v_{lm}\in (E^+ \ominus E_0)^\perp$. Thus
$$\langle (L - \mu)( u_{lm}-v_{lm}), v_{lm}-v_l \rangle = 0.$$
Since $u_{lm} \in E^m$, and $(id - P_m)v_l \in (E^m)^\perp$ and $v_{lm} - P_mv_l \in E^m$, from \eqref{eqa:4.1} we have
\begin{eqnarray}\label{eqa:4.6}
&&\langle (L - \mu) v_{lm}, v_{lm}-v_l \rangle = \langle (L - \mu) u_{lm}, v_{lm}-v_l \rangle \nonumber\\
&=& \langle (L - \mu) u_{lm}, v_{lm}- P_m v_l \rangle + \langle (L - \mu) u_{lm}, P_m v_l-v_l \rangle \nonumber\\
&=& \int_\Omega |u_{lm}|^{p-1}u_{lm}(v_{lm}-P_mv_l) \rho\textrm{d}t \textrm{d}x\nonumber\\
&\leq &  \|u_{lm}\|^p_{L^{p+1}(\Omega)}\|v_{lm}-v_l\|_{L^{p+1}(\Omega)} + \|u_{lm}\|^p_{L^{p+1}(\Omega)}\|(id-P_m)v_l\|_{L^{p+1}(\Omega)}\nonumber\\
&\leq & o(1),
\end{eqnarray}
where the last inequality is acquired by \eqref{eqa:4.3} and $v_{lm} \rightarrow v_l$ in $L^{p+1}(\Omega)$.

Inserting \eqref{eqa:4.5} and \eqref{eqa:4.6} into \eqref{eqa:4.4}, we have
\begin{eqnarray}\label{eqa:4.7}
\|v_{lm}-v_l\|_E \rightarrow 0,  \ \textrm{as} \ \ m \rightarrow \infty.
\end{eqnarray}

(ii) Similarly, for  $w_{lm}, w_l\in E^- \ominus E_0$, we have
\begin{equation}\label{eqa:4.8}
\|w_{lm}-w_l\|^2_E = -\langle (L - \mu)( w_{lm}-w_l), w_{lm}-w_l \rangle  \rightarrow 0,   \ \textrm{as} \ \ m \rightarrow \infty.
\end{equation}

(iii) For $y_{lm}, y_l \in E^+ \cap E_0$, Remark \ref{rem:2.1} shows $\dim (E^+ \cap E_0) <\infty$. Thus, $y_{lm} \rightharpoonup y_l$ in $E$ implies
\begin{eqnarray}\label{eqa:4.9}
\|y_{lm}-y_l\|_E \rightarrow 0,  \ \textrm{as} \ \ m \rightarrow \infty.
\end{eqnarray}

(iv) For $z_{lm}, z_{l} \in E^- \cap E_0$, since the compact embedding $E\ominus E_0 \hookrightarrow L^q(\Omega)$ is not holding  and $\dim (E^- \cap E_0) = \infty$, then the compact argument is invalid. In what follows, by the monotone method, we will prove $z_{lm}\rightarrow z_l$ in $E$ as $m \rightarrow \infty$.

Since  $z_{lm} \rightharpoonup z$ in $E$, we have
\begin{eqnarray*}
&&\|z_{lm}-z\|^2_E = - \langle (L - \mu)( z_{lm}-z_l), z_{lm}-z_l \rangle \nonumber\\
&=& - \langle (L - \mu) u_{lm}, z_{lm}-P_mz_l \rangle + \langle (L - \mu) z_l, z_{lm}-z_l \rangle\nonumber\\
&\leq &  - \langle f(u_{lm}), (id-P_m)z_l \rangle -  \langle f(u_{lm}), z_{lm}-z_l \rangle+ o(1).
\end{eqnarray*}
In virtue of \eqref{eqa:4.a} and \eqref{eqa:4.3}, we have
$$|\langle f(u_{lm}), (id-P_m)z_l \rangle| \leq \|u_{lm}\|^p_{L^{p+1}(\Omega)}\|(id-P_m)z_l\|_{L^{p+1}(\Omega)} \rightarrow 0, \ \ \textrm{as} \ \ m\rightarrow \infty.$$

On the other hand, decompose
\begin{eqnarray*}
\langle f(u_{lm}), z_{lm}-z_l \rangle &=& \langle f(u_{lm})- f(\tilde{u}_{lm} + z_l), z_{lm}-z_l \rangle\nonumber\\
&+& \langle f(\tilde{u}_{lm} + z_l) - f(u_l), z_{lm}-z_l \rangle + \langle f(u_l), z_{lm}-z_l \rangle,
\end{eqnarray*}
where $\tilde{u}_{lm} = v_{lm} + y_{lm} + w_{lm}$.

Since $f(u)= |u|^{p-1}u$ is nondecreasing in $u$, then
\begin{equation*}
\langle f(u_{lm})- f(\tilde{u}_{lm} + z_l), z_{lm}-z_l \rangle \geq 0.
\end{equation*}

By \eqref{eqa:2.4} and  {\eqref{eqa:4.7}--\eqref{eqa:4.9}}, we have $\tilde{u}_{lm} \rightarrow \tilde{u_l}$ in $L^2(\Omega)$, where $\tilde{u_l} = v_l + y_l + w_l$. Moreover, since $f : u \mapsto |u|^{p-1}u$ is continuous from ${L^2(\Omega)}$ to ${L^{2/p}(\Omega)}$. Thus, we have
\begin{equation*}
\langle f(\tilde{u}_{lm} + z) - f(u_l), z_{lm}-z_l \rangle \rightarrow 0, \ \ {\rm as} \ \ m \rightarrow \infty.
\end{equation*}
In addition, in virtue of  $z_{lm} \rightharpoonup z_l$ in ${L^2(\Omega, \rho)}$, we have
\begin{equation*}
\langle f(u_l), z_{lm}-z_l \rangle \rightarrow 0, \ \ {\rm as} \ \ m \rightarrow \infty.
\end{equation*}

Consequently,
\begin{equation}\label{eqa:4.10}
\|z_{lm}-z_l\|^2_E \rightarrow 0,  \ \textrm{as} \ \ m \rightarrow \infty.
\end{equation}

Finally, from {\eqref{eqa:4.7}--\eqref{eqa:4.10}}, we obtain $\|u_{lm}-u_l\|_E \rightarrow 0$, as $m \rightarrow \infty$.
\end{proof}

Now, we give the proof of Theorem \ref{th:2.1}.
\begin{proof}

Since $u_{lm}$ are the critical points of  $\Phi_m$, a simple calculation yields
$$\langle (L - \mu) u_{lm}, u_{lm}-v \rangle = \langle f(u_{lm}), u_{lm}-P_mv \rangle,  \ \ \forall v\in E.$$
Thus, with the aid of monotonicity of $f(u)$, we have
\begin{eqnarray}\label{eqa:4.12}
\langle (L - \mu) u_{lm}, u_{lm}-v \rangle - \langle f(v), u_{lm}-v \rangle
&=&\langle f(u_{lm}), u_{lm}-P_mv \rangle - \langle f(v), u_{lm}-v \rangle \nonumber\\
&\geq & \langle f(u_{lm}), v -P_mv\rangle.
\end{eqnarray}

By \eqref{eqa:4.a} and \eqref{eqa:4.3}, we have
$$|\langle f(u_{lm}), v -P_mv \rangle| \leq \|u_{lm}\|^p_{L^{p+1}(\Omega)}\|(id-P_m)v\|_{L^{p+1}(\Omega)} \rightarrow 0, \ \ \textrm{as} \ \ m\rightarrow \infty.$$
Thus, noting $u_{lm} \rightarrow u_l$ in $E$ as $m\rightarrow \infty$ and passing to the limit in \eqref{eqa:4.12}, a simple calculation yields that for any $v\in E$,
\begin{eqnarray}\label{eqa:4.13}
0&=& \lim_{m\rightarrow \infty}\langle f(u_{lm}), v -P_mv\rangle\nonumber\\
&\leq & \lim_{m\rightarrow \infty} \langle (L - \mu) u_{lm}, u_{lm}-v \rangle - \lim_{m\rightarrow \infty}\langle f(v), u_{lm}-v \rangle\nonumber\\
&= & \langle (L - \mu) u_{l}, u_{l}-v \rangle - \langle f(v), u_{l}-v \rangle.
\end{eqnarray}
Taking $v= u_l -s \psi$ for $s>0$, $\psi \in E$ in \eqref{eqa:4.13}, and dividing by $s$, we obtain
$$\langle (L - \mu) u_{l}, \psi \rangle - \langle f(u_l -s \psi), \psi \rangle\geq 0.$$
Now letting $s \rightarrow 0$ in above inequality, we have
$$\langle (L - \mu) u_{l}, \psi \rangle - \langle f(u_l), \psi \rangle\geq 0.$$
Since $\psi$ is chosen arbitrarily,  then
$$\langle (L - \mu) u_{l}, \psi \rangle - \langle f(u_l), \psi \rangle = 0,$$
i.e.,
\begin{equation*}
\langle(L-\mu)u_l, \psi \rangle - \int_\Omega f(u_l)\psi \rho\textrm{d}t \textrm{d}x =0, \ \ \forall \psi\in E.
\end{equation*}

On the other hand, noting $u_{lm} \rightarrow u_l$ in $E$ as $m\rightarrow \infty$ and by \eqref{eqa:2.5}, we have
\begin{equation*}
\Phi(u_{lm}) \rightarrow \Phi(u_{l}),  \ \ \textrm{as} \ \ m\rightarrow \infty,
\end{equation*}
where $\Phi(u)$  is defined by \eqref{eqa:2.6}.
Therefore, by Lemma \ref{lem:3.a}, we obtain
$$0<\sigma_l \leq \Phi(u_{l}) \leq \varrho_l.$$
Since $\varrho_l \rightarrow 0$ as $l \rightarrow\infty$, then $\{u_{l}\}$ are infinitely many weak solutions of problem \eqref{eqa:2-a}--\eqref{eqa:2-c}. In addition, since $\Phi'(u_{l}) = 0$ and $0<p<1$, by $\varrho_l \rightarrow 0$ as $l \rightarrow\infty$, a simple calculation yields
$$\int_\Omega |u_{l}|^{p+1} \rho\textrm{d}t \textrm{d}x =\left(\frac{1}{p+1} - \frac{1}{2}\right)^{-1} \Phi(u_{l}) \rightarrow 0, \ \ \textrm{as} \ \ l \rightarrow\infty.$$
The proof is completed.
\end{proof}

\vskip 5mm

{\bf Ethics statement.} This work did not involve any active collection of human data.

\vskip 5mm

{\bf Data accessibility statement.} This work does not have any experimental data.

\vskip 5mm

{\bf Competing interests statement.} We have no competing interests.

\vskip 5mm

{\bf Authors¡¯ contributions.}
HW and SJ contributed to the mathematical proof and writing the paper. All authors gave final approval for publication.

\vskip 5mm

{\bf Funding.} This work was supported by NSFC Grants (nos. 11322105 and 11671071).





\bibliographystyle{elsarticle-num}
\bibliography{<your-bib-database>}



\section*{References}

\end{document}